\author{Matthieu Calvez}
\title{Euclidean Artin-Tits groups are acylindrically hyperbolic}
\newtheorem{theorem}{Theorem}[section]
\newtheorem{lemma}[theorem]{Lemma}
\newtheorem{proposition}[theorem]{Proposition}
\newtheorem*{TheoremA}{Theorem A}
\newtheorem*{CorollaryB}{Corollary B}
\theoremstyle{definition}
\newtheorem{definition}[theorem]{Definition}
\newtheorem{example}[theorem]{Example}
\newtheorem{remark}[theorem]{Remark}
\newtheorem{claim}[theorem]{Claim}
\newtheorem{Proposition-Definition}{Proposition-Definition}[section]
\def\Dir{\text{D{\scriptsize\scshape{IR}}}}
\def\Dim{\text{D{\scriptsize\scshape{IM}}}}
\def\Mov{\text{M{\scriptsize\scshape{OV}}}}
\def\Min{\text{M{\scriptsize\scshape{IN}}}}
\def\Isom{\text{I{\scriptsize\scshape{SOM}}}}
\def\dis{\text{\scshape{dis}}}
\begin{document}

\maketitle

\begin{abstract}
In this paper we show the statement in the title. To any Garside group of finite type, Wiest and the author associated a hyperbolic graph called the \emph{additional length graph} and they used it to show that central quotients  of Artin-Tits groups of spherical type are acylindrically hyperbolic. In general, a euclidean Artin-Tits group is not \emph{a priori} a Garside group but McCammond and Sulway have shown that it embeds into an \emph{infinite-type} Garside group which they call a {\em{crystallographic Garside group}}. We associate a \emph{hyperbolic} additional length graph to this crystallographic Garside group and we exhibit elements of the euclidean Artin-Tits group which act loxodromically and WPD on this hyperbolic graph.
\end{abstract}

\section{Introduction}
An \emph{Artin-Tits group} is a group defined by a presentation involving a finite set of generators $S$ (the \emph{standard generators}) and where all the relations are as follows: every pair $(a,b)$ of standard generators satisfies at most one balanced relation of the form: 
$$\Pi(a,b;m_{a,b})=\Pi(b,a;m_{a,b}),$$
with $m_{a,b}=m_{b,a}\geqslant 2$ and where for $j\geqslant 2$, 
$$\Pi(a,b;j)=\begin{cases} (ab)^{\frac{j}{2}} & \text{if $j$ is even,}\\
(ab)^{\frac{j-1}{2}} a & \text{if $j$ is odd.}\\
\end{cases}$$

We also write $m_{a,b}=m_{b,a}=\infty$ when $a$ and $b$ satisfy no relation. This presentation can be encoded by a \emph{Coxeter graph} $\Gamma$. The vertices of $\Gamma$ are in bijection with the set $S$. Two distinct vertices $a,b$ of $\Gamma$ are connected by an edge labeled $m_{a,b}$ if and only if either they satisfy no relation or $m_{a,b}>2$. The Artin-Tits group defined by the Coxeter graph $\Gamma$ will be denoted by $A_{\Gamma}$. The \emph{rank} of $A_{\Gamma}$ is the cardinality of $S$. 
The quotient of $A_{\Gamma}$ by the normal subgroup generated by the squares of the elements in $S$ is a \emph{Coxeter group} denoted by $W_{\Gamma}$. The group $A_{\Gamma}$ ($W_{\Gamma}$ respectively) is said to be \emph{irreducible} if $\Gamma$ is connected. 

The geometry of Artin-Tits groups has recently attracted a lot of attention and non-positive curvature features have been exhibited for many classes of Artin-Tits groups. In this paper, we focus on the acylindrical hyperbolicity, which was introduced and extensively discussed in \cite{Osin}. The class of acylindrically hyperbolic groups is both sufficiently large to include a significative number of interesting groups and restrictive enough to deduce many interesting consequences.
An isometric action of a group $G$ on a metric space $(X,d_X)$ is \emph{acylindrical} if 
for every $\epsilon>0$, there exist $R,N>0$ such that whenever two points $x,y\in X$ are at a distance at least $R$ apart, then
$$\#\{g\in G,\  d_X(x,g\cdot x)\leqslant \epsilon, d_{X}(y,g\cdot y)\leqslant \epsilon\}\leqslant N.$$
A group is \emph{acylindrically hyperbolic} if it is not virtually cyclic and admits an acylindrical isometric action on a hyperbolic metric space with unbounded orbits.

It is conjectured that the central quotient of every irreducible Artin-Tits group of rank at least~2 is acylindrically hyperbolic \cite[Conjecture B]{Haettel}. Artin-Tits groups of rank 2 are called \emph{dihedral} Artin-Tits groups and (when irreducible) their central quotients are virtually free hence acylindrically hyperbolic \cite{Haettel}. 
Here is a brief overview of some classes of Artin-Tits groups for which the conjecture has been proved. 

\begin{itemize}
\item Artin's braid group, seen as the Mapping Class Group of the punctured disk \cite{Bowditch}.
\item Artin-Tits groups of spherical type (the Coxeter group is finite) \cite{CalvezWiestAH}. 
\item Right-Angled Artin-Tits groups ($m_{a,b}\in\{2,\infty\}$ for any standard generators $a\neq b$)~\cite{KimKoberda}.
\item Artin-Tits groups of extra extra large type (${m_{a,b}\geqslant 5}$ for any standard generators $a\neq b$)~\cite{Haettel}.
\item 2-dimensional Artin-Tits groups of hyperbolic type (for each triple of distinct standard generators $a,b,c$, we have $\frac{1}{m_{a,b}}+\frac{1}{m_{b,c}}+\frac{1}{m_{a,c}}\leqslant1$, and the associated Coxeter group is hyperbolic)~\cite{MartinPrz}.
\item Artin-Tits groups for which there is no partition $S=S_1\sqcup S_2$ of the set of standard generators satisfying
$m_{a,b}<\infty$ for all $a\in S_1, b\in S_2$
  \cite{CharneyMorris}.
\item 2-dimensional Artin-Tits groups \cite{Vaskou}. 
\end{itemize}

In this paper, we focus on the class of \emph{euclidean Artin-Tits groups}. The theory of arbitrary Coxeter groups and Artin-Tits groups stems from the study of discrete groups generated by reflections which act geometrically on spheres (finite Coxeter groups) and euclidean spaces (euclidean Coxeter groups). Finite and euclidean Coxeter groups are central in Lie theory and they were studied much longer before Tits gave the general definition  \cite{Tits} of arbitrary Coxeter and Artin-Tits groups. An Artin-Tits group has \emph{spherical type}, or \emph{euclidean type}, respectively, if the associated Coxeter group is finite, or euclidean, respectively. 

There is a well-known classification of connected Coxeter graphs ((extended) Dynkin diagrams) defining irreducible finite and euclidean Coxeter groups. Artin-Tits groups of spherical type have long been well-understood thanks to their Garside structure \cite{Deligne,BrieskornSaito}. By contrast, Artin-Tits groups of euclidean type remained mostly mysterious (with some exceptions --see \cite{Squier,DigneA,DigneC}) for decades, until their structure was elucidated by McCammond and Sulway~\cite{McCS} around 2015. In this paper we prove the above conjecture for irreducible Artin-Tits groups of euclidean type (note that these groups are centerless, by \cite[Proposition 11.9]{McCS}):
\begin{TheoremA} 
Let $A$ be an irreducible Artin-Tits group of euclidean type. Then $A$ is acylindrically hyperbolic.
\end{TheoremA}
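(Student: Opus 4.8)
The plan is to follow the blueprint established by Calvez--Wiest for spherical-type Artin-Tits groups, transplanted to the \emph{infinite-type} Garside setting provided by McCammond--Sulway. First I would recall their embedding: an irreducible euclidean Artin-Tits group $A$ embeds into a crystallographic Garside group $G$, which is a Garside group of \emph{infinite} type (infinitely many simple elements) but still possessing a Garside element $\Delta$ and the lattice structure needed to define normal forms. The first substantive step is to define the \emph{additional length graph} $\mathcal{C}_{A\!L}(G)$ for this $G$: vertices are the elements of $G$, and edges connect $g$ to $gs$ where $s$ is a simple element, with an extra penalty built from the ``additional length'' (the defect of the infimum/supremum of the product compared to adding them). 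One then must verify that the proof of hyperbolicity of $\mathcal{C}_{A\!L}$ from the spherical case goes through; the key point is that hyperbolicity there relied on combinatorial features of the normal form (convexity of the set of simple elements, the behaviour of $\inf$ and $\sup$ under multiplication) that survive in the infinite-type case, since McCammond--Sulway's crystallographic Garside structure retains a lattice of simples even though that lattice is infinite.

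Second, I would exhibit an explicit element $g_0 \in A \subset G$ which acts loxodromically on $\mathcal{C}_{A\!L}(G)$. The natural candidate is a suitably chosen ``rigid pseudo-Anosov-like'' element: concretely, an element whose powers have additional length growing linearly, i.e. whose translation length in $\mathcal{C}_{A\!L}$ is positive. In the spherical-type work, rigidity of the element (its normal form cycles without changing infimum/supremum under cycling and the canonical length stays bounded while additional length grows) was the mechanism; I would seek an analogous rigid element inside $A$, using that $A$ is a subgroup of $G$ and that McCammond--Sulway's structure lets us compute normal forms there. One should check that $g_0$ does not lie in (a conjugate of) a ``small'' parabolic-like subgroup so that it genuinely has unbounded orbit.

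Third — and this is the heart of the matter — I would prove that $g_0$ satisfies the \emph{WPD} (weakly properly discontinuous) condition with respect to the action of $A$ (not of $G$!) on $\mathcal{C}_{A\!L}(G)$. By Osin's criterion, a non-virtually-cyclic group admitting a loxodromic WPD element for some action on a (possibly non-proper) hyperbolic space is acylindrically hyperbolic, so this suffices and $A$ is non-virtually-cyclic since it has rank $\geqslant 2$ and surjects onto a non-virtually-cyclic euclidean Coxeter group. The WPD verification amounts to showing: for each $\epsilon$, only finitely many elements of $A$ move both a basepoint and a far-away point on the axis of $g_0$ by at most $\epsilon$ in $\mathcal{C}_{A\!L}$. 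I expect the main obstacle to be precisely this step, for two reasons: (i) the graph $\mathcal{C}_{A\!L}(G)$ is built from $G$, which is strictly larger than $A$, so one must control how elements of $A$ interact with $G$-simples — in particular one needs that an element of $A$ quasi-fixing a long axis segment must have bounded normal form and then, via the explicit geometry of the crystallographic Garside structure and properties of $A$ inside $G$, must lie in a finite set; (ii) the infinite-type nature means the usual ``finitely many simple elements'' finiteness inputs are unavailable, so the finiteness in WPD must instead be extracted from the structure of $A$ itself (e.g. from the fact that $A$ acts on the Garside normal forms with suitable discreteness, or from McCammond--Sulway's description of $A$ as an appropriate subgroup). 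I would handle this by first reducing to controlling stabilizers of pairs of vertices at large $\mathcal{C}_{A\!L}$-distance, then using the ``additional length'' bookkeeping to force such stabilizing elements to have a common conjugate with bounded canonical length, and finally invoking properties of $A \hookrightarrow G$ to conclude finiteness. Once WPD is established, Theorem~A follows immediately from Osin's theorem.
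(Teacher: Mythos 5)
You follow the same overall blueprint as the paper: embed $A$ into McCammond--Sulway's crystallographic Garside group $\mathfrak C$, observe that the Calvez--Wiest additional length graph $\mathcal C_{AL}(\mathfrak C)$ and the proof of its hyperbolicity carry over to this infinite-type setting, exhibit a loxodromic WPD element of $A$, and conclude by Osin's criterion. (Two corrections to your description: the vertices of $\mathcal C_{AL}$ are the cosets $g\Delta^{\mathbb Z}$, not the elements of $\mathfrak C$, and its edges come both from proper simple elements and from \emph{absorbable} elements; moreover the paper proves the WPD finiteness over all of $\mathfrak C$, not just over $A$, which is why it also yields acylindrical hyperbolicity of $\mathfrak C$ itself.)

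The genuine gap is in your third step, exactly where you predicted the difficulty. Your plan is to show that an element of $A$ quasi-fixing a long axis segment has bounded normal form and then to extract finiteness ``from the structure of $A$ inside $\mathfrak C$.'' In the infinite-type setting this does not suffice as stated: there are infinitely many simple elements, hence infinitely many elements of $\mathfrak C$ (and of $A$) with bounded canonical length and bounded infimum and supremum, and no discreteness property of the embedding $A\hookrightarrow\mathfrak C$ is available to convert such a bound into finiteness. The paper's resolution is different and concrete: it constructs a specific \emph{rigid} element $x=r_0\,\iota'_b\,w_0\,\iota'_g\,r_0$, whose five-factor normal form is arranged (via the bipartite Coxeter element, the vertical reflection $r_0$ of Lemma \ref{L:Loxod4}, and the non-absorbability of $x$ and $\partial(x)$, proved with the weight function $\rho$) so that, by the projection $\Lambda$ and Proposition \ref{P:Contract}, the preferred path between any two far-apart vertices contains a long subpath of the axis $\{X^k\}$. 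Any $g$ moving both $\ast$ and $X^N$ a bounded $d_{AL}$-distance must therefore translate this axis; rigidity rules out $g$ carrying a power of $x$ to a vertex in the interior of a normal-form block, and the resulting equalities $g\cdot X^a=X^b$, $g\cdot X^{a+1}=X^{b+1}$ force $g=x^{b-a}\Delta^l$ with $\Delta^l$ commuting with $x$, hence $l=0$ because no nontrivial power of $\Delta$ commutes with $x$. Finiteness then follows from the coarse Lipschitz property of $\Lambda$, which bounds $|b-a|$, so the relevant set lies in $\{x^l:\ |l|\leqslant 2\xi\}$. In short, the WPD finiteness is not extracted from any finiteness in the Garside structure nor from $A$ alone, but from showing that the only candidates are powers of the chosen rigid element; without this (or an equivalent substitute) your WPD step does not close, and likewise the explicit construction of the rigid loxodromic element, which occupies Section \ref{S:Loxodromic} of the paper, is left unspecified in your proposal.
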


To establish that a given group is acylindrically hyperbolic, it is often simpler to use an equivalent characterization due to Osin \cite[Theorem 1.2]{Osin}: a non-virtually cyclic group $G$ is acylindrically hyperbolic if and only if it acts by isometries on a hyperbolic metric space and \emph{some} element acts in a loxodromic Weakly Properly Discontinuous (WPD) fashion --see \cite{BestvinaFujiwara}. An element $g\in G$ acts \emph{loxodromically} on the metric space $(X,d_X)$ if for some (any) $x\in X$ there is some $\kappa>0$ so that for all $k\in \mathbb Z$,
$$d_X( x,g^k\cdot x)\geqslant |k|\kappa$$ and \emph{Weakly Properly Discontinuously} (WPD) if for all $x\in X$ and for all $\epsilon>0$, there exists $N>0$ such that the set 
$$\{h\in G,\ d_X(x,h\cdot x)\leqslant\epsilon, d_X(g^N\cdot x, hg^N\cdot x)\leqslant \epsilon\}$$ 
is finite. 

Consider first the case of the \emph{affine braid group}, or Artin-Tits group of type $\widetilde A_n$. It is known that it embeds into the central quotient of Artin's braid group \cite{KentPeifer}, which has an acylindrical action on the curve graph of the punctured disk \cite{Bowditch}. This curve graph is hyperbolic and each pseudo-Anosov braid acts on it in a loxodromic way \cite{MasurMinsky}. Because the action is acylindrical, it follows that each pseudo-Anosov braid acts on the curve graph in a WPD manner. It is not difficult to find such a pseudo-Anosov braid in the image of the affine braid group; therefore, as the affine braid group is not virtually cyclic, the above mentioned result by Osin applies to show that the affine braid group is acylindrically hyperbolic. 
 
Our proof for a general irreducible Artin-Tits group of euclidean type closely follows the construction of Wiest and the author to show that central quotients of Artin-Tits groups of spherical type are acylindrically hyperbolic \cite{CalvezWiestAH}. Let us recall the strategy. They first constructed from any finite-type Garside group $G$ a \emph{hyperbolic} graph $\mathcal C_{AL}(G)$ called the \emph{additional length graph} on which the group $G/ZG$ acts isometrically, see \cite{CalvezWiestCurve}. When $G$ is an Artin-Tits group of spherical type, they exhibit in~\cite{CalvezWiestAH} some element $x_G$ of $G/ZG$ whose action on this hyperbolic graph is loxodromic and WPD. Acylindrical hyperbolicity of $G/ZG$ then follows by the above-mentioned theorem of Osin. 

On another hand, McCammond and Sulway have established that each irreducible Artin-Tits group of euclidean type $A$ embeds in a so-called \emph{crystallographic group} $\mathfrak C$ with a Garside structure of \emph{infinite-type} \cite{McCS}; such a group is sometimes also called a \emph{quasi-Garside group}. It turns out that the construction and the proof of the hyperbolicity of the additional length graph given in \cite{CalvezWiestCurve} adapt immediately in this more general context and we obtain again a hyperbolic graph $\mathcal C_{AL}(\mathfrak C)$ with an isometric action of $\mathfrak C$. 
 Then, we construct elements of $A<\mathfrak C$ which act loxodromically and in a WPD fashion on this additional length graph. 
We conclude in the same way, using Osin's characterization of acylindrically hyperbolic groups.
 
 The paper is organized as follows. In Section \ref{S:Garside}, we give the suitable definition of a Garside group, we recall the construction of the additional length graph and its hyperbolicity. In Section \ref{S:Euclidean}, we recall a number of facts on euclidean Coxeter groups used in the sequel. In Section~\ref{S:Loxodromic}, we construct the desired loxodromic elements.
 In Section \ref{S:Proofs}, we prove Theorem A.

\section{Garside structure and the additional length graph}\label{S:Garside}

The reader is referred to \cite{DDGKM} for a detailed account on Garside theory; the unpublished text \cite{McCGarside} can also be useful. 

\begin{definition}[Garside monoid]\label{D:Garside}

A monoid $M$ is a \emph{Garside monoid} if it satisfies the following conditions: 

\begin{itemize}
\item[(1)] $M$ is left and right cancellative, that is, for all $a,b,c\in M$, either of the conditions $ab=ac$ or $ba=ca$ implies $b=c$. 
\item[(2)] There exists a map $\rho: M \longrightarrow \mathbb N\cup \{0\}$ satisfying $\rho(ab)\geqslant\rho(a)+\rho(b)$ for all $a,b\in M$ and $\rho(a)=0$ if and only if $a=1$. 

\item[(3)] Both relations in $M$
\begin{itemize}
\item $a\preccurlyeq b$ if and only if there is $c\in M$ so that $b=ac$ ($a$ is a \emph{prefix} or \emph{left divisor} of $b$ or $b$ is a \emph{right multiple} of $a$) 
\item $a\succcurlyeq b$ if and only if there is $c\in M$ so that $a=cb$ ($b$ is a \emph{suffix} or \emph{right divisor} of $a$ or $a$ is a \emph{left multiple} of $b$)
\end{itemize}
are lattice orders on $M$. 
\item[(4)] There is an element $\Delta\in M$, called  the \emph{Garside element}, such that the left and right divisors of $\Delta$ are the same and generate  $M$. These elements are called \emph{simple elements}. A simple element is \emph{proper} if it is distinct from 1 and $\Delta$.
\end{itemize}
\end{definition}

\begin{remark}\label{R:Garside}
(i) In the usual definition of a Garside monoid, the set of simple elements is assumed to be finite; a Garside monoid with a finite number of simple elements is said to be of \emph{finite type}. In absence of this condition, the monoid $M$ is also sometimes called a \emph{quasi}-Garside monoid --see \cite[Definitions 2.1 and 2.2]{DDGKM}. 

(ii) For some of the Garside monoids considered in this paper, it will be convenient to modify slightly the condition (2), 
so that the function $\rho$ has values in $\mathbb N\cup \frac{2}{3}\mathbb N\cup\{0\}$ instead of $\mathbb N\cup\{0\}$. 

(iii) An element $a\in M$ is an \emph{atom} if $a$ is indecomposable, that is if the condition $a=bc$, with $b,c\in M$ implies $b=1$ or $c=1$. The set of atoms generates $M$.
\end{remark}

\begin{definition}[gcds, lcms, complements and weightedness]\label{D:Complements}
The \emph{left/right greatest common divisor} of $a,b\in M$ is denoted by $a\wedge b / a\wedge ^{\Lsh} b$; the right/left least common multiple of $a,b$ is denoted by $a\vee b / a\vee ^{\Lsh} b$. Let $s$ be a simple element. Owing to Condition (1), there exists a \emph{unique} $\partial(s)\in M$ such that $s\partial(s)=\Delta$. By Condition (4), this element 
is still a simple element, called the \emph{right complement} of~$s$. Similarly, we have the \emph{left complement} $\partial^{-1}(s)$ of $s$, which is the unique simple element satisfying $\partial ^{-1}(s)s=\Delta$. Conjugation by~$\Delta$ induces a bijection of the set of simple elements which we denote by $\tau$: $\tau(s)=\partial(\partial(s))$ and $s\Delta=\Delta\tau(s)$. 
This map extends to an automorphism of $M$; when $M$ is of finite type, this automorphism has finite order but this need not be the case in our more general context. 
An ordered pair $(s,s')$ of simple elements is \emph{left-weighted} if $\partial s\wedge s'=1$. In other words, this means that no non-trivial prefix $a$ of $s'$ satisfies that $sa$ is still a simple element. 
Similarly, the ordered pair $(s,s')$ of simple elements is called \emph{right-weighted} if $\partial^{-1}(s')\wedge^{\Lsh} s=1$. 
\end{definition}

\begin{proposition}[Normal forms]
A Garside monoid $M$ embeds in its group of fractions $G$ and~$G$ is called a \emph{Garside group}; in this context, the elements of $M$ are called \emph{positive}. Each element $g$ in $G$ admits a unique decomposition $g=a^{-1}b$, where $a,b\in M$ and $a\wedge b=1$. This is called the \emph{negative-positive normal form} of $g$. 
Each element $g\in G$ also admits a unique decomposition $g=\Delta^p s_1\ldots s_q$, called its \emph{left normal form}, where $p\in\mathbb Z$, $q\geqslant 0$, and
the $s_i$ are \emph{proper} simple elements such that $(s_i,s_{i+1})$ is left-weighted.
Similarly, each $g\in G$ admits a unique \emph{right normal form} $g=s'_q\ldots s'_1 \Delta^p$ where the $s'_i$ are \emph{proper} simple elements such that $(s'_{i+1},s'_{i})$ is right-weighted.
The integers $p$, $q$ and $q+p$ are respectively called the \emph{infimum}, the \emph{canonical length} and the \emph{supremum} of $g$ and we denote $p=\inf(g)$, $q=\ell(g)$ and $p+q=\sup(g)$. 
\end{proposition}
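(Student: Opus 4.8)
The plan is to prove the four assertions in turn --- the embedding, the negative-positive normal form, the left normal form, and the right normal form --- by the standard Garside-theoretic arguments, checking along the way that none of them uses the finiteness of the set of simple elements. For the embedding I would invoke Ore's theorem: $M$ is cancellative by Condition~(1), and since $\succcurlyeq$ is a lattice order by Condition~(3) any two elements $b,c\in M$ admit a common left multiple $b\vee^{\Lsh}c$, so $M$ embeds into its group of left fractions $G=\{a^{-1}b\ :\ a,b\in M\}$, products of fractions being computed by clearing denominators through common left multiples. Note that $M$ has no non-trivial invertible element: if $uv=1$ then Condition~(2) gives $\rho(u)=\rho(v)=0$, hence $u=v=1$; this fact will be used twice below.

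For the negative-positive normal form, existence is obtained by starting from any expression $g=u^{-1}v$ with $u,v\in M$, setting $d=u\wedge v$ and factoring $u=d\bar u$, $v=d\bar v$: left cancellation yields $g=\bar u^{-1}\bar v$ with $\bar u\wedge\bar v=1$. For uniqueness, suppose $a^{-1}b=(a')^{-1}b'$ with $a\wedge b=a'\wedge b'=1$, and set $t=a'a^{-1}=b'b^{-1}\in G$, so that $ta=a'\in M$ and $tb=b'\in M$. Writing $t=c^{-1}e$ with $c,e\in M$ and $c\wedge e=1$, positivity of $ta$ and $tb$ forces $c\preccurlyeq ea$ and $c\preccurlyeq eb$, hence $c\preccurlyeq ea\wedge eb=e(a\wedge b)=e$, where the identity $ex\wedge ey=e(x\wedge y)$ follows at once from left cancellation; since $c\wedge e=1$ this gives $c=1$, i.e. $t\in M$. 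The symmetric argument applied to $t^{-1}=a(a')^{-1}=b(b')^{-1}$ shows $t^{-1}\in M$, so $t$ is invertible in $M$ and therefore $t=1$, whence $a=a'$ and $b=b'$.

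The left normal form is produced by the greedy algorithm. For a positive element $b\neq 1$, the largest simple left divisor of $b$ is $s_1:=b\wedge\Delta$ (it is simple as a left divisor of $\Delta$, and largest by the lattice property), and $s_1\neq 1$ because some atom left-divides $b$ and every atom is simple; writing $b=s_1b'$, Condition~(2) gives $\rho(b')<\rho(b)$, so induction on $\rho$ yields $b=s_1\cdots s_n$ with the $s_i$ non-trivial simple and every pair $(s_i,s_{i+1})$ left-weighted --- left-weightedness being precisely the maximality of each greedy choice. Using the standard lemmas on left-weighted sequences (if $s'=\Delta$ in a left-weighted pair $(s,s')$ then $\partial s=1$ and $s=\Delta$; and the first letter of a left-weighted word is its maximal simple prefix), all occurrences of $\Delta$ sit at the front, so $b=\Delta^{p}s_{p+1}\cdots s_n$ with $s_{p+1},\dots,s_n$ proper and the word uniquely determined. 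For a general $g\in G$, write $g=a^{-1}b$; as the simple elements generate $M$, $a$ left-divides some power $\Delta^{m}$, and using $w\Delta=\Delta\tau(w)$ one rewrites $g\Delta^{m}$ as a positive element, takes its left normal form, and conjugates the trailing $\Delta^{-m}$ back to the front through $\tau$; this gives $g=\Delta^{p}s_1\cdots s_q$ with the $s_i$ proper and $(s_i,s_{i+1})$ left-weighted. Uniqueness follows because $p=\max\{k\in\mathbb Z\ :\ \Delta^{-k}g\in M\}$ depends only on $g$, and the positive element $\Delta^{-p}g$ then has a unique left normal form. Finally, the \emph{right} normal form is obtained by the mirror-image argument everywhere, replacing $\preccurlyeq,\wedge,\vee,\partial$ and ``prefix'' by $\succcurlyeq,\wedge^{\Lsh},\vee^{\Lsh},\partial^{-1}$ and ``suffix''; once both normal forms are available, $\inf(g)$, $\ell(g)$ and $\sup(g)$ are well defined.

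I expect the only genuinely delicate points to be the two uniqueness statements: both rely on the compatibility of greatest common divisors with one-sided multiplication, and --- this is what stands in for the finiteness of the set of simple elements in the classical setting --- on Condition~(2) ruling out non-trivial invertible elements of $M$. The remaining ingredients (the lemmas on left-weighted sequences, and the reduction of an element of $G$ to a power of $\Delta$ times a positive element) go through verbatim in the quasi-Garside case, which is the point of stating the proposition in this generality.
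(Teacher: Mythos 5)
Your proposal is correct and follows exactly the standard Garside-theoretic route that the paper itself relies on: the proposition is stated there without proof, as background deferred to the literature (\cite{DDGKM}, \cite{CalvezWiestCurve}), precisely because the classical arguments -- Ore embedding via cancellativity and left lcms, gcd-cancellation for the negative-positive form, and the greedy/left-weighted normal form with uniqueness via the maximal simple prefix -- carry over verbatim when the set of simple elements is infinite. Your identification of the only potentially delicate points (compatibility of gcds with left multiplication, absence of non-trivial invertibles forced by the weight function $\rho$, and the standard left-weightedness lemmas, none of which uses finiteness) matches the paper's implicit justification, so there is nothing to correct.
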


The following statements can all be found in \cite{CalvezWiestCurve} for \emph{finite-type} Garside groups. However, they extend immediately to our more general context.

\begin{definition}[Absorbable] See \cite[Definition 1]{CalvezWiestCurve}.
Let $G$ be a Garside group. 
An element $g$ of~$G$ is said to be \emph{absorbable} if the two following conditions are satisfied: 
\begin{itemize}
\item $\inf(g)=0$ or $\sup(g)=0$,
\item there exists some $h\in G$ such that $$\begin{cases} \inf(hg)=\inf(h) \ \ \text{and}\\ \sup(hg)=\sup(h).\end{cases}$$
\end{itemize}
\end{definition}

The following lemma is a useful technical fact about absorbable elements. 
\begin{lemma}[Subwords of absorbable elements]\label{L:AbsorbableSubword} {\rm See \cite[Lemma 2]{CalvezWiestCurve}.}
Let $G$ be a Garside group. Suppose that an absorbable element~$g\in G$ factors as a product of positive (possibly trivial) elements: $g=g_1g_2g_3$. Then $g_2$ is absorbable. 
\end{lemma}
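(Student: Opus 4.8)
The plan is to reduce the statement to the definition of absorbability, handling the two symmetric cases $\sup(g)=0$ and $\inf(g)=0$. By the symmetry $g \mapsto g^{-1}$ (which swaps $\inf$ and $\sup$, prefixes and suffixes, and reverses products), it suffices to treat the case $\sup(g)=0$; so assume $g$ is positive-free in the sense that $\sup(g)=0$, i.e. $g=\Delta^{\inf(g)}s_1\cdots s_q$ with $\inf(g)+q=0$. Since $g=g_1g_2g_3$ with each $g_i$ positive, each $g_i$ has $\inf(g_i)\geqslant 0$; I would first record that $\sup(g)=0$ forces $\sup(g_2)=0$ as well — if $g_2$ had a simple prefix, that prefix would survive (after suitable sliding of $\Delta$'s) as a prefix of $g$ contributing to $\sup(g)$, contradicting $\sup(g)=0$. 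Concretely, $\sup$ is subadditive in the sense relevant here: positivity of $g_1,g_3$ gives $\sup(g)\geqslant \sup(g_2) - \text{(something controlled)}$; the cleanest route is to observe $g_1 g_2 \preccurlyeq g_1 g_2 g_3 \cdot(\text{something})$... more carefully, $\inf$ and $\sup$ behave monotonically under right/left multiplication by positive elements, so $\sup(g_2)\leqslant \sup(g_1 g_2 g_3)=0$ after accounting for the $g_1$ on the left, which shifts by $\inf(g_1)\geqslant 0$; chasing the inequalities gives $\sup(g_2)=0$. So the first bullet of the definition of "absorbable" holds for $g_2$.

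Next I would produce the auxiliary element $h'$ witnessing absorbability of $g_2$, starting from the element $h$ witnessing absorbability of $g$: we have $\inf(hg)=\inf(h)$ and $\sup(hg)=\sup(h)$. The natural guess is $h' = h g_1$. Then $h'g_2 = h g_1 g_2$ and I need $\inf(h g_1 g_2)=\inf(h g_1)$ and $\sup(h g_1 g_2)=\sup(h g_1)$. The point is that multiplying on the right by the positive element $g_3$ can only increase $\sup$ and can only increase $\inf$ (monotonicity of $\sup,\inf$ under right multiplication by positive elements), and similarly $hg_1g_2 \preccurlyeq hg_1g_2g_3=hg$; combined with $\inf(hg)=\inf(h)\leqslant \inf(hg_1)\leqslant \inf(hg_1g_2)\leqslant \inf(hg_1g_2g_3)=\inf(hg)$, all these infima are forced to be equal, giving $\inf(h'g_2)=\inf(hg_1g_2)=\inf(hg_1)=\inf(h')$; the identical squeeze with $\sup$ in place of $\inf$ gives $\sup(h'g_2)=\sup(h')$. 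Hence $h'=hg_1$ witnesses the second bullet.

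The main obstacle, and the step deserving the most care, is justifying the monotonicity inequalities $\inf(xy)\geqslant \inf(x)$ and $\sup(xy)\geqslant \sup(x)$ for $y$ positive (equivalently, left multiplication by a positive element does not decrease $\inf$ or $\sup$), together with their left-sided analogues, in the \emph{quasi}-Garside (possibly infinite-type) setting — since these are the only facts beyond the definitions that the argument uses, and the paper has only asserted that the results of \cite{CalvezWiestCurve} "extend immediately." In the finite-type case these are standard consequences of the theory of normal forms and $\tau$; in the present setting one should check that $\Delta$ still slides past simples ($s\Delta=\Delta\tau(s)$ is in the definitions) and that the local characterization of left-weightedness suffices to run the usual argument, none of which uses finiteness of the simple set. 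I would state these monotonicity facts as a short preliminary observation (or cite it from \cite{CalvezWiestCurve} with the remark that the proof is insensitive to finiteness), and then the Lemma follows by the two squeeze arguments above. Finally I would note that the $\inf(g)=0$ case is obtained by applying the $\sup$ case to $g^{-1}=g_3^{-1}g_2^{-1}g_1^{-1}$, using that $g$ absorbable with $\inf(g)=0$ is equivalent to $g^{-1}$ absorbable with $\sup(g^{-1})=0$ (taking $h^{-1}$ as the witness), and that $g_2^{-1}$ absorbable implies $g_2$ absorbable by the same involution.
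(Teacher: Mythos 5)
Your core argument is the same as the one the paper points to (the lemma is quoted from \cite[Lemma 2]{CalvezWiestCurve}, whose proof carries over verbatim to the quasi-Garside setting): the witness for $g_2$ is $h'=hg_1$, and everything follows from the monotonicity of $\inf$ and $\sup$ under multiplication by positive elements on either side, via the squeeze $\inf(h)=\inf(hg)\geqslant\inf(hg_1g_2)\geqslant\inf(hg_1)\geqslant\inf(h)$ and its $\sup$ analogue; you are also right that these monotonicity facts use nothing about finiteness of the set of simples.

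However, your case framing is off, and as written the substantive case is only handled through an invalid reduction. Since $g=g_1g_2g_3$ is a product of positive elements, $g$ is itself positive, so the case you chose to treat, $\sup(g)=0$, forces $0\leqslant\inf(g)\leqslant\sup(g)=0$, hence $g=1$ and (using the weight function $\rho$) $g_1=g_2=g_3=1$: it is the degenerate case. The real case is $\inf(g)=0$, and your reduction of it to the $\sup$ case via $g\mapsto g^{-1}$ does not work as stated: $g^{-1}=g_3^{-1}g_2^{-1}g_1^{-1}$ is a product of \emph{negative} elements, so the hypotheses of the lemma you are invoking fail for it; moreover the witness showing $g^{-1}$ absorbable is $hg$ (since $(hg)g^{-1}=h$ and $\inf(hg)=\inf(h)$, $\sup(hg)=\sup(h)$), not $h^{-1}$ --- the definition only multiplies $h$ on the left, so nothing is assumed about $\sup(gh)$. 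The repair is immediate and makes the detour unnecessary: run your monotonicity argument directly when $\inf(g)=0$, getting $0\leqslant\inf(g_2)\leqslant\inf(g_1g_2)\leqslant\inf(g)=0$ for the first bullet, and use the same $h'=hg_1$ squeeze (which never used $\sup(g)=0$) for the second. With that adjustment your proof coincides with the cited one.
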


\begin{definition}[Additional length graph] See \cite[Definition 2]{CalvezWiestCurve}, see also \cite[Section 2.1]{Bestvina}.
Let $G$ be a Garside group. The \emph{additional length graph} associated to $G$ is the graph denoted by $\mathcal C_{AL}(G)$ defined in the following way: 

\begin{itemize}
\item The vertices are in bijection with the left cosets $g\Delta^{\mathbb Z}, g\in G$. Each vertex $V$ has a unique \emph{distinguished representative} $\underline{V}$ of infimum 0. 
We denote by $\ast$ the vertex $\Delta^{\mathbb Z}$.
\item Two vertices $V$ and $V'$ are connected by an edge if and only if one of the following holds: 
\begin{itemize}
\item there is a proper simple element $s$ such that $\underline{V}s$ belongs to the coset $V'$ (this is equivalent to saying that there is some proper simple element $s'$ so that $\underline{V'} s'$ belongs to the coset~$V$). 
\item there is an absorbable element $g$ such that $\underline{V}g\in V'$ (equivalently, there is an absorbable element $g'$ so that $\underline{V'}g'\in V$). 
\end{itemize}
\end{itemize}
The graph is endowed with the edge-metric which we denote by $d_{AL}$. There is an isometric action of $G$ by left translation on the vertices: for $g\in G$ and $V$ a vertex of $\mathcal C_{AL}(G)$, $g\cdot V=(g\underline{V})\Delta^{\mathbb Z}$.  
\end{definition}

\begin{definition}[Preferred paths] See \cite[Definition 3]{CalvezWiestCurve}, see also \cite[Section 3.1]{Bestvina}. Let $G$ be a Garside group. Consider a vertex $V$ of $\mathcal C_{AL}(G)$ and write $\underline{V}=s_1\ldots s_r$ the left normal form of its distinguished representative. 
The \emph{preferred path} $\mathcal A(\ast,V)$ is the path 
$$\ast,\ s_1\Delta^{\mathbb Z},\ \ldots \ ,\ (s_1\ldots s_r)\Delta^{\mathbb Z}=V$$
from $\ast$ to $V$.
Given any two vertices $V_1, V_2$ of $\mathcal C_{AL}(G)$, the preferred path $\mathcal A(V_1,V_2)$ is the $\underline {V_1}$ left translate of the path 
$\mathcal A(\ast, \underline {V_1}^{-1}\cdot V_2)$. 
\end{definition} 

Here is a summary of the properties enjoyed by the preferred paths. 

\begin{proposition}[Properties of preferred paths]\label{P:Preferred}
Let $G$ be a Garside group.
\begin{itemize}
\item[(i)] {\rm{\cite[Lemma 4]{CalvezWiestCurve}.}}  Let $V_1$ and $V_2$ be two vertices of $\mathcal C_{AL}(G)$. The preferred path $\mathcal A(V_1,V_2)$ is the concatenation of the paths 
$\mathcal A(V_1, (\underline{V_1}\wedge \underline{V_2}) \Delta^{\mathbb Z})$ and $\mathcal A((\underline{V_1}\wedge\underline{V_2})\Delta^{\mathbb Z},V_2)$. 
\item[(ii)] {\rm{\cite[Lemma 5]{CalvezWiestCurve}.}} The preferred paths are symmetric; that is,  for all vertices $V_1$ and $V_2$ of $\mathcal C_{AL}(G)$, $\mathcal A(V_2,V_1)$ is the reverse of $\mathcal A(V_1,V_2)$. 
\item[(iii)] {\rm{\cite[Lemma 2]{CalvezWiestAH}.}} Let $V_1,V_2$ be two vertices of $\mathcal C_{AL}(G)$; let $g\in G$. We have $\mathcal A(g\cdot V_1, g\cdot V_2)=g\cdot \mathcal A(V_1,V_2)$. 
\item[(iv)] {\rm{\cite[Lemma 7]{CalvezWiestCurve}.}} Let $V_1,V_2,V_3$ be three vertices of $\mathcal C_{AL}(G)$. The triangle in $\mathcal C_{AL}(G)$ with vertices $V_1, V_2, V_3$ and with sides $\mathcal A(V_1,V_2)$, $\mathcal A(V_2,V_3)$ and $\mathcal A(V_3, V_1)$ is 2-thin: each side is at Hausdorff distance at most 2 from the union of the other two sides.
\end{itemize}
\end{proposition}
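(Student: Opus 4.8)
All four assertions are verbatim transcriptions of results proved in \cite{CalvezWiestCurve,CalvezWiestAH} for \emph{finite-type} Garside groups, so the plan is to reproduce those arguments and to verify that finiteness of the set of simple elements is never used. Every ingredient involved --- the lattice structure of the prefix order $\preccurlyeq$, the existence and uniqueness of left and right normal forms, the calculus of the complement maps $\partial,\partial^{-1}$ and of the automorphism $\tau=\partial\circ\partial$, and the elementary facts about absorbable elements recalled in Lemma~\ref{L:AbsorbableSubword} --- remains available word for word in the quasi-Garside setting. The single feature peculiar to finite type, that $\tau$ has finite order, plays no role in any of these proofs; likewise the possibility that $\rho$ take values in $\mathbb N\cup\tfrac23\mathbb N\cup\{0\}$ is harmless here, since $d_{AL}$ is the integer-valued edge metric.

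Taking the items in turn: (iii) is obtained by unwinding the definition $\mathcal A(V_1,V_2)=\underline{V_1}\cdot\mathcal A(\ast,\underline{V_1}^{-1}\cdot V_2)$; one notes that $\underline{g\cdot V_1}$ differs from $g\,\underline{V_1}$ by a power of $\Delta$ and that $\tau$ carries simple elements to simple elements and left-weighted pairs to left-weighted pairs, whence $\mathcal A(\ast,\Delta^{j}\cdot W)=\Delta^{j}\cdot\mathcal A(\ast,W)$, so that the two powers of $\Delta$ cancel. For (i) I would first reduce, using (iii), to the case $V_1=\ast$; writing $d=\underline{V_1}\wedge\underline{V_2}$ and $\underline{V_1}=d\alpha$, $\underline{V_2}=d\beta$ with $\alpha\wedge\beta=1$, one checks --- using that $\inf(\underline{V_i})=0$ forces $\inf(d)=\inf(\alpha)=0$ --- that the vertex $d\Delta^{\mathbb Z}$ lies on the preferred path and that its two halves are the asserted preferred paths; this is bookkeeping with left normal forms together with the lattice identities. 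Assertion (ii) then follows by combining (i) with an induction on canonical length, exactly as in \cite[Lemma~5]{CalvezWiestCurve}, and (iv) is deduced from (i) by organising the three pairwise greatest common divisors of $V_1,V_2,V_3$ into a \emph{tree of gcds} and estimating distances along the resulting preferred sub-paths, as in \cite[Lemma~7]{CalvezWiestCurve}.

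The step that warrants the most attention --- and the only place I expect any friction --- is (iv): one must re-examine the proof of \cite[Lemma~7]{CalvezWiestCurve} to confirm that the case analysis on the relative positions of the three gcds, and the length estimates it relies on, nowhere appeal to there being finitely many simple elements, and that the thinness constant~$2$ comes purely from the structure of the splitting in~(i) rather than from any global diameter bound. Once this is checked, Proposition~\ref{P:Preferred} holds in the stated generality, furnishing the thin-triangle input used later for the hyperbolicity of $\mathcal C_{AL}(\mathfrak C)$.
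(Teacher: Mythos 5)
Your proposal matches the paper's own treatment: the paper offers no independent proof of Proposition~\ref{P:Preferred}, merely citing \cite{CalvezWiestCurve,CalvezWiestAH} and observing that those arguments never use finiteness of the set of simple elements (nor finite order of $\tau$), so they extend verbatim to the quasi-Garside setting --- exactly your plan. Your sketches of the underlying arguments (gcd splitting through the vertex $(\underline{V_1}\wedge\underline{V_2})\Delta^{\mathbb Z}$ for (i), $\tau$-equivariance for (iii), the thin-triangle analysis for (iv)) are consistent with the cited proofs, so there is nothing to add.
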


Finally, the main result of \cite{CalvezWiestCurve} is the following:

\begin{theorem}[{\rm{Hyperbolic}}] {\rm{See \cite[Theorem 1]{CalvezWiestCurve}.}}\label{T:CAL} Let $G$ be a Garside group.
The graph $\mathcal C_{AL}(G)$ is 60-hyperbolic and the preferred paths form a family of 
uniformly unparameterized quasi-geodesics: for all vertices $V_1,V_2$ of $\mathcal C_{AL}(G)$, the Hausdorff distance between $\mathcal A(V_1,V_2)$ and any geodesic connecting $V_1$ and $V_2$ is bounded above by 39. 
\end{theorem}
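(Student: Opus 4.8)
The plan is to re-run, in the quasi-Garside setting, the proof of \cite[Theorem 1]{CalvezWiestCurve}, checking that it uses nothing beyond the axioms of Definition \ref{D:Garside} and the normal-form and lattice machinery recalled above, and in particular never appeals to the finiteness of the set of simple elements (nor to the finite order of $\tau$). That proof has exactly two ingredients. The first is the combinatorial fact that triangles with preferred-path sides are $2$-thin, which is already recorded as Proposition \ref{P:Preferred}(iv); its proof in \cite[Lemma 7]{CalvezWiestCurve} is a delicate but purely combinatorial induction (on the canonical lengths involved) manipulating left normal forms through $\wedge$, $\vee$, $\partial$ and the decomposition Proposition \ref{P:Preferred}(i), all of which are available in any Garside monoid. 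The second is a geometric ``guessing geodesics'' criterion (a lemma of Bowditch type): a graph which, for every pair of vertices $V_1,V_2$, carries a connected path $\mathcal A(V_1,V_2)$ through them with (G1) $\operatorname{diam}\mathcal A(V_1,V_2)$ uniformly bounded whenever $d_{AL}(V_1,V_2)\leqslant 1$ and (G2) the triangles formed by three such paths uniformly thin, is $\delta$-hyperbolic with $\delta$ a function of the two bounds, the paths then lying within a uniformly bounded Hausdorff distance of the genuine geodesics. This criterion makes no local-finiteness hypothesis, which matters because $\mathcal C_{AL}(G)$ is never locally finite --- even in the finite-type case each vertex has infinitely many absorbable neighbors --- so it applies here verbatim.

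It then remains to supply (G1), i.e.\ to bound the diameter of $\mathcal A(V_1,V_2)$ when $V_1,V_2$ are joined by an edge. By $G$-equivariance (Proposition \ref{P:Preferred}(iii)) one may take $V_1=\ast$. If the edge is of the first (simple) kind, then $\underline{V_2}$ is a single proper simple element and $\mathcal A(\ast,V_2)$ has length $1$. If it is of the second (absorbable) kind, realized by an absorbable element $g$ --- the case $\sup(g)=0$ being symmetric by Proposition \ref{P:Preferred}(ii), so assume $\inf(g)=0$ --- then $\underline{V_2}=g$ with left normal form $s_1\cdots s_q$, and $\mathcal A(\ast,V_2)$ visits the vertices $(s_1\cdots s_i)\Delta^{\mathbb Z}$ for $0\leqslant i\leqslant q$. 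Applying Lemma \ref{L:AbsorbableSubword} to the factorization $g=1\cdot(s_1\cdots s_i)\cdot(s_{i+1}\cdots s_q)$ into positive elements shows each prefix $s_1\cdots s_i$ is absorbable, hence each intermediate vertex is joined to $\ast$ by an edge; so $\mathcal A(\ast,V_2)$ has diameter at most $2$. Feeding (G1) with constant $2$ and (G2) from Proposition \ref{P:Preferred}(iv) into the criterion yields that $\mathcal C_{AL}(G)$ is hyperbolic and the preferred paths are uniformly unparameterized quasi-geodesics; running the quantitative bookkeeping exactly as in \cite{CalvezWiestCurve}, which nowhere mentions the number of simple elements, produces the hyperbolicity constant $60$ and the Hausdorff bound $39$.

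The only genuine work is thus an audit of \cite{CalvezWiestCurve}: locating every place where finiteness of the set of simples, or the finite order of $\tau$, might be used, and checking it is inessential. My expectation --- in line with the authors' assertion that everything ``extends immediately'' --- is that none of it is: the argument is combinatorics of normal forms on a lattice, hence blind to the cardinality of the simple set, and the only quantitative inputs to the geometric criterion are the two constants in (G1) and (G2), which we have just seen are unchanged. The one step deserving a second glance is the foundational one: that Condition (2) of Definition \ref{D:Garside}, even with the modified range $\mathbb N\cup\frac{2}{3}\mathbb N\cup\{0\}$ of $\rho$ permitted by Remark \ref{R:Garside}(ii), still yields the Noetherianity underlying finiteness of normal forms and termination of the inductions --- it does, that range being a well-ordered subset of the reals.
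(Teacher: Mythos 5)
Your proposal is correct and matches the paper's own treatment: the paper does not reprove this statement but simply cites \cite[Theorem 1]{CalvezWiestCurve} and observes that its proof --- the $2$-thin preferred-path triangles together with a guessing-geodesics criterion requiring no local finiteness --- uses nothing depending on the finiteness of the set of simple elements, so it carries over verbatim to the quasi-Garside setting. Your audit (bounded diameter of preferred paths between adjacent vertices via Lemma \ref{L:AbsorbableSubword}, thinness via Proposition \ref{P:Preferred}(iv), and Noetherianity from the modified weight function) is exactly the verification implicit in the paper's remark that these statements ``extend immediately.''
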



\section{Euclidean Coxeter groups}\label{S:Euclidean}

In this section we gather a number of useful facts concerning euclidean Coxeter groups. We follow McCammond's approach developped in \cite{BMcC, McCFailure, McCS} with Brady and Sulway, see also \cite{McCSurvey}. 
Other useful references are \cite{Bourbaki,Humphreys}.

\subsection{Euclidean isometries}
\begin{definition}[Euclidean space and its isometries]
We denote by $\mathbb E=\mathbb R^n$ the $n$-dimensional euclidean space endowed with the usual scalar product 
$$\langle \eta,\eta'\rangle=\langle (\eta_i)_{i=1}^n , (\eta'_i)_{i=1}^n \rangle=\sum_{i=1}^n \eta_i\eta'_i.$$
 Two elements 
$\eta,\eta'\in \mathbb E$ are \emph{orthogonal} if $\langle \eta,\eta'\rangle = 0$. 
Given $\eta,\eta'\in \mathbb E$, the \emph{distance}  between $\eta$ and~$\eta'$ is the euclidean norm 
$||\eta'-\eta||=\sqrt{\langle \eta'-\eta,\eta'-\eta\rangle}$.
We denote by $\Isom(\mathbb E)$ the group of isometries of~$\mathbb E$, that is, the group of distance-preserving transformations of $\mathbb E$. 
Throughout, the elements of~$\mathbb E$ will be formally considered as vectors; however, 
$\mathbb E$ can be identified with the affine space which it underlies and sometimes it will be intuitively clearer to think of elements of~$\mathbb E$ also as \emph{points} rather than vectors. 
\end{definition}

\begin{definition}[Linear subspace] A \emph{linear subspace} of $\mathbb E$ is a non-empty subset closed under linear combination. 
We denote by $\Dim(U)$ the dimension of a linear subspace $U$ of $\mathbb E$. 
Each linear subspace~$U$ of $\mathbb E$ has an \emph{orthogonal complement} $U^{\perp}$, which is the linear subspace of $\mathbb E$ made of those elements in $\mathbb E$ which are orthogonal to all elements of $U$. There is a direct sum decomposition $\mathbb E=U\oplus U^{\perp}$ and $\Dim(U^{\perp})=n-\Dim(U)$ is called the \emph{codimension} of $U$. 
\end{definition}

\begin{definition}[Affine subspace]
A subset $B$ of $\mathbb E$ is an \emph{affine subspace} if there is a linear subspace~$U$ of $\mathbb E$ and an element $\theta$ of $\mathbb E$ such that $B=U+\theta$. Note that $U=\{\eta'-\eta, \ \eta,\eta'\in B\}$. 
The linear subspace $U$ is called the \emph{direction} of the affine subspace $B$ and we denote it by 
$\Dir(B)$. 
 The \emph{dimension} of~$B$ is $\Dim(\Dir(B))$. An affine subspace is a linear subspace if and only if it contains $0_{\mathbb E}$ (equivalently if it is equal to its direction). Given an affine subspace $B$, there is a unique $\theta_0\in B$ such that $||\theta_0||$ is minimal; then $B=\Dir(B)+\theta_0$ and this is called the \emph{standard form} of $B$ --note that $\theta_0\in \Dir(B)^{\perp}$. Two affine subspaces $B_1,B_2$ of $\mathbb E$ are \emph{parallel} if $B_1\cap B_2=\emptyset$ and $\Dir(B_1)\subset \Dir(B_2)$ (or vice-versa). 
 \end{definition}

%
%
%

\begin{definition}[Hyperplane and reflection]
A \emph{hyperplane} of $\mathbb E$ is an affine subspace of dimension $n-1$. Given a hyperplane $H$ in $\mathbb E$, there is a unique non-trivial isometry $r_H$ of $\mathbb E$ which fixes~$H$ pointwise; $r_H$ is called the \emph{reflection through~$H$}. The orthogonal complement of $\Dir(H)$ is a line in $\mathbb E$; a non-trivial vector in this line is called a \emph{root} of the reflection $r_H$. 
Given a hyperplane $H\subset \mathbb E$ and a root $\alpha$ of $r_H$, there is a unique $c\in \mathbb R$ such that $H=\{\eta\in \mathbb E, \langle \eta, \alpha\rangle=c\}$. With the same notation, we also write $r_{\alpha,c}=r_H$. This isometry can be described explicitly by 
$$r_{\alpha,c}(\eta)=\eta-2\frac{\langle\alpha,\eta\rangle-c}{\langle\alpha,\alpha\rangle}\alpha,\ \ \forall \eta\in \mathbb E.$$
\end{definition}

\begin{definition}[Translation]
Given $\lambda\in \mathbb E$, the map $t_{\lambda}: \eta\mapsto \eta+\lambda$ is called the \emph{translation of vector $\lambda$}; this transformation belongs to $\Isom(\mathbb E)$.  
The subset of all translations in $\Isom(\mathbb E)$ is an abelian subgroup isomorphic to the additive group of $\mathbb E$. 
\end{definition}

\begin{definition}[Basic invariants]
Associated to any isometry $u\in \Isom(\mathbb E)$ are two basic invariants, called the \emph{move-set} and the \emph{min-set} of $u$ \cite[Definition 3.1]{BMcC}. 
Given $\eta\in\mathbb E$, 
its \emph{displacement} under $u$ is $\dis_u(\eta)=u(\eta)-\eta$ and the move-set of $u$ is the set of displacements: $\Mov(u)=\{\dis_u(\eta), \eta\in \mathbb E\}$.
For every $u\in \Isom(\mathbb E)$, $\Mov(u)$ is an affine subspace of $\mathbb E$; the min-set of $u$ is the set $\Min(u)=\{\eta\in \mathbb E, ||\dis_u(\eta)||\  \text{minimal}\}$ and this is also an affine subspace of $\mathbb E$ 
\cite[Proposition 3.2]{BMcC}. Note that $\Mov(u)$ is a linear subspace if and only if $u$ has some fixed point, in which case $u$ is called \emph{elliptic}. Otherwise, $u$ is called \emph{hyperbolic}. For each isometry $u$, the respective directions of the move-set and the min-set of $u$ are mutually orthogonal complementary linear subspaces of $\mathbb E$ \cite[Lemma 3.6]{BMcC}. Intuitively, the move-set of $u$ is the set of vectors which are motions of points under the isometry $u$ while the min-set of $u$ is the set of points with the minimal possible motion. 
\end{definition}


\begin{example}
(i) Let $H$ be a hyperplane in $\mathbb E$. The reflection $r_H$ is an elliptic isometry, with $\Min(r_H)=H$ and $\Mov(r_H)= (\Dir(H))^{\perp}$. 
(ii) If $\lambda\in \mathbb E$ is non-zero, the translation of vector~$\lambda$ is a hyperbolic isometry with $\Min(t_{\lambda})=\mathbb E$ and $\Mov(t_{\lambda})=\{\lambda\}$. 
(iii) Let $H\subset \mathbb E$ be a hyperplane; a \emph{glide-reflection} through $H$ is the composition of the reflection $r_H$ and a translation $t_{\lambda}$ by a non-zero vector $\lambda\in \Dir(H)$; this is a hyperbolic isometry whose min-set is $H$ and whose move-set is the affine line $(\Dir(H))^{\perp}+\lambda$. 
\end{example}

%


\begin{definition}[Reflection length and order] By Cartan-Dieudonn\'e Theorem, the reflections generate $\Isom(\mathbb E)$. Given $u\in \Isom(\mathbb E)$, its \emph{reflection length} $|u|_{\Isom}$ is the minimal number of reflections needed to write $u$. Note that conjugates of reflections are again reflections, so that the reflection length is invariant under conjugacy. We have a partial order on $\Isom(\mathbb E)$ given by $u\preccurlyeq u'$ if and only if $|u|_{\Isom}+|u^{-1}u'|_{\Isom}=|u'|_{\Isom}$. Given $v\in \Isom(\mathbb E)$, we denote by $[1,v]=[1,v]^{\Isom(\mathbb E)}$ the \emph{interval} formed by those isometries $u$ satisfying $u\preccurlyeq v$. 
\end{definition}

The main result in \cite{BMcC} gives a close relation between this partial order and the basic invariants. Here, we record only what will be used in the sequel: 

\begin{proposition}\label{P:Poset}
\begin{itemize}
\item[(i)]  A hyperbolic isometry can never be smaller than an elliptic one. 
\item[(ii)] {\rm{\cite[Theorem 8.7]{BMcC}.}}
Let $v\in \Isom(\mathbb E)$. 
If $u_1, u_2\in [1,v]$ are elliptic isometries, then $u_1\preccurlyeq u_2$ if and only if $\Min(u_2)\subset \Min(u_1)$.
\end{itemize}
\end{proposition}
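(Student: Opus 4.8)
Part (ii) is exactly \cite[Theorem 8.7]{BMcC}, so there is nothing to do there; the content to be supplied is part (i). The plan is to deduce it from the additivity of reflection length that is built into the order $\preccurlyeq$, combined with the Brady--McCammond description of $|u|_{\Isom}$ in terms of the move-set of $u$.

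I would first record the behaviour of move-sets under products: for $u_1,u_2\in\Isom(\mathbb E)$ one has $\Mov(u_1u_2)\subseteq\Mov(u_1)+\Mov(u_2)$ (Minkowski sum of affine subspaces, hence again an affine subspace), because $\dis_{u_1u_2}(\eta)=\dis_{u_1}(u_2(\eta))+\dis_{u_2}(\eta)$ for all $\eta\in\mathbb E$; passing to directions gives $\Dim(\Dir(\Mov(u_1u_2)))\leqslant\Dim(\Dir(\Mov(u_1)))+\Dim(\Dir(\Mov(u_2)))$. The second ingredient is the reflection-length formula: $|u|_{\Isom}=\Dim(\Dir(\Mov(u)))$ when $u$ is elliptic and $|u|_{\Isom}=\Dim(\Dir(\Mov(u)))+2$ when $u$ is hyperbolic; in particular $|u|_{\Isom}\geqslant\Dim(\Dir(\Mov(u)))$ in all cases, with the gap at least $2$ in the hyperbolic case. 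Granting these, (i) is short: if $u$ is hyperbolic and $u\preccurlyeq v$, set $w=u^{-1}v$, so that $v=uw$ and $|v|_{\Isom}=|u|_{\Isom}+|w|_{\Isom}$; then
$$\Dim(\Dir(\Mov(v)))\leqslant\Dim(\Dir(\Mov(u)))+\Dim(\Dir(\Mov(w)))\leqslant(|u|_{\Isom}-2)+|w|_{\Isom}=|v|_{\Isom}-2,$$
so $v$ cannot be elliptic, for that would force $|v|_{\Isom}=\Dim(\Dir(\Mov(v)))\leqslant|v|_{\Isom}-2$; hence $v$ is hyperbolic, which is precisely the assertion.

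The step I expect to carry the real weight is the reflection-length formula, and specifically the ``$+2$'' in the hyperbolic case; this I would either cite from \cite{BMcC} or prove directly, as follows. Iterating the product bound along a reflection factorization $u=r_1\cdots r_m$ and using that each $\Mov(r_i)$ is a line through the origin, $\Mov(u)$ lies in a linear subspace $V$ with $\Dim(V)\leqslant m$. When $u$ is hyperbolic $\Mov(u)$ is not linear, hence is a proper affine subset of $V$, so $d:=\Dim(\Dir(\Mov(u)))<\Dim(V)\leqslant m$. Moreover, writing $\bar u\in O(n)$ for the linear part of $u$, one has $\Dir(\Mov(u))=\mathrm{Im}(\bar u-\mathrm{Id})$, and $\bar u$ restricts on this $d$-dimensional subspace to an orthogonal map with no nonzero fixed vector, so its determinant there equals $(-1)^{d}$; hence $\det\bar u=(-1)^{d}$, while $\det\bar u=(-1)^{m}$ since $\bar u$ is a product of $m$ linear reflections, whence $m\equiv d\pmod 2$. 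Together with $m>d$ this gives $m\geqslant d+2$, and taking $m=|u|_{\Isom}$ yields the hyperbolic inequality (the elliptic equality $|u|_{\Isom}=\Dim(\Dir(\Mov(u)))$ being the sharp form of Cartan--Dieudonn\'e applied to the linear part). In the write-up I would most likely just quote \cite{BMcC} for this and keep the deduction of (i) above as the visible argument.
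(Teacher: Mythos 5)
Your proof is correct, and it reconstructs exactly the facts the paper relies on: the paper itself offers no argument for this proposition, simply recording it from \cite{BMcC}, and part (i) there rests on the same reflection-length formula (elliptic length $=\Dim(\Dir(\Mov(u)))$, hyperbolic length $=\Dim(\Dir(\Mov(u)))+2$, going back to Scherk) together with the subadditivity of move-sets that you prove. So your write-up is a valid self-contained substitute for the citation, with no gap; in the final version you could indeed just quote \cite{BMcC} as you suggest.
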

%
%


\subsection{Euclidean Coxeter groups}
\begin{definition}[Euclidean Coxeter groups]
Irreducible euclidean Coxeter groups (hence also irreducible Artin-Tits groups of euclidean type) are 
classified into four infinite families and five exceptional groups. The corresponding Coxeter graphs $\widetilde A_n (n\geqslant 1)$, $\widetilde B_n (n\geqslant 2)$, $\widetilde C_n (n\geqslant 2)$, $\widetilde D_n (n\geqslant 4)$, $\widetilde E_6$, $\widetilde E_7$, $\widetilde E_8$, $\widetilde F_4$ and $\widetilde G_2$ 
are displayed in Figure \ref{F:Coxeter2}. 

For $Z\in\{A,B,C,D,E,F,G\}$ and $n\in \mathbb N$, let $Z_n$ be the full subgraph of $\widetilde Z_n$ consisting of the black vertices --see Figure \ref{F:Coxeter2}. Then the Coxeter group $W_{Z_n}$ is finite. The graphs $Z_n$ and $\widetilde Z_n$ are known as \emph{Dynkin diagrams} and \emph{extended Dynkin diagrams} respectively (up to replacing the edges labeled~4 by a double edge and the edge labeled 6 by a triple edge). The meaning of the inequality signs will be explained later. From now on, we choose an arbitrary fixed extended Dynkin diagram~$\widetilde Z_n$ and we denote $W=W_{\widetilde Z_n}$ and $W_0=W_{Z_n}$. 

\end{definition}
\begin{figure}[h]
\begin{center}
\includegraphics{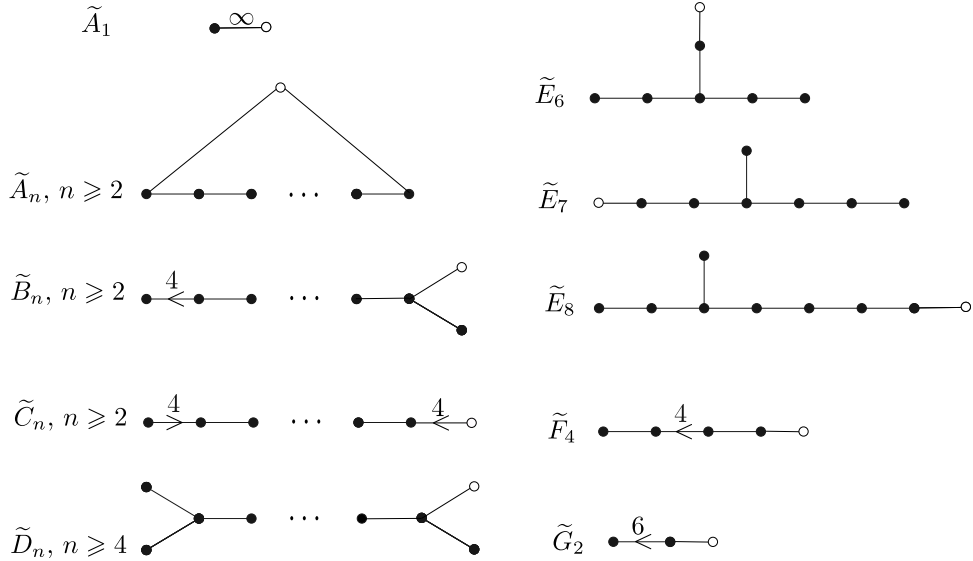}
\end{center}
\caption{Coxeter graphs  for irreducible euclidean Coxeter groups. For $Z\in \{A,B,C,D,E,F,G\}$ and $n\in \mathbb N$, the graph named $\widetilde Z_n$ has $n+1$ vertices. As usual, edge labels 3 are dropped.}
\label{F:Coxeter2}
\end{figure}

\begin{definition}[Root system]
The \emph{root system} of type $Z_n$ is described in \cite[Planches I to~IX]{Bourbaki}: this is a finite subset of $\mathbb E=\mathbb R^n$ and its elements are called \emph{roots}. Let us denote by $\Xi$ this root system; it contains a 
\emph{simple} system $\Xi'$ (a linear basis of $\mathbb E$ such that each $\alpha\in \Xi$ is a linear combination of $\Xi'$ with coefficients all of the same sign).
The set of (linear) reflections $r_{\alpha,0}$, $\alpha\in \Xi'$ is the set of standard generators of the finite Coxeter group $W_0$. 
 
\end{definition}

\begin{definition}[Highest root]
Given $\alpha\in \Xi$, the sum of the coefficients in the --unique-- expression of $\alpha$ as a linear combination of $\Xi'$ is called the \emph{height} of $\alpha$. There is a unique highest root; let us denote it by~$\mu$. We have a unique linear combination 
\begin{eqnarray}
\mu=\sum_{\alpha\in \Xi'} m_{\alpha}\alpha,
\label{Form:Mu0}
\end{eqnarray}
 where the $m_{\alpha}$ are positive integers. 
  \end{definition}

 \begin{definition}[Standard generators]
Let $r_{\mu,1}$ be the reflection in $\mathbb E$ through the hyperplane $\{\eta\in \mathbb E, \langle \eta, \mu\rangle=1\}$, let 
$${S=\{r_{\alpha,0}, \alpha\in \Xi'\}\cup\{r_{\mu,1}\}};$$ then $S$ is the set of standard generators for the euclidean Coxeter group $W$ and the reflection $r_{\mu,1}$ corresponds to the white vertex in the extended Dynkin diagram of Figure \ref{F:Coxeter2}. 
\end{definition}

\begin{definition}[Length of roots]
If $ \widetilde Z_n$ has no label on its edges, all roots in $\Xi$ have the same length while in the other cases, the roots have two different lengths and they are called \emph{long} or \emph{short}, accordingly. In presence of a label 4 (6, respectively), the ratio of the two different root lengths is $\sqrt{2}$ ($\sqrt{3}$, respectively). 
In the extended Dynkin diagram, the inequality sign(s) indicate(s) which roots are longer. 
\end{definition}

\begin{definition}[Elements of $W$, coroots and the Coxeter complex]
Every reflection in $W$ has the form $r_{\alpha,c}$ for $\alpha\in \Xi$ and $c\in \mathbb Z$; the corresponding  hyperplanes $H_{\alpha,c}$ provide a simplicial tiling of $\mathbb E$ called the \emph{Coxeter complex}. The spacing between two consecutive parallel hyperplanes $H_{\alpha,i}$ and $H_{\alpha,i+1}$ is given by the vector $\frac{\alpha}{\langle \alpha,\alpha\rangle}$, so that hyperplanes corresponding to long roots are more closely spaced. The set of translations in $W$ is generated by the translations of the form $t_{\alpha^{\vee}}=r_{\alpha,{i+1}}r_{\alpha,i}$ where $\alpha^{\vee}=\frac{2\alpha}{\langle\alpha,\alpha\rangle}$ for $\alpha\in \Xi$. The vector $\alpha^{\vee}$ is called the
 \emph{coroot} associated to~$\alpha$. 
\end{definition}

\subsection{Coxeter elements} \label{S:CoxeterElement}

\begin{definition}[Coxeter element]
A \emph{Coxeter element} for $W$ is a product of the elements in $S$ in any order. Every Coxeter element is a \emph{hyperbolic} isometry whose move-set is a non-linear affine hyperplane of $\mathbb E$ (\cite[Proposition 7.2]{McCFailure}). 
\end{definition}

From this point on, we make the additional assumption that $W$ is not $W_{\widetilde A_n}$ -- the case of $\widetilde A_n$ ($n\geqslant 1$) is somewhat different, as the extended Dynkin diagram is not a tree (except for $W_{\widetilde A_1}$ which is dihedral). The proof of the acylindrical hyperbolicity of the corresponding Artin-Tits group was already sketched in the introduction.

\begin{definition}[Bipartite Coxeter element]\label{D:Bipartite}
As the extended Dynkin diagram $\widetilde Z_n$ is a tree, there is a unique way of 2-coloring its vertices (say blue and green) in such a way that no two adjacent vertices have the same color. This yields a partition 
$S=S_{b}\sqcup S_{g}$ of $S$ in which the reflections in each part commute pairwise. Without loss, we may assume that $r_{\mu,1}\in S_{b}$.
Denote by $\Phi=\Xi'\sqcup \{\mu\}$; this set of roots is partitioned accordingly: $\Phi=(\{{\mu}\}\sqcup \Xi'_{b}) \sqcup \Xi'_{ g}$, where the vectors in each part are pairwise orthogonal. 
The two elements $\iota_{ b}=\Pi_{r\in S_b}r$ and $\iota_g=\Pi_{r\in S_g} r$ are involutions 
and we obtain two special Coxeter elements, inverses of each other --namely, $\iota_b\iota_g$ and $\iota_g\iota_b$-- called the \emph{bipartite Coxeter elements}.
From now on, we fix $w=\iota_b\iota_g$; this element will be referred to as \emph{the} Coxeter element. Also, we denote by $w_0$ the Coxeter element of $W_0$ defined by $w_0=r_{\mu,1}w$.

\end{definition}

\begin{definition}[Coxeter axis]\label{D:CoxeterAxis}
The min-set of the Coxeter element $w$ is a line called the \emph{Coxeter axis}; let us denote it by $L$. According to \cite[Remark 8.4]{McCFailure}, the direction of the Coxeter axis is given by the following:

\begin{eqnarray}
\gamma= \mu-\sum_{\alpha\in  \Xi'_b} m_{\alpha}\alpha=\sum_{\alpha\in  \Xi'_g} m_{\alpha}\alpha,
\label{Form:CoxeterAxis}
\end{eqnarray}
where the $m_{\alpha}$, $\alpha\in \Xi'$ are the positive integers involved in Formula (\ref{Form:Mu0}). For all $\eta\in L$, 
the displacement $\dis_w(\eta)$ under $w$ is the same and we denote it by $\gamma_0$ --see \cite[Proposition 3.7]{BMcC}; 
the standard form of $\Mov(w)$ is $\Mov(w)=\Dir(L)^{\perp}+\gamma_0$. 
\end{definition}

\begin{definition}[Horizontal and vertical]
A vector which is orthogonal to the direction of the Coxeter axis is called \emph{horizontal}; a vector which is not horizontal is called \emph{vertical}. Similarly, a reflection is called \emph{horizontal} (or \emph{vertical}, respectively) if its root is horizontal (vertical, respectively). 
\end{definition}

\begin{lemma}\label{L:SimpleNotHorizontal}
The standard generators of $W$ are vertical reflections. 
\end{lemma}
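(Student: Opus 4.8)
The plan is to unwind the definitions: a standard generator is a vertical reflection exactly when its root is not orthogonal to $\gamma$, the direction of the Coxeter axis $L$. The roots of the standard generators are precisely the elements of $\Phi=\Xi'\sqcup\{\mu\}$: indeed the root of $r_{\alpha,0}$ is $\alpha$, and since $r_{\mu,1}$ is the reflection through $\{\eta\in\mathbb E,\ \langle\eta,\mu\rangle=1\}$, whose direction is $\mu^{\perp}$, its root is $\mu$. So it suffices to prove that $\langle\beta,\gamma\rangle\neq 0$ for every $\beta\in\Phi$.

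I would exploit the two expressions for $\gamma$ given in Formula (\ref{Form:CoxeterAxis}), together with the orthogonality relations packaged in Definition \ref{D:Bipartite}, namely that $\Phi=(\{\mu\}\sqcup\Xi'_b)\sqcup\Xi'_g$ with the vectors in each of the two parts pairwise orthogonal, and that all the integers $m_\alpha$ occurring in Formula (\ref{Form:Mu0}) (hence in Formula (\ref{Form:CoxeterAxis})) are positive. For a root $\beta\in\Xi'_g$, use $\gamma=\sum_{\alpha\in\Xi'_g}m_\alpha\alpha$: by pairwise orthogonality of $\Xi'_g$ all cross terms vanish and $\langle\beta,\gamma\rangle=m_\beta\langle\beta,\beta\rangle>0$. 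For $\beta=\mu$ and for $\beta\in\Xi'_b$, use instead $\gamma=\mu-\sum_{\alpha\in\Xi'_b}m_\alpha\alpha$ together with the pairwise orthogonality of $\{\mu\}\cup\Xi'_b$: one gets $\langle\mu,\gamma\rangle=\langle\mu,\mu\rangle>0$ and, for $\beta\in\Xi'_b$, $\langle\beta,\gamma\rangle=-m_\beta\langle\beta,\beta\rangle<0$. In every case $\langle\beta,\gamma\rangle\neq 0$, so $\beta$ is a vertical vector and the associated standard reflection is vertical.

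There is no genuine obstacle here; the proof is a short computation. The only points requiring care are identifying the root of $r_{\mu,1}$ correctly as $\mu$ (via the direction of its mirror), and making sure that the coefficients $m_\alpha$ appearing in the two displayed expressions for $\gamma$ are literally the positive integers of Formula (\ref{Form:Mu0}), so that splitting the sum $\mu=\sum_{\alpha\in\Xi'}m_\alpha\alpha$ along the bipartition and using orthogonality within each colour class annihilates exactly the intended terms.
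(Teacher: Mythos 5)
Your proof is correct and follows essentially the same route as the paper: split $\Phi$ along the bipartition, use the two expressions for $\gamma$ from Formula (\ref{Form:CoxeterAxis}) together with pairwise orthogonality within each colour class, and compute $\langle\beta,\gamma\rangle$ case by case to see it is nonzero since the $m_\alpha$ are positive. The extra care you take in identifying the root of $r_{\mu,1}$ as $\mu$ is a fine (and correct) elaboration of what the paper leaves implicit.
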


\begin{proof}
Let $\alpha\in\Phi$ be the root of an element in $S$. We need to check that $\alpha$ is not orthogonal to the direction of the Coxeter axis. 
Write $ \Phi=(\{\mu\}\sqcup \Xi'_b)\sqcup  \Xi'_g$ the bipartite decomposition of~$ \Phi$ as in Definition \ref{D:Bipartite}.
According to Formula (\ref{Form:CoxeterAxis}), we see that 
$$\langle \alpha,\gamma\rangle= \begin{cases} 
\langle \mu,\mu\rangle & \text{if $\alpha=\mu$,}\\ 
- m_{\alpha}\langle \alpha,\alpha\rangle & \text{if $\alpha\in \Xi'_b$,}\\
m_{\alpha}\langle \alpha,\alpha\rangle & \text{if $\alpha\in \Xi'_g$.}
\\
\end{cases}$$

because vectors in $\{\mu\}\sqcup \Xi'_b$ (in $\Xi'_g$, respectively) are pairwise orthogonal. The coefficients $m_{\alpha}$ in Formula~(\ref{Form:Mu0}) cannot be zero. 
It follows that $\alpha$ is not orthogonal to the direction of the Coxeter axis.
\end{proof}

\begin{definition}[Horizontal root system and horizontal factorizations] \label{D:Horizontal} See \cite[Definition 6.1]{McCS}.
Let $\Xi_h$ be the intersection of~$\Xi$ with the orthogonal complement of $\Dir(L)$. It turns out that 
$\Xi_h$ is a root system in $\Dir(L)^{\perp}$, called the \emph{horizontal root system}. The corresponding Coxeter group~$W_h$ is a subgroup  of $W_0$ called the \emph{horizontal Coxeter group}. 

The Coxeter element $w$
factorizes as $$w=r_{\mu,1}r_{\mu,0} w_h=t_{\mu^{\vee}}w_h,$$
where $w_h$ is a Coxeter element of $W_h$. 
%
%
Any factorization of $w$ of the form 
${w=t_{\lambda}r_1\ldots r_{n-1}}$ where~$t_{\lambda}$ is a translation and the $r_i$ are horizontal reflections is called a \emph{horizontal factorization}. In any horizontal factorization, the move-set of the horizontal part is precisely the horizontal hyperplane $\Dir(L)^{\perp}$ and we have $\Mov(w)=\Dir(L)^{\perp}+\lambda$. 
It follows that if $t_{\lambda}$ is the translation in a horizontal factorization, the projection of $\lambda$ on the vertical axis  $\Dir(L)$ is $\gamma_0$ (defined in Definition~\ref{D:CoxeterAxis}), independently of $\lambda$. 
\end{definition}

\begin{definition}[Translation in the direction of the Coxeter axis]\label{D:Translation}
The element $w_h$ has finite order --denote it by $e_0$-- and $w^{e_0}$ acts as a translation on $\mathbb E$ in the vertical direction (of vector $e_0\gamma_0$). 
For $u\in W$, we denote by $T_w(u)=w^{e_0}uw^{-e_0}$. 
If $r$ is a vertical reflection, then $T_w(r)$ is a vertical reflection through a distinct parallel hyperplane. If $r$ is a horizontal reflection, then $T_w(r)=r$. 
\end{definition}

\begin{proposition}[Translation part of $w$]\label{P:ShiftVertical} {\rm{See \cite[Proposition 6.3]{McCS}.}}
For every $i\in\mathbb Z$, we have $T_w(r_{\mu,i})=r_{\mu,i+1}$. 
\end{proposition}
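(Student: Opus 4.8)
The plan is to reduce the statement to the already-established facts about the translation map $T_w$ and the horizontal factorization $w = t_{\mu^{\vee}} w_h$. First I would recall that by Definition \ref{D:Translation}, $T_w(u) = w^{e_0} u w^{-e_0}$, where $e_0$ is the order of the horizontal Coxeter element $w_h$, and that $w^{e_0}$ acts on $\mathbb E$ as the translation $t_{e_0 \gamma_0}$ in the vertical direction $\Dir(L)$. The reflection $r_{\mu,1}$ is a standard generator, hence by Lemma \ref{L:SimpleNotHorizontal} it is a \emph{vertical} reflection; therefore, again by Definition \ref{D:Translation}, $T_w(r_{\mu,1}) = r_{\mu,2}$ is a vertical reflection through a parallel hyperplane. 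More generally, since $r_{\mu,i}$ has root $\mu$ (which is vertical) for every $i \in \mathbb Z$, each $r_{\mu,i}$ is a vertical reflection and $T_w(r_{\mu,i})$ is the reflection through the hyperplane $t_{e_0\gamma_0}(H_{\mu,i})$.

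The core computation is then to identify that translated hyperplane. Conjugating the explicit formula for a reflection, one has $t_{\lambda} r_{\alpha,c} t_{\lambda}^{-1} = r_{\alpha,\, c + \langle \alpha,\lambda\rangle}$ for any vector $\lambda$; applying this with $\alpha = \mu$, $c = i$ and $\lambda = e_0\gamma_0$ gives
$$
T_w(r_{\mu,i}) = w^{e_0} r_{\mu,i} w^{-e_0} = t_{e_0\gamma_0}\, r_{\mu,i}\, t_{e_0\gamma_0}^{-1} = r_{\mu,\, i + e_0\langle \mu,\gamma_0\rangle}.
$$
So everything comes down to showing $e_0 \langle \mu, \gamma_0\rangle = 1$. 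I would establish this in two steps. First, compute $T_w(r_{\mu,0})$ directly using the factorization $w = r_{\mu,1} r_{\mu,0} w_h$: since $w_h$ fixes $\Dir(L)^\perp$ pointwise and in particular commutes appropriately with the horizontal data, and since $r_{\mu,1} r_{\mu,0} = t_{\mu^{\vee}}$, one unwinds $w^{e_0} r_{\mu,0} w^{-e_0}$ and finds it equals $r_{\mu,1}$ — this is really the content of the identity $w \cdot H_{\mu,0} $ being the next hyperplane up in the $\mu$-direction combined with the fact that after $e_0$ steps the horizontal rotation has returned to the identity. This forces $e_0\langle\mu,\gamma_0\rangle = 1$. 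Substituting back into the displayed formula yields $T_w(r_{\mu,i}) = r_{\mu,i+1}$ for all $i$.

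The main obstacle I anticipate is the bookkeeping in the step that pins down $e_0\langle \mu,\gamma_0\rangle = 1$: one must keep careful track of the interplay between the vertical translation component $\gamma_0$ of $w$ (equivalently, of $t_{\mu^{\vee}}$, by the last sentence of Definition \ref{D:Horizontal}) and the spacing $\frac{\mu}{\langle\mu,\mu\rangle}$ of the $H_{\mu,\bullet}$ hyperplanes, checking that exactly one such hyperplane is crossed per application of $w$ after accounting for the horizontal rotation of order $e_0$. Once the normalization $\langle \mu, e_0\gamma_0\rangle = 1$ is in hand, the conjugation formula closes the argument uniformly in $i$; alternatively, one can argue inductively from the $i=0$ case using $r_{\mu,i+1} = t_{\mu^{\vee}} r_{\mu,i} t_{\mu^{\vee}}^{-1}$ together with the fact, from Proposition \ref{P:Preferred}-style equivariance of conjugation, that $T_w$ commutes with conjugation by $t_{\mu^\vee}$ (both being conjugations by vertical translations, which commute).
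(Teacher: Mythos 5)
The paper does not actually prove this statement: it is imported verbatim from McCammond--Sulway (the citation to [McCS, Proposition 6.3]), so your attempt has to stand on its own. Its first half does: the conjugation formula $t_{\lambda}r_{\alpha,c}t_{\lambda}^{-1}=r_{\alpha,\,c+\langle\alpha,\lambda\rangle}$ is correct and correctly reduces the proposition to the single scalar identity $\langle\mu,e_0\gamma_0\rangle=1$. But that identity is exactly where the proof stops being a proof. The ``unwinding'' that is supposed to show $T_w(r_{\mu,0})=r_{\mu,1}$ is just a restatement of the $i=0$ case of the proposition, so the argument is circular; and the one concrete assertion offered in its support --- that $w_h$ fixes $\Dir(L)^{\perp}$ pointwise --- is backwards: $w_h$ is elliptic with move-set equal to $\Dir(L)^{\perp}$, and what it fixes is a line in the \emph{vertical} direction $\Dir(L)$. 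The fallback induction is also based on a false identity: since $r_{\mu,i+1}r_{\mu,i}=t_{\mu^{\vee}}$ and $\langle\mu,\mu^{\vee}\rangle=2$, conjugation by $t_{\mu^{\vee}}$ sends $r_{\mu,i}$ to $r_{\mu,i+2}$, not to $r_{\mu,i+1}$.

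The missing step is not mere bookkeeping; no soft argument of the kind you sketch can produce the constant $1$. Writing $\gamma_0$ as the vertical projection of $\mu^{\vee}$ and using $\mu\perp\Xi'_b$ (so $\langle\mu,\gamma\rangle=\langle\mu,\mu\rangle$), the shift is $\langle\mu,e_0\gamma_0\rangle=2e_0\langle\mu,\mu\rangle/\langle\gamma,\gamma\rangle$, a type-dependent quantity that also depends on how the hyperplanes $H_{\mu,c}$ are indexed. For instance, in type $\widetilde C_2$ with the Bourbaki realization used in Section \ref{S:Euclidean} ($\mu=2e_1$, $\alpha_1=e_1-e_2$, $\alpha_2=2e_2$) one has $w=r_{\mu,1}r_{\alpha_2,0}r_{\alpha_1,0}$, $w_h=r_{e_1+e_2,0}$, $e_0=2$ and $w^{e_0}=t_{e_1-e_2}$, so with the normalization $H_{\mu,c}=\{\eta:\langle\eta,\mu\rangle=c\}$ fixed in the paper the conjugation by $w^{e_0}$ carries $H_{\mu,i}$ to $H_{\mu,i+2}$. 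So the stated constant $1$ hinges on the precise indexing conventions of [McCS] (this is the content of their Proposition 6.3), and your argument never engages with them; the ``obstacle'' you flag at the end is precisely the unproved core of the statement. (Note, incidentally, that what the paper later uses, in Lemma \ref{L:Loxod1}, is only that $T_w(r_{\mu,1})$ is a reflection through a parallel hyperplane distinct from $H_{\mu,1}$, which already follows from Definition \ref{D:Translation}.)
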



%

\subsection{Reflection length and Garside structure}
\begin{definition}[Reflection length in $W$]
The reflection length $|u|_W$ of $u\in W$ is the minimal number of reflections in $W$ needed to express $u$. For $u,v\in W$, the relation $u\preccurlyeq_W v$ if and only if $|u|_W+|u^{-1}v|_W=|v|_W$ defines a partial order on $W$. The \emph{interval} $[1,v]^W$ is the set $\{u\in W, \ u\preccurlyeq_W v\}$. 
For any Coxeter element $w$, it turns out that $|w|_W=|w|_{\Isom}$,  $|u|_W=|u|_{\Isom}$ for every $u\in [1,w]^W$ and the restrictions of the orders $\preccurlyeq _W$ and $\preccurlyeq$ (on $\Isom(\mathbb E)$) to $[1,w]^W$ coincide. 
\end{definition}

\begin{proposition}{\rm{(Some elements in $[1,w]^W$){\bf{.}}}}
\begin{itemize}
\item[(i)] {\rm{\cite[Theorem 9.6]{McCFailure} and \cite[Definition 5.5]{McCS}.}} All vertical reflections lie in $[1,w]^W$ and $[1,w]^W$ contains exactly two horizontal reflections for each antipodal pair of horizontal roots in the root system $\Xi_h$.
\item[(ii)] {\rm{\cite[Proposition 6.3]{McCS}.}} The translations in $[1,w]^W$ are exactly those which appear in a horizontal factorization of $w$.
\end{itemize} \end{proposition}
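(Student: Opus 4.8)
Both statements collect facts established by McCammond in~\cite{McCFailure} and by McCammond--Sulway in~\cite{McCS}; the plan is to quote them, but I indicate the mechanism and isolate the one place where real work is hidden. The common arithmetic ingredient is the reflection-length dictionary for euclidean isometries: an elliptic $u$ satisfies $|u|_{\Isom}=\Dim\Mov(u)$, a hyperbolic one satisfies $|u|_{\Isom}=\Dim\Mov(u)+2$, and any two reflection factorizations of a fixed isometry have the same length parity (look at the determinant of the linear part). In particular $|w|_{\Isom}=(n-1)+2=n+1=|S|$; I also use $|u|_W\geqslant|u|_{\Isom}$ for every $u\in W$, and $|u|_W=|u|_{\Isom}$ for $u\in[1,w]^W$.

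For~(ii): if $w=t_{\lambda}\rho_1\cdots\rho_{n-1}$ is a horizontal factorization, then $h:=\rho_1\cdots\rho_{n-1}$ is elliptic with $\Mov(h)=\Dir(L)^{\perp}$, so $|h|_W=|h|_{\Isom}=n-1$; the nontrivial input here, supplied by~\cite[Proposition~6.3]{McCS}, is that $|t_{\lambda}|_W=2$, and then $|t_{\lambda}|_W+|h|_W=n+1=|w|_W$ gives $t_{\lambda}\preccurlyeq_W w$. Conversely, let a non-trivial translation $t=t_{\lambda}$ satisfy $t\preccurlyeq_W w$ and write $w=tu$ with $|t|_W+|u|_W=n+1$. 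Since $\dis_{u}(\eta)=\dis_{w}(\eta)-\lambda$ we have $\Mov(u)=\Mov(w)-\lambda$, an $(n-1)$-dimensional affine subspace; if $u$ were hyperbolic this would give $|u|_W\geqslant|u|_{\Isom}=n+1$, contradicting $|u|_W=n+1-|t|_W\leqslant n-1$. Hence $u$ is elliptic, $\Mov(u)$ is the unique $(n-1)$-dimensional linear subspace with direction $\Dir(L)^{\perp}$, namely $\Dir(L)^{\perp}$ itself, whence $|u|_{\Isom}=n-1$, $|t|_W=2$ and $|u|_W=n-1$; finally, because $\Mov(\rho_1\cdots\rho_{n-1})\subseteq\sum_{i}\Mov(\rho_i)$, a minimal $W$-reflection factorization $u=\rho_1\cdots\rho_{n-1}$ must use $n-1$ roots spanning $\Dir(L)^{\perp}$, hence all horizontal, so $w=tu$ is a horizontal factorization.

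For~(i): every standard generator of $W$ is a vertical reflection (Lemma~\ref{L:SimpleNotHorizontal}) and the bipartite factorization $w=\iota_b\iota_g$ expands into a reduced reflection factorization of $w$ into the $n+1$ standard reflections; that every \emph{vertical} reflection occurs below $w$ then comes from the structural analysis of reduced factorizations of the euclidean Coxeter element $w$ in~\cite[Theorem~9.6]{McCFailure} (equivalently, from the min-set criterion of~\cite{BMcC}, of which Proposition~\ref{P:Poset} records only a fragment: all vertical hyperplanes are admissible relative to $L=\Min(w)$). For the horizontal reflections I would use $w=t_{\mu^{\vee}}w_h=r_{\mu,1}r_{\mu,0}\rho_1\cdots\rho_{n-1}$ with $w_h$ a Coxeter element of the finite Coxeter group $W_h$: every reflection of $W_h$ lies below $w_h$ (standard for finite Coxeter groups), and conjugating these linear horizontal reflections by the two vertical reflections $r_{\mu,0}$ and $r_{\mu,1}$ produces, for each antipodal pair of horizontal roots, exactly two hyperplanes whose reflections are $\preccurlyeq_W w$; this is precisely the inventory of~\cite[Theorem~9.6]{McCFailure} and~\cite[Definitions~5.5 and~6.1]{McCS}, which I would quote rather than reprove.

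The main obstacle is not the length bookkeeping above but the imported facts it rests on: that reflection length in $\Isom(\mathbb E)$ is realised inside the discrete group $W$ along $[1,w]^W$, and the precise enumeration of reflections below $w$ in the euclidean setting. Both are at the technical core of~\cite{BMcC,McCFailure,McCS}, so the sensible route is to take them as black boxes and derive the two consequences stated here.
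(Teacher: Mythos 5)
The paper gives no proof of this proposition at all: it is stated as an import from \cite{McCFailure} and \cite{McCS} with exactly the citations you invoke, so your proposal, which ultimately defers the substantive facts to those references (and whose length bookkeeping for (ii) is correct), matches the paper's treatment. One small caveat: your heuristic in (i) of obtaining the second horizontal reflection per antipodal pair of horizontal roots by conjugating reflections of $W_h$ by the vertical reflections $r_{\mu,0}$ and $r_{\mu,1}$ is not literally right (such a conjugate generally has root $r_{\mu,i}(\alpha)\neq\pm\alpha$, hence is not a parallel horizontal reflection), but since you explicitly quote the inventory of \cite{McCFailure} and \cite{McCS} rather than reprove it, this does not affect the statement.
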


\begin{definition}[Dual monoid and group]\label{D:Dual} The \emph{monoid associated} to $[1,w]^W$ is the monoid $M_w^W$ generated by $[1,w]^W$ subject to the relations $uu' = v$ whenever $u,u',v\in [1,w]^W$ satisfy $|u|_W+|u'|_W=|v|_W$ and $uu'=v$ in $W$. The \emph{dual Artin-Tits group} is the group with the same presentation; it is isomorphic to the Artin-Tits group $A$ associated to $W$ \cite[Theorem C]{McCS}. 
\end{definition}

\begin{theorem}[Lattice] \label{T:Lattice}
\begin{itemize}
\item[(i)]{\rm{\cite[Proposition 2.11]{McCS}.}} If the interval $[1,w]^W$ equipped with the restriction of the partial order~$\preccurlyeq_W$ is a lattice, then $M_w^W$ is a Garside monoid and the Artin-Tits group $A$ associated to $W$ is a Garside group. There is a monoid homomorphism (or a \emph{weight function}) $\rho: M_w^W\longrightarrow \mathbb N\cup\{0\}$ assigning 1 to each reflection; $w$ is the Garside element whose set of left and right divisors (the set of simple elements) is the interval $[1,w]^W$. 
\item[(ii)] {\rm{\cite[Theorem 10.3]{McCFailure}.}} The interval $[1,w]^W$ is a lattice if and only if the horizontal root system~$\Xi_h$ is irreducible. 
\item[(iii)] {\rm{\cite[Section 11]{McCFailure}.}} When $\Xi_h$ is not irreducible, it has $k_0$ irreducible components, with $k_0=2$ or $k_0=3$. The system $\Xi_h$ is irreducible if and only if $\widetilde Z_n\in \{\widetilde C_n, \widetilde G_2\}$. 
\end{itemize}
\end{theorem}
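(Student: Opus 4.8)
All three assertions are quoted from \cite{McCS} and \cite{McCFailure}; the plan is to indicate the shape of each argument rather than to reconstruct the proofs in full.

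For (i), I would invoke the general theory of interval Garside monoids. By Definition~\ref{D:Dual}, $M_w^W$ is the monoid presented by the partial multiplication table of $[1,w]^W$: its generators are the elements of $[1,w]^W$ and its relations record the equalities $uu'=v$ in $W$ for which $|u|_W+|u'|_W=|v|_W$. Assuming $([1,w]^W,\preccurlyeq_W)$ is a lattice, the conditions of Definition~\ref{D:Garside} are checked in turn. The function $u\mapsto|u|_W$ respects every defining relation, hence descends to a monoid homomorphism $\rho\colon M_w^W\to\mathbb N\cup\{0\}$ which is additive and vanishes only at $1$, giving Condition~(2). The element $w$ is balanced inside the interval---its sets of left and right divisors in $[1,w]^W$ coincide, the defining property of a Garside element---and reflections are atoms dividing $w$, hence the divisors of $w$ generate $M_w^W$; this is Condition~(4). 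Finally, left/right cancellativity and the lattice property of $\preccurlyeq$ and $\succcurlyeq$ on all of $M_w^W$ are propagated from the finite interval by the standard germ-to-Garside argument. Then $M_w^W$ is a finite-type Garside monoid, it embeds in its group of fractions, and that group is $A$ by \cite[Theorem~C]{McCS}. The only non-routine input is the hypothesis itself---the lattice property---which is the content of (ii).

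For (ii), the argument is geometric. Using the description of $[1,w]^W$ recalled above (vertical reflections, horizontal reflections, and the translations occurring in horizontal factorizations of $w$) together with Proposition~\ref{P:Poset}(ii), which makes comparability of elliptic elements of the interval equivalent to nesting of their min-sets, one analyzes meets and joins. When $\Xi_h$ is irreducible, one shows that any two elements of $[1,w]^W$ have a unique greatest lower bound and a unique least upper bound, obtained by intersecting respectively spanning the relevant affine subspaces and checking the outcome still lies $\preccurlyeq_W w$; irreducibility of $\Xi_h$ is what forces the horizontal bookkeeping to admit unique infima and suprema, as in the noncrossing-partition lattice of a finite Coxeter group. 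Conversely, when $\Xi_h$ splits into $k_0\ge 2$ irreducible components, distinct components yield incompatible horizontal factorizations of $w$, and one exhibits two elements of $[1,w]^W$ with several distinct minimal common upper bounds, so the poset is not a lattice. This dichotomy is \cite[Theorem~10.3]{McCFailure}, which I would cite directly, since the case analysis behind it is long.

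For (iii), this is a finite computation, carried out in \cite[Section~11]{McCFailure}: for each extended Dynkin diagram $\widetilde Z_n\neq\widetilde A_n$ one takes $\Xi$ and the highest root $\mu$ from \cite[Planches~I--IX]{Bourbaki}, computes the direction $\gamma$ of the Coxeter axis from Formula~(\ref{Form:CoxeterAxis}) via the bipartite $2$-coloring of Definition~\ref{D:Bipartite}, forms $\Xi_h=\Xi\cap\gamma^{\perp}$ as in Definition~\ref{D:Horizontal}, and reads off the connected components of the induced Coxeter diagram; one finds $\Xi_h$ irreducible (of rank $n-1$) for $\widetilde C_n$ and of type $A_1$ for $\widetilde G_2$, and $2$ or $3$ components in every other type, so $k_0\in\{2,3\}$. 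The substantive obstacle in the whole statement is part (ii): the equivalence between the lattice property of $[1,w]^W$ and the irreducibility of $\Xi_h$ is the deep structural result, whereas (i) is an application of standard Garside machinery and (iii) is an explicit case check.
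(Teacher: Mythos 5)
The paper offers no proof of this theorem: all three parts are imported, (i) being cited to \cite{McCS} and (ii)--(iii) to \cite{McCFailure}, so your decision to cite the same sources and merely sketch the arguments matches the paper's treatment, and your sketches of (ii) and (iii) are consistent with what those references actually do (the min-set/nesting analysis behind the lattice criterion, and the case-by-case identification of the horizontal root systems, which indeed gives $A_{n-1}$ for $\widetilde C_n$, $A_1$ for $\widetilde G_2$, and $k_0\in\{2,3\}$ components otherwise).

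There is, however, one genuine error in your sketch of (i), and it collides with a point that is structurally central to this paper: you assert that $M_w^W$ is a \emph{finite-type} Garside monoid and speak of cancellativity and the lattice property being ``propagated from the finite interval.'' In the euclidean setting the interval $[1,w]^W$ is infinite: all vertical reflections divide $w$, and for a fixed vertical root $\alpha$ (such roots exist by Lemma \ref{L:SimpleNotHorizontal}) the reflections $r_{\alpha,c}$, $c\in\mathbb Z$, already give infinitely many distinct simple elements. So when $[1,w]^W$ is a lattice, $M_w^W$ is a Garside monoid of \emph{infinite} type, i.e.\ a quasi-Garside monoid in the sense of Remark \ref{R:Garside}(i). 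The interval (germ) construction you invoke does not require finiteness---it needs the lattice property together with balancedness of $w$, which follows from conjugation-invariance of reflection length---so the conclusion of (i) stands, but your justification should be corrected. The distinction is not cosmetic here: the fact that these interval monoids have infinitely many simples is exactly why the paper must observe that the additional length graph construction and its hyperbolicity from \cite{CalvezWiestCurve} extend beyond finite-type Garside groups, and (for instance) why $\tau$ need not have finite order in Definition \ref{D:Complements}.
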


In order to deal with the cases where $[1,w]^W$ is not a lattice, McCammond and Sulway define a supergroup $C$ of $W$. They need first to introduce new isometries:

\begin{definition}[Factored translation] See \cite[Definition 6.7]{McCS}.
Suppose that the horizontal root system $\Xi_h$ is reducible; let $\Dir(L)^{\perp}=U_1\oplus \ldots\oplus U_{k_0}$ be the corresponding direct sum decomposition of the horizontal hyperplane --recall that $k_0\in\{2,3\}$ by Theorem \ref{T:Lattice}(iii). 
Let $t_{\lambda}$ be a translation in $[1,w]^W$. The projection of $\lambda$ onto the vertical line $\Dir(L)$ is $\gamma_0$ (see Definition~\ref{D:Horizontal}); for $i=1,\ldots,k_0$, let $\lambda_i$ be the projection of $\lambda$ onto the subspace $U_i$. The \emph{factored translations} corresponding to $\lambda$ are the $k_0$ translations $t_{\lambda_i+\frac{1}{k_0}\gamma_0}$. 
\end{definition}

\begin{definition}[Other groups]\label{D:Groups}
The \emph{crystallographic group} $C$ is the subgroup of $\Isom(\mathbb E)$ generated by $W$ together with the 
factored translations. The \emph{diagonal group} $D$ is the subgroup of $\Isom(\mathbb E)$ generated by the translations in $[1,w]^W$ together with horizontal reflections in $[1,w]^W$ and the \emph{factored group} $F$ is the subgroup of $\Isom(\mathbb E)$ generated by factored translations and horizontal reflections in $[1,w]^W$. A length is given so that a reflection has length 1, a factored translation has length $\frac{2}{k_0}$ and a translation has length 2. As for $W$, this yields a length and a partial order on the respective groups. 
The respective intervals $[1,w]^D$, $[1,w]^F$ and $[1,w]^C$ are naturally defined in the same way as $[1,w]^W$ (see for instance \cite[Section 4]{McCSurvey}) and one can also associate corresponding monoids and groups as in Definition \ref{D:Dual}. 
\end{definition}

\begin{theorem}[Garside] {\rm{See \cite[Proposition 7.4]{McCS} and \cite[Theorems A and B]{McCS}.}}
The interval $[1,w]^C$ is a balanced  lattice (which contains $[1,w]^W$). The monoid $M_w^C$ and the group~$\mathfrak C$ associated to $[1,w]^C$ are Garside and the Artin-Tits group $A$ associated to $W$ is a subgroup of $\mathfrak C$. The group~$\mathfrak C$ is called the \emph{crystallographic Garside group}. 
\end{theorem}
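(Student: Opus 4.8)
The plan is to follow McCammond and Sulway. The statement decomposes into three ingredients: that $\big([1,w]^C,\preccurlyeq_C\big)$ is a balanced lattice containing $[1,w]^W$; that the interval monoid $M_w^C$ and its group of fractions $\mathfrak C$ are Garside; and that $A$ is realised as a subgroup of $\mathfrak C$.

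For the first ingredient I would begin by checking that $w$ is a \emph{balanced} element, so that its sets of left and right divisors in $M_w^C$ coincide; this uses the two expressions $w=\iota_b\iota_g=\iota_g\iota_b$ and the fact that $u\mapsto u^{-1}w$ maps $[1,w]^C$ bijectively onto itself (if $|u|_C+|u^{-1}w|_C=|w|_C$, the same identity holds with $u$ replaced by $u^{-1}w$). The heart of the matter is the lattice property of $\big([1,w]^C,\preccurlyeq_C\big)$. The strategy is to analyse an element $u\preccurlyeq_C w$ through the orthogonal splitting $\mathbb E=\Dir(L)\oplus(U_1\oplus\cdots\oplus U_{k_0})$, where $U_1,\dots,U_{k_0}$ are the irreducible components of the horizontal root system $\Xi_h$ and $k_0\in\{2,3\}$ by Theorem \ref{T:Lattice}(iii): by Proposition \ref{P:Poset}, an elliptic $u\preccurlyeq_C w$ is determined by its min-set and comparisons among elliptics are reverse inclusions of min-sets, which can be read off through this decomposition, while a hyperbolic (translation-type) $u\preccurlyeq_C w$ can never sit below an elliptic one, so the failure of the lattice property is confined to meets and joins mixing translation-type elements with products of horizontal reflections --- and this is precisely the failure in $[1,w]^W$ recorded in Theorem \ref{T:Lattice}(ii)--(iii) when $\Xi_h$ is reducible. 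The factored translations $t_{\lambda_i+\frac{1}{k_0}\gamma_0}$ were introduced to close up those missing meets and joins, by adjoining the fractional vertical shifts in $\frac{1}{k_0}\gamma_0\mathbb Z$. Concretely, I would first pin down $[1,w]^D$ and $[1,w]^F$ explicitly --- a finite computation with the move-sets and min-sets of translations, factored translations and horizontal reflections --- verifying in particular that $[1,w]^F$ is a lattice, and then assemble $[1,w]^C$ by combining the vertical-plus-$W_0$ behaviour with the $F$-part and computing joins and meets componentwise, the vertical component being the only one that had to be repaired. The inclusion $[1,w]^W\subseteq[1,w]^C$, and the fact that the $C$-length restricts to the $W$-length there, come out of the same analysis. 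I expect this lattice verification to be the main obstacle: it requires an exact inventory of the elliptic and hyperbolic isometries lying below $w$ and of how their min-sets intersect.

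Granting the first ingredient, the second is the standard passage from a balanced lattice interval to a Garside structure. The assignment of $1$ to each reflection and of $\frac{2}{k_0}$ to each factored translation extends to a weight function $\rho\colon M_w^C\to\mathbb N\cup\frac{2}{3}\mathbb N\cup\{0\}$ --- the mild modification of Definition \ref{D:Garside}(2) allowed by Remark \ref{R:Garside}(ii). Left and right cancellativity of $M_w^C$, the fact that prefix and suffix divisibility are lattice orders on $M_w^C$, and the fact that $w$ is a Garside element whose simple elements are exactly $[1,w]^C$, then all follow formally from $\big([1,w]^C,\preccurlyeq_C\big)$ being a balanced lattice --- the same mechanism already used in Theorem \ref{T:Lattice}(i). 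Passing to the group of fractions yields that $\mathfrak C$ is a Garside group, of infinite type since $[1,w]^C$ is infinite.

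For the third ingredient, the inclusion $[1,w]^W\hookrightarrow[1,w]^C$ is compatible with the defining relations of the interval monoids: a relation $uu'=v$ with $|u|_W+|u'|_W=|v|_W$ remains a relation with the $C$-lengths adding up, by the first ingredient. It therefore induces a monoid homomorphism $M_w^W\to M_w^C$, and hence a group homomorphism from $A$ --- identified with the dual group of $[1,w]^W$ as in Definition \ref{D:Dual} --- to $\mathfrak C$. The delicate claim is that this homomorphism is injective; I would take this from \cite[Theorems A and B]{McCS}, where it is shown that the monoid map $M_w^W\to M_w^C$ is injective and that this injectivity persists after passing to the groups of fractions, so that $A$ sits inside $\mathfrak C$ as asserted.
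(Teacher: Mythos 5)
The paper does not actually prove this statement: it imports it wholesale from McCammond--Sulway (\cite[Proposition 7.4, Theorems A and B]{McCS}), and your proposal, read carefully, does the same for the two points that carry all the difficulty --- the balanced-lattice property of $[1,w]^C$ (your first ingredient is a plan, not an argument, and you yourself flag it as ``the main obstacle'') and the injectivity of $A\to\mathfrak C$, which you explicitly take from \cite{McCS}. So in substance your route coincides with the paper's: cite the crystallographic Garside structure rather than re-derive it. The second ingredient (balanced lattice interval $\Rightarrow$ Garside monoid/group, with the weight $\rho$ as in Remark \ref{R:Weight}) is indeed the formal part, exactly as in Theorem \ref{T:Lattice}(i).

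Two cautions about the sketched portions, in case you intend them as more than a paraphrase of \cite{McCS}. First, your argument for balancedness invokes ``the two expressions $w=\iota_b\iota_g=\iota_g\iota_b$''; these are \emph{not} equal --- by Definition \ref{D:Bipartite} the bipartite Coxeter elements $\iota_b\iota_g$ and $\iota_g\iota_b$ are inverses of each other, and their equality would force $w^2=1$. So that step, as written, fails (the surrounding observation that $u\mapsto u^{-1}w$ preserves left divisors, via conjugacy-invariance of length, shows left divisors map to right divisors but does not by itself give equality of the two divisor sets). Second, $[1,w]^D$ and $[1,w]^F$ are infinite posets --- infinitely many translations lie below $w$, since every coroot-lattice vector in the affine hyperplane $\Mov(w)$ gives one --- so ``a finite computation with move-sets and min-sets'' and ``computing joins and meets componentwise'' understates what is needed; McCammond and Sulway get the lattice property through the product/middle-group structure of the diagonal, factored and crystallographic groups, not by a direct case check. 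None of this affects the validity of the statement as used in the paper, provided those two points are, as you indicate, taken from \cite{McCS}.
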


\begin{remark}\label{R:Weight}
$M_w^C$ is equipped with a monoid homomorphism $\rho$ extending the lengths given in Definition \ref{D:Groups};  
if $k_0=3$, $\rho$ takes values in $\mathbb N\cup \frac{2}{3}\mathbb{N}\cup\{0\}$: it sends each reflection to 1, and each factored translation to $\frac{2}{3}$ --see Remark \ref{R:Garside}(ii). In any case, reflections and factored translations are the atoms and $w$ is the Garside element. 
\end{remark}

\section{The loxodromic elements}\label{S:Loxodromic}

In this section, an irreducible euclidean Coxeter group $W$ distinct of $W_{\widetilde A_n}$ is fixed; we keep all notations from the previous section. 

The set $\mathcal S$ of simple elements of the crystallographic Garside group $\mathfrak C$ is in bijection with $[1,w]^C$ (and contains a copy of $[1,w]^W$). 
We shall use the same notation for a simple element in $\mathcal S$ and for the corresponding isometry of $\mathbb E$. For any simple element $s\in \mathcal S$, we denote by $\mathscr A(s)$ the set of atoms which left divide $s$. Since the interval $[1,w]^C$ is balanced, for each $s\in \mathcal S$, $\mathscr A(s)$ is also the set of atoms which right divide~$s$.

When dealing with elliptic isometries, the factored translations do not play an important role: 

\begin{lemma}\label{L:EllipticStartingSet}
Let $u\in[1,w]^W$ be an elliptic isometry. Then $$\mathscr A(u)= \{r, \text{$r$ is a reflection in $W$ whose fixed hyperplane contains $\Min(u)$}\}.$$
\end{lemma}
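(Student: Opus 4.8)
The statement identifies, for an elliptic $u\in[1,w]^W$, the set $\mathscr A(u)$ of atoms that left-divide the simple element $u$ with the set of reflections $r$ of $W$ whose fixed hyperplane $H_r$ contains $\Min(u)$. The plan is to prove the two inclusions, first reducing from ``atoms'' to ``reflections of $W$''. By Remark~\ref{R:Weight}, the atoms of $\mathfrak C$ are the reflections of $W$ together with the factored translations; a factored translation is hyperbolic, while $u$ is elliptic, so by Proposition~\ref{P:Poset}(i) a hyperbolic isometry can never satisfy $t\preccurlyeq u$. Hence no factored translation left-divides $u$, and $\mathscr A(u)$ consists only of reflections of $W$ (which indeed all lie in $[1,w]^W$ by the Proposition describing some elements in $[1,w]^W$, part (i), together with Lemma~\ref{L:SimpleNotHorizontal} for the vertical ones). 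So it remains to show that for a reflection $r\in W$, one has $r\preccurlyeq_W u$ (equivalently $r\in[1,w]^W$ and $r\preccurlyeq_W u$, but membership is automatic) if and only if $\Min(u)\subseteq H_r=\Min(r)$.

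For this I would invoke Proposition~\ref{P:Poset}(ii): since $u\in[1,w]^W$ and the orders $\preccurlyeq_W$ and $\preccurlyeq$ agree on $[1,w]^W$ (as recalled just before the Proposition on elements of $[1,w]^W$), and since both $r$ and $u$ are elliptic elements of $[1,w]$, the relation $r\preccurlyeq u$ holds iff $\Min(u)\subseteq\Min(r)$. As $\Min(r)=H_r$ (Example (i) on reflections), this is exactly the condition $\Min(u)\subseteq H_r$. The only gap to fill is that one must know every reflection $r$ with $\Min(u)\subseteq H_r$ actually lies in $[1,w]^W$, so that Proposition~\ref{P:Poset}(ii) applies to it: if $r$ is vertical this is the cited Proposition(i); if $r$ is horizontal one uses that $\Min(u)\subseteq H_r$ forces the horizontal root of $r$ to be (up to sign) among the horizontal roots, and the cited Proposition(i) says $[1,w]^W$ contains exactly the two reflections associated with each antipodal pair of horizontal roots — so $r\in[1,w]^W$ in all cases. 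For the converse direction one also wants: if $r\in\mathscr A(u)$, automatically $r\in[1,w]^W$ because it divides the simple element $u\preccurlyeq w$, so Proposition~\ref{P:Poset}(ii) again gives $\Min(u)\subseteq H_r$.

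The main obstacle, I expect, is bookkeeping about which ambient interval/order one is working in: Proposition~\ref{P:Poset}(ii) is stated for $[1,v]^{\Isom(\mathbb E)}$, whereas $\mathscr A(u)$ and $\preccurlyeq_W$ live in the dual-monoid / Garside world, so one has to carefully quote the identification of $\preccurlyeq_W$ with $\preccurlyeq$ on $[1,w]^W$, the equality $|u|_W=|u|_{\Isom}$ there, and the fact that left-division of a simple element $u$ in $M_w^C$ by an atom $r$ is the same as $r\preccurlyeq_W u$ in the dual group (this is the defining property of the dual monoid in Definition~\ref{D:Dual} together with the balancedness of $[1,w]^C$, already used when introducing $\mathscr A(s)$). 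Once those translations between languages are in place, the geometric heart is just the single equivalence ``$r\preccurlyeq u\iff\Min(u)\subseteq\Min(r)$'' from Proposition~\ref{P:Poset}(ii), so no real computation is needed.
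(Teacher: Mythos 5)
Your second half — identifying $\mathscr A(u)$ with reflections via Proposition \ref{P:Poset}(ii) and $\Min(r)=H_r$ — is the same as the paper's. The genuine gap is in your first step, where you dismiss factored translations in one line: ``a factored translation is hyperbolic, $u$ is elliptic, so Proposition \ref{P:Poset}(i) rules it out.'' Proposition \ref{P:Poset}(i) concerns the partial order on $\Isom(\mathbb E)$ defined by \emph{reflection length}, whereas $t_F\in\mathscr A(u)$ means that $t_F$ left-divides $u$ inside the interval $[1,w]^C$, whose order is defined by the crystallographic weight $\rho$ (reflections have weight $1$, factored translations weight $\tfrac{2}{k_0}$, translations weight $2$). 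A factored translation does not lie in $W$, and its $\rho$-weight $\tfrac{2}{k_0}$ is not its reflection length $2$; the paper only identifies the orders $\preccurlyeq_W$ and $\preccurlyeq$ on $[1,w]^W$, so there is no licence to transfer ``hyperbolic never divides elliptic'' from $\Isom(\mathbb E)$ to $[1,w]^C$. This transfer is precisely what has to be proved, and it is the actual content of the paper's proof: it invokes \cite[Lemma 7.2]{McCS} to deduce from $t_F\preccurlyeq u$ that $u\in[1,w]^F$ and then $u\in[1,w]^D$, observes that $u$ then has nonzero vertical motion (all factored translations share the vertical component $\tfrac{\gamma_0}{k_0}$), hence cannot be a product of horizontal reflections alone, so some honest translation $t\in[1,w]^W$ divides $u$ with $|t^{-1}u|_W=|u|_W-2$ — and only at that point does Proposition \ref{P:Poset}(i) yield the contradiction with ellipticity. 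Your proposal skips this entire mechanism.

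A secondary inaccuracy: in your reverse inclusion you claim that a horizontal reflection $r$ with $\Min(u)\subseteq H_r$ lies in $[1,w]^W$ because its root is horizontal. That does not follow: for each horizontal root there are infinitely many reflections in $W$ (through the parallel hyperplanes $H_{\alpha,k}$, $k\in\mathbb Z$), and the cited proposition puts only \emph{two} of them, one per antipodal root pair, inside $[1,w]^W$. So membership of such an $r$ in the relevant interval needs a different justification (the paper is itself terse on this direction, but your argument as written conflates roots with reflections).
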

\begin{proof}
First, we shall see that no factored translation is in $\mathscr A(u)$. Suppose on the contrary that $\mathscr A(u)$ contains some factored translation $t_F$. 
Then by the first equality of \cite[Lemma 7.2]{McCS}, $u\in[1,w]^F$; by the second equality of \cite[Lemma 7.2]{McCS}, $u\in [1,w]^D$. Because all the factored translations have vectors with the  same vertical projection $\frac{\gamma_0}{k_0}$, 
$u$ has some vertical motion, so $u$ cannot be a product of only horizontal reflections. Therefore, there is a translation $t\in [1,w]^W$ so that $u=tu'$, with $|u'|_W=|u|_W-2$, which is impossible as $u$ was supposed to be elliptic --see Proposition \ref{P:Poset}(i). The fact that $\Min(u)$ is contained in the fixed hyperplane of every reflection in $\mathscr A(u)$ follows from Proposition~\ref{P:Poset}(ii). 
\end{proof}

In the sequel, we shall consider the set $\mathscr A(u)$ for different elliptic elements $u\in[1,w]^W$ and we will use Lemma \ref{L:EllipticStartingSet} without explicit reference. Also, according to our convention using the same symbol for an isometry in $[1,w]^C$ and the corresponding simple element in $\mathcal S$, we shall write, for $u\in [1,w]^C$, $\partial (u)=u^{-1}w$ and $\partial^{-1}(u)=wu^{-1}$. This is consistent with the notation in Definition~\ref{D:Complements}. In this context, the left-weightedness of a pair of simple elements $(s,s')$ is equivalent to $\mathscr A(\partial(s))\cap \mathscr A(s')=\emptyset$. Similarly, $(s,s')$ is right-weighted if and only if $\mathscr A(\partial^{-1}(s'))\cap \mathscr A(s)=\emptyset$.

Recall the ``translation'' $T_w$ defined by $T_w(u)=w^{e_0}u w^{-e_0}$ for all $u\in W$ (Definition \ref{D:Translation}). Note that 
for  $u\in [1,w]^W$, $\mathscr A(T_w(u))=T_w(\mathscr A(u))$.
Recall also the elements $\iota_b$ and $\iota_g$ --blue and green-- from Definition \ref{D:Bipartite}, which satisfy $\iota_b\iota_g=w$ (that is, $\partial(\iota_b)=\iota_g$ and $\partial^{-1}(\iota_g)=\iota_b$). 
In what follows we will denote $\iota'_b=T_w(\iota_b)$ and $\iota'_g=T_w(\iota_g)$, so that $w=\iota'_b\iota'_g$. Observe also that $\mathscr A(\iota_b)=\{r_{\mu,1}\}\sqcup \{r_{\alpha,0},\alpha\in \Xi'_b\}$ and $\mathscr A(\iota_g)=\{r_{\alpha,0}, \alpha\in \Xi'_g\}$. Finally, recall that $w_0$ is defined by $w_0=r_{\mu,1}w$.


\begin{lemma}\label{L:Loxod1}
The pair $(\iota'_b,w_0)$ is left and right-weighted. 
\end{lemma}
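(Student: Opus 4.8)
The claim is that $(\iota'_b, w_0)$ is both left-weighted and right-weighted, where $\iota'_b = T_w(\iota_b)$ and $w_0 = r_{\mu,1}w$. Using the translation $T_w$, which is a conjugation and hence compatible with the interval $[1,w]^C$ and with the starting-set map $\mathscr{A}$, I would first reduce everything to statements about $\iota_b$, $\iota_g$, and $w_0' := T_w^{-1}(w_0)$. Concretely, since $w = \iota'_b\iota'_g$ we have $\partial(\iota'_b) = \iota'_g = T_w(\iota_g)$ and $\partial^{-1}(\iota'_b) = T_w(\iota_b^{-1}w)\cdot(\text{something})$; more cleanly, left-weightedness of $(\iota'_b, w_0)$ is equivalent to $\mathscr{A}(\iota'_g) \cap \mathscr{A}(w_0) = \emptyset$, and since $\mathscr{A}(\iota'_g) = T_w(\mathscr{A}(\iota_g))$, I want to compute $T_w(\mathscr{A}(\iota_g))$ explicitly. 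By Proposition~\ref{P:ShiftVertical} and Definition~\ref{D:Translation}, $T_w$ fixes horizontal reflections and shifts vertical ones to parallel hyperplanes; combined with $\mathscr{A}(\iota_g) = \{r_{\alpha,0}, \alpha \in \Xi'_g\}$, I can describe $\mathscr{A}(\iota'_g)$ as a set of reflections whose fixed hyperplanes all pass through the point $T_w(0_{\mathbb{E}})$ (the image under $w^{e_0}$ of the origin).

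**Key steps.** First I would record, via Lemma~\ref{L:EllipticStartingSet}, that for an elliptic $u \in [1,w]^W$ the set $\mathscr{A}(u)$ consists exactly of the reflections of $W$ whose fixed hyperplane contains $\Min(u)$; in particular $\mathscr{A}(\iota'_g)$ is governed by $\Min(\iota'_g) = w^{e_0}\cdot\Min(\iota_g)$ and $\mathscr{A}(w_0)$ by $\Min(w_0)$. Second, I would identify $\Min(\iota_g) = \bigcap_{\alpha \in \Xi'_g} H_{\alpha,0}$, a linear subspace through the origin, so $\Min(\iota'_g)$ is an affine subspace through $w^{e_0}(0)$, which lies strictly off any hyperplane $H_{\alpha,0}$ through the origin (because $w^{e_0}$ is a nontrivial translation in the Coxeter-axis direction, which is vertical by Lemma~\ref{L:SimpleNotHorizontal}, hence not contained in any $H_{\alpha,0}$). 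Third, I would pin down $\Min(w_0)$: since $w_0 = r_{\mu,1}w$ is a Coxeter element of the finite group $W_0 = W_{Z_n}$, it is elliptic and its min-set is its fixed-point set, a single point or low-dimensional subspace determined by the chamber of $W_0$; crucially this fixed set can be arranged (or is, by the standard conventions) to lie on the "origin side", i.e. every reflection in $\mathscr{A}(w_0)$ has fixed hyperplane through $\Min(w_0)$, and one checks these hyperplanes are all of the form $H_{\alpha,0}$ or (for $r_{\mu,1}$) $H_{\mu,1}$. The disjointness $\mathscr{A}(\iota'_g) \cap \mathscr{A}(w_0) = \emptyset$ then follows because a reflection in the intersection would have its fixed hyperplane simultaneously containing $\Min(\iota'_g) \ni w^{e_0}(0)$ and contained in the $W_0$-hyperplane arrangement through the relevant point — forcing a hyperplane through both $0$ and $w^{e_0}(0)$, i.e. a horizontal hyperplane, contradicting that all of $\Xi'_g \cup\{\mu\}$ are vertical roots. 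The right-weighted statement is handled symmetrically, using that $[1,w]^C$ is balanced so $\mathscr{A}$ detects both left and right divisors, and using $\partial^{-1}$ in place of $\partial$; the parallel computation involves $T_w(\iota_b)$ and the element $w_0^{-1}$ or its $w$-conjugate, and the same "no horizontal hyperplane through two vertically-displaced points" argument closes it.

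**Main obstacle.** The delicate point is the bookkeeping around $r_{\mu,1}$: we have $\mathscr{A}(\iota_b) = \{r_{\mu,1}\} \sqcup \{r_{\alpha,0} : \alpha\in\Xi'_b\}$ and $w_0 = r_{\mu,1}w$, so $r_{\mu,1}$ left-divides $w_0$ and appears in $\mathscr{A}(w_0)$, while $T_w(r_{\mu,1}) = r_{\mu,2}$ by Proposition~\ref{P:ShiftVertical}. I must verify that $r_{\mu,1} \notin \mathscr{A}(\iota'_g)$ and, for the right-weighted half, that the relevant shifted reflection does not collide with $\mathscr{A}(\partial^{-1}(w_0))$ — this requires carefully tracking which parallel copy $H_{\mu,i}$ each reflection uses and confirming the indices never coincide. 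I expect this index-tracking, together with correctly identifying $\Min(w_0)$ and confirming all its defining hyperplanes are the "level-0" (or level-1 for $\mu$) ones, to be where the real work lies; the geometric core — that no hyperplane of the arrangement is simultaneously vertical and contains two points differing by a vertical vector — is short once the min-sets are correctly located.
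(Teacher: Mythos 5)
Your overall strategy is the paper's: reduce both weightedness conditions to disjointness of atom sets $\mathscr A(\cdot)$, compute $\mathscr A(\iota'_b)$ and $\mathscr A(\iota'_g)$ via $T_w$ and Proposition~\ref{P:ShiftVertical}, and separate hyperplanes through the origin from their vertical translates. The left-weighted half is essentially correct, but it contains a concrete error: $r_{\mu,1}$ does \emph{not} left-divide $w_0$ and does not belong to $\mathscr A(w_0)$. From $w_0=r_{\mu,1}w$ one gets $r_{\mu,1}w_0=w$ with reflection lengths adding, i.e.\ $\partial^{-1}(w_0)=r_{\mu,1}$: this says $r_{\mu,1}$ is the left \emph{complement} of $w_0$, not a divisor of it. Indeed $w_0$ is a Coxeter element of the finite group $W_0$, so $\Min(w_0)=\{0\}$, and by Lemma~\ref{L:EllipticStartingSet} every reflection in $\mathscr A(w_0)$ has fixed hyperplane through $0$, whereas $0\notin H_{\mu,1}$. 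Your third step (``these hyperplanes are all of the form $H_{\alpha,0}$ or (for $r_{\mu,1}$) $H_{\mu,1}$'') is therefore false and is also inconsistent with your own disjointness argument, which needs every hyperplane coming from $\mathscr A(w_0)$ to pass through $0$. Fortunately the error only creates a spurious extra case; once it is removed, your separation argument (a common reflection would have a hyperplane containing both $0$ and $e_0\gamma_0$, hence a horizontal root, impossible because the reflections in $\mathscr A(\iota'_g)$ have roots in $\Xi'_g$, which are vertical) is exactly the paper's ``hyperplanes of reflections in $\mathscr A(\iota'_g)$ miss $0$, hyperplanes of reflections in $\mathscr A(w_0)$ contain $0$''.

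The right-weighted half, however, is not actually proved: you defer it as ``where the real work lies'', and the confusion above puts $r_{\mu,1}$ on the wrong side of the ledger. Cleared up, it is a one-line check, and it is the paper's argument: right-weightedness of $(\iota'_b,w_0)$ means $\mathscr A(\partial^{-1}(w_0))\cap\mathscr A(\iota'_b)=\emptyset$; since $\partial^{-1}(w_0)=r_{\mu,1}$ is an atom, the only thing to verify is $r_{\mu,1}\notin\mathscr A(\iota'_b)$. Now $\mathscr A(\iota'_b)=T_w(\mathscr A(\iota_b))=\{r_{\mu,2}\}\sqcup\{T_w(r_{\alpha,0}):\alpha\in\Xi'_b\}$ by Proposition~\ref{P:ShiftVertical}, and the elements of the second set are reflections whose roots are distinct from $\mu$, so no collision with $r_{\mu,1}$ is possible. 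You cite the needed ingredient $T_w(r_{\mu,1})=r_{\mu,2}$ but never identify $\mathscr A(\partial^{-1}(w_0))=\{r_{\mu,1}\}$ nor draw the conclusion, so as written the lemma is only half established.
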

\begin{proof}
As $\mathscr A(\iota_b)=\{r_{\mu,1}\}\sqcup \{r_{\alpha,0},\alpha\in \Xi'_b\}$, we have
$$\mathscr A(\iota'_b)=\{T_w(r_{\mu,1})\}\sqcup \mathscr A'=\{r_{\mu,2}\}\sqcup \mathscr A',$$ 
(the equality $T_w(r_{\mu,1})=r_{\mu,2}$ comes from Proposition \ref{P:ShiftVertical}), where $\mathscr A'$ is a set of reflections through hyperplanes orthogonal to roots which are distinct from $\mu$.
Recall that $\partial^{-1}(w_0)=r_{\mu,1}$; the previous discussion shows that $r_{\mu,1}\notin \mathscr A(\iota'_b)$, whence $\mathscr A(\iota'_b)\cap \mathscr A(\partial^{-1}(w_0))=\emptyset$ and the right-weightedness follows. Also, $\partial(\iota'_b)=\iota'_g$. Fixed hyperplanes of reflections in $\mathscr A(\iota'_g)$ do not contain~0 while fixed hyperplanes of reflections in $\mathscr A(w_0)$ do contain~0. Therefore $\mathscr A(\iota'_g)\cap \mathscr A(w_0)=\emptyset$, whence left-weightedness.
\end{proof}

\begin{lemma}\label{L:Loxod2}
The pair $(w_0,\iota'_g)$ is left and right-weighted.
\end{lemma}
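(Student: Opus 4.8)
The plan is to mimic the proof of Lemma \ref{L:Loxod1}, exchanging the roles of the two sides. Recall $w_0 = r_{\mu,1}w$, so that $w_0$ is a Coxeter element of the finite Coxeter group $W_0$; equivalently $w_0 = r_{\mu,1}\iota_b\iota_g = \iota''_b \iota_g$ where $\iota''_b = r_{\mu,1}\iota_b = \Pi_{r\in S_b\setminus\{r_{\mu,1}\}} r$ is the product of the (pairwise commuting) blue reflections other than $r_{\mu,1}$. Since all the reflections involved in $w_0$ are linear (their fixed hyperplanes pass through $0$), $w_0$ is elliptic, so Lemma \ref{L:EllipticStartingSet} applies to describe $\mathscr A(w_0)$, and likewise the partial factorisations give $\mathscr A(w_0) \supseteq \mathscr A(\iota''_b) = \{r_{\alpha,0}, \alpha\in\Xi'_b\}$ on the left and a copy of $\mathscr A(\iota_g)=\{r_{\alpha,0},\alpha\in\Xi'_g\}$ on the right. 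Crucially, every reflection in $\mathscr A(w_0)$ is \emph{linear}: its fixed hyperplane contains $0$.

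For left-weightedness of $(w_0,\iota'_g)$ I need $\mathscr A(\partial(w_0)) \cap \mathscr A(\iota'_g) = \emptyset$. Here $\partial(w_0) = w_0^{-1}w = r_{\mu,1}^{-1} = r_{\mu,1}$, so $\mathscr A(\partial(w_0)) = \{r_{\mu,1}\}$. Now $\iota'_g = T_w(\iota_g)$ and $\mathscr A(\iota'_g) = T_w(\mathscr A(\iota_g))$; since $\iota_g$ is a product of the green linear reflections $r_{\alpha,0}$ with $\alpha\in\Xi'_g$, and $T_w$ sends a vertical reflection to a vertical reflection through a distinct parallel hyperplane (Definition \ref{D:Translation}), the fixed hyperplanes of reflections in $\mathscr A(\iota'_g)$ do not pass through $0$; in particular $r_{\mu,1}\notin\mathscr A(\iota'_g)$, which gives left-weightedness. (Alternatively: the fixed hyperplanes of the reflections in $\mathscr A(w_0)$ all contain $0$, while those in $\mathscr A(\iota'_g)$ do not — exactly the argument used at the end of Lemma \ref{L:Loxod1}.)

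For right-weightedness I need $\mathscr A(\partial^{-1}(\iota'_g)) \cap \mathscr A(w_0) = \emptyset$. Now $\partial^{-1}(\iota'_g) = w(\iota'_g)^{-1} = \iota'_b$ (since $w = \iota'_b\iota'_g$), so $\mathscr A(\partial^{-1}(\iota'_g)) = \mathscr A(\iota'_b) = \{r_{\mu,2}\} \sqcup \mathscr A'$ where, as in Lemma \ref{L:Loxod1}, $\mathscr A'$ consists of reflections through hyperplanes orthogonal to roots distinct from $\mu$ — and, being $T_w$-images of the linear reflections $r_{\alpha,0}$ with $\alpha\in\Xi'_b$, their fixed hyperplanes do not contain $0$. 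Meanwhile every reflection in $\mathscr A(w_0)$ has fixed hyperplane containing $0$. Hence the only possible coincidence would involve a reflection of $\mathscr A(w_0)$ whose hyperplane contains $0$ matching one of $\{r_{\mu,2}\}\sqcup\mathscr A'$ whose hyperplane does not — impossible. So $\mathscr A(\iota'_b)\cap\mathscr A(w_0)=\emptyset$ and right-weightedness follows.

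I expect the only subtlety, and the one point to state carefully, is the clean dichotomy ``linear vs.\ non-linear fixed hyperplane'': reflections appearing in $\mathscr A(w_0)$ fix a hyperplane through the origin, whereas reflections appearing in $\mathscr A(\iota'_b) = \mathscr A(\partial^{-1}(\iota'_g))$ and in $\mathscr A(\iota'_g)$ fix a hyperplane \emph{not} through the origin, precisely because applying $T_w$ to a (vertical) reflection pushes its hyperplane to a distinct parallel one (Definition \ref{D:Translation}). Once this is observed, both weightedness conditions are immediate, since the two starting sets to be compared are disjoint for the trivial reason that one consists entirely of linear reflections and the other of non-linear ones. No genuine computation is required beyond identifying $\partial(w_0)=r_{\mu,1}$ and $\partial^{-1}(\iota'_g)=\iota'_b$.
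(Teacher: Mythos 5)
Your right-weightedness argument is fine and is exactly the paper's: $\partial^{-1}(\iota'_g)=\iota'_b$, every reflection in $\mathscr A(w_0)$ fixes a hyperplane through the origin, no reflection in $\mathscr A(\iota'_b)$ does, hence the two sets are disjoint. The left-weightedness half, however, contains a genuine error: from $w_0=r_{\mu,1}w$ one gets $\partial^{-1}(w_0)=w\,w_0^{-1}=r_{\mu,1}$, i.e.\ $r_{\mu,1}$ is the \emph{left} complement of $w_0$ (this is what Lemma \ref{L:Loxod1} uses), whereas the \emph{right} complement is $\partial(w_0)=w_0^{-1}w=w_0^{-1}r_{\mu,1}w_0$, a conjugate of $r_{\mu,1}$, not $r_{\mu,1}$ itself (your line ``$\partial(w_0)=w_0^{-1}w=r_{\mu,1}^{-1}$'' would require $w_0=wr_{\mu,1}$). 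The paper identifies this conjugate explicitly: writing $w_0=\Pi_{\alpha\in\Xi'_b}r_{\alpha,0}\,\Pi_{\alpha\in\Xi'_g}r_{\alpha,0}$, all factors commute with $r_{\mu,1}$ except the one green reflection $r_{\alpha,0}$ attached to the white leaf of the extended Dynkin diagram, so $\partial(w_0)=r_{\alpha,0}r_{\mu,1}r_{\alpha,0}$, a reflection whose root $r_{\alpha,0}(\mu)$ lies outside $\Phi$; disjointness from $\mathscr A(\iota'_g)$ then follows because all roots of reflections in $\mathscr A(\iota'_g)$ lie in $\Xi'_g\subset\Phi$.

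Note also that your ``linear vs.\ non-linear hyperplane'' dichotomy cannot be repaired to cover this half: the fixed hyperplane of $\partial(w_0)=r_{\alpha,0}r_{\mu,1}r_{\alpha,0}$ is $r_{\alpha,0}(H_{\mu,1})$, which does \emph{not} contain the origin, just like the hyperplanes of the reflections in $\mathscr A(\iota'_g)$, so that criterion fails to separate them; one really needs the root argument. Finally, your parenthetical ``alternative'' compares $\mathscr A(w_0)$ with $\mathscr A(\iota'_g)$, but that is the left-weightedness condition for the pair $(\iota'_b,w_0)$ of Lemma \ref{L:Loxod1}, not for $(w_0,\iota'_g)$, whose condition is $\mathscr A(\partial(w_0))\cap\mathscr A(\iota'_g)=\emptyset$.
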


\begin{proof}
Let us describe $\partial(w_0)$. We claim that $\partial(w_0)$ is a reflection whose root is not in $\Phi$. 
 We have $w=r_{\mu,1}w_0=w_0(w_0^{-1}r_{\mu,1}w_0)$, whence $\partial(w_0)=w_0^{-1}r_{\mu,1}w_0$. But recall that 
$$w_0=\Pi_{\alpha\in\Xi'_b}r_{\alpha,0}\Pi_{\alpha\in\Xi'_g}r_{\alpha,0}.$$
In the first product, all reflections commute with $r_{\mu,1}$ and in the second (which is a product in which all reflections commute pairwise), all reflections commute with $r_{\mu,1}$, except one (as $r_{\mu,1}$ corresponds to a leaf in the extended Dynkin diagram).  Therefore, for some $\alpha\in \Xi'_g$, 
$w_0^{-1}r_{\mu,1}w_0=r_{\alpha,0}r_{\mu,1}r_{\alpha,0}$, which is a reflection whose root is not in $\Phi$. As all reflections in $\mathscr A(\iota'_g)$ have their roots in $\Xi'_g\subset \Phi$, we obtain $\mathscr A(\partial(w_0))\cap \mathscr A(\iota'_g)=\emptyset$, which shows left-weightedness. 

For right-weightedness, note that $\partial^{-1}(\iota'_g)=\iota'_b$. On the one hand, $\mathscr A(w_0)$ consists of reflections whose fixed hyperplane contains 0, on the other hand, $\mathscr A(\iota'_b)$ contains no reflection whose fixed hyperplane contains 0. Therefore $\mathscr A(\iota'_b)\cap \mathscr A(w_0)=\emptyset$ and we are done. 
\end{proof}

\begin{lemma}\label{L:Loxod3}
Let $r_v$ be a vertical reflection and let $\alpha\in \Xi$ be a vertical root. Then there is at most one $k\in \mathbb Z$ such that $r_{\alpha,k} r_v$ is a simple element. 
Also, there is at most one $l\in\mathbb Z$ such that $r_vr_{\alpha,l}$ is a simple element.
\end{lemma}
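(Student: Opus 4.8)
The claim is that for a fixed vertical reflection $r_v$ and a fixed vertical root $\alpha$, the "grid" of parallel reflections $r_{\alpha,k}$, $k\in\mathbb Z$, meets the simple elements in at most one left-multiple $r_{\alpha,k}r_v$ (and dually at most one right-multiple $r_vr_{\alpha,l}$). I would first reduce to the left statement, the right statement being entirely symmetric via the anti-automorphism exchanging left and right divisors (recall $[1,w]^C$ is balanced, so $r_{\alpha,l}$ and $r_v$ play symmetric roles). So suppose $r_{\alpha,k}r_v$ and $r_{\alpha,k'}r_v$ are both simple elements of $\mathcal S\cong[1,w]^C$ with $k\neq k'$; I want a contradiction.

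The key observation is a length computation. Since $r_v$ is a vertical reflection, $r_v\in[1,w]^W\subseteq[1,w]^C$ and $|r_v|=1$; if $r_{\alpha,k}r_v$ is a simple element of weight $2$ (i.e. a product of two reflections with lengths adding up), then in fact the pair $(r_{\alpha,k},r_v)$ is "unweighted" in the sense that $|r_{\alpha,k}r_v|_{\Isom}=2$, so $r_{\alpha,k}\preccurlyeq r_{\alpha,k}r_v\preccurlyeq w$ and thus $r_{\alpha,k}\in[1,w]^C$; being vertical, this forces $r_{\alpha,k}\in[1,w]^W$ by the description of $[1,w]^W$ (all vertical reflections lie in $[1,w]^W$). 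Then consider the element $g:=(r_{\alpha,k}r_v)^{-1}(r_{\alpha,k'}r_v)=r_v r_{\alpha,k}r_{\alpha,k'}r_v$. Using $r_{\alpha,k}r_{\alpha,k'}=t_{(k'-k)\alpha^{\vee}}$ (a nontrivial translation, since $k\neq k'$), $g$ is conjugate to a nontrivial translation, hence is itself a translation $t_\lambda$ with $\lambda=(k'-k)\cdot r_v(\alpha^{\vee})$ up to the obvious computation — in particular $g$ is hyperbolic with nonzero translation vector. On the other hand, if two simple elements $s=r_{\alpha,k}r_v$ and $s'=r_{\alpha,k'}r_v$ both lie in $[1,w]^C$, I should examine whether $s^{-1}s'$ being a translation is compatible with both lying below $w$; this is where I expect the real work to be.

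The cleanest route is probably via the move-sets / min-sets rather than brute force. The reflection $r_v=r_{\beta,c}$ for some vertical root $\beta$ and the translation $r_{\alpha,k}r_v$ being simple means (by the interval description, Proposition on elements of $[1,w]^W$, and $|w|=n$) that its reflection length is exactly $2$. I would argue: the isometry $s=r_{\alpha,k}r_v$ is a composition of the two reflections $r_{\alpha,k}$ and $r_v$ through non-parallel hyperplanes (their roots $\alpha,\beta$ are distinct lines since $\alpha$ is a fixed root and $r_v$ ranges — wait, $\alpha$ and $\beta$ could be proportional), so I must split cases: if $\beta$ is not proportional to $\alpha$, then $s$ is elliptic with $\Min(s)=H_{\alpha,k}\cap H_{\beta,c}$, a codimension-$2$ affine subspace; distinct values of $k$ give distinct (parallel) such subspaces. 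Now if both $s$ and $s'$ are simple and elliptic with $s,s'\preccurlyeq w$, I would derive a constraint forcing $\Min(s)=\Min(s')$ — for instance using that $w$ is a Coxeter element whose move-set is a single affine hyperplane, so all elliptic $u\preccurlyeq w$ have $\Min(u)\supseteq$ some fixed affine subspace (the min-set of $w$, or a point on the Coxeter axis), which a family of distinct parallel codimension-$2$ subspaces cannot all contain; contradiction. If $\beta$ is proportional to $\alpha$, then $r_{\alpha,k}r_v$ is itself a translation in the direction $\alpha^{\vee}$, and the set of translations in $[1,w]^C$ that are honest translations in a single root direction is constrained by the horizontal factorization description — again forcing uniqueness of $k$.

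**Main obstacle.** The delicate point is the case analysis on whether $\alpha$ is proportional to the root of $r_v$, and, in the non-proportional elliptic case, pinning down exactly which fixed subspace every elliptic $u\preccurlyeq w$ must contain so that "a family of distinct parallel codimension-$2$ subspaces cannot all contain it" is airtight — this presumably uses that a Coxeter element's move-set is a non-linear affine hyperplane together with Proposition \ref{P:Poset}(ii) relating $\preccurlyeq$ to reverse inclusion of min-sets. I would expect the argument to run: $s\preccurlyeq w$ elliptic and $w$ hyperbolic force $s^{-1}w$ to be hyperbolic too, and then a translation component argument shows $\Min(s)$ meets $\Mov(w)$ in a prescribed way; distinct $k$'s violate this. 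Carefully extracting that constraint from \cite{BMcC}'s poset theorem is the step I'd allocate the most care to; everything else (the translation identities $r_{\alpha,k}r_{\alpha,k'}=t_{(k'-k)\alpha^\vee}$, reducing right to left, $|r_v|=1$) is routine.
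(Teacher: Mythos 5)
Your reduction of the right-hand statement to the left-hand one and the identity $r_{\alpha,k}r_{\alpha,k'}=t_{(k'-k)\alpha^{\vee}}$ are fine, but the step your whole argument hangs on is false: it is not true that every elliptic $u\preccurlyeq w$ has $\Min(u)$ containing some fixed affine subspace (a point of the Coxeter axis, say). Indeed \emph{all} vertical reflections lie in $[1,w]^W$, so $r_{\mu,i}\preccurlyeq w$ for every $i\in\mathbb Z$, and these are elliptic simple elements whose min-sets are distinct parallel hyperplanes with empty common intersection. For the same reason, the mere fact that $s=r_{\alpha,k}r_v$ and $s'=r_{\alpha,k'}r_v$ would be elliptic simple elements with distinct parallel codimension-2 min-sets is not by itself contradictory: pairs of simple elliptic elements with distinct parallel min-sets abound, so no contradiction can be extracted from the min-sets of the two products alone, and Proposition \ref{P:Poset}(ii) only compares \emph{comparable} elliptics. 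Your closing heuristic that ``$s\preccurlyeq w$ elliptic and $w$ hyperbolic force $s^{-1}w$ to be hyperbolic too'' is also wrong, and in fact the truth is exactly the opposite and is the missing ingredient.

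The paper argues through the complement of $r_v$ rather than through the products: for atoms, $r_{\alpha,k}r_v$ is simple if and only if $r_{\alpha,k}\in\mathscr A(\partial^{-1}(r_v))$, where $\partial^{-1}(r_v)=wr_v$ (and $r_vr_{\alpha,l}$ is simple iff $r_{\alpha,l}\in\mathscr A(\partial(r_v))$ with $\partial(r_v)=r_vw$). By \cite[Lemma 9.3]{McCFailure}, since $r_v$ is \emph{vertical}, $\partial^{-1}(r_v)$ and $\partial(r_v)$ are elliptic isometries whose min-set is a single point; then Lemma \ref{L:EllipticStartingSet} identifies $\mathscr A(\partial^{-1}(r_v))$ with the set of reflections whose fixed hyperplane contains that point, and at most one hyperplane of the parallel family $\{H_{\alpha,k}\}_{k\in\mathbb Z}$ can contain it. Without this structural input (or an equivalent substitute) your approach does not close; with it, the case analysis on whether $\alpha$ is proportional to the root of $r_v$, and all consideration of the move-set of $w$, become unnecessary.
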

\begin{proof}
First, observe that for a pair of \emph{atoms} $r,r'$, $rr'$ is a simple element if and only if $r\in\mathscr A (\partial^{-1}(r'))$ if and only if $r'\in\mathscr A(\partial (r))$. 
By \cite[Lemma 9.3]{McCFailure}, $\partial^{-1}(r_v)$ is an elliptic isometry whose min-set is just a point.
There is at most one $k\in \mathbb Z$ such that $H_{\alpha,k}$ contains this point, that is, there is at most one
$k\in\mathbb Z$ such that $r_{\alpha,k}\in \mathscr A(\partial^{-1}(r_v))$. Similarly, $\partial (r_v)$ is an elliptic isometry whose min-set is just a point. 
There is at most one $l\in \mathbb Z$ such that $H_{\alpha,l}$ contains this point, that is, there is at most one $l\in \mathbb Z$ such that $r_{\alpha,l} \in \mathscr A(\partial(r_v))$. 
\end{proof}

\begin{lemma}\label{L:Loxod4}
There is a vertical reflection $r_0$ such that both  $(r_0,\iota'_b)$ and $(\iota'_g,r_0)$ are left and right-weighted. 
\end{lemma}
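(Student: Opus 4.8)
The plan is to find a vertical reflection $r_0$ whose fixed hyperplane is positioned so that it cannot be "absorbed" on either side by the simple elements $\iota'_b$ and $\iota'_g$. Recall $\mathscr A(\iota_b)=\{r_{\mu,1}\}\sqcup\{r_{\alpha,0},\alpha\in\Xi'_b\}$ and $\mathscr A(\iota_g)=\{r_{\alpha,0},\alpha\in\Xi'_g\}$, so applying $T_w$ gives $\mathscr A(\iota'_b)=\{r_{\mu,2}\}\sqcup T_w(\{r_{\alpha,0},\alpha\in\Xi'_b\})$ and $\mathscr A(\iota'_g)=T_w(\{r_{\alpha,0},\alpha\in\Xi'_g\})$, all of these being vertical reflections through specific hyperplanes. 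Unwinding the definitions (as in the paragraph preceding Lemma~\ref{L:Loxod1}): $(r_0,\iota'_b)$ is right-weighted iff $\mathscr A(\partial^{-1}(r_0))\cap\mathscr A(\iota'_b)=\emptyset$ and left-weighted iff $\mathscr A(\iota'_b)\cap\mathscr A(\partial(r_0))=\emptyset$ — wait, more precisely $(r_0,\iota'_b)$ is left-weighted iff $\mathscr A(\partial(r_0))\cap\mathscr A(\iota'_b)=\emptyset$, and $(\iota'_g,r_0)$ is left-weighted iff $\mathscr A(\partial(\iota'_g))\cap\mathscr A(r_0)=\emptyset$ and right-weighted iff $\mathscr A(\partial^{-1}(r_0))\cap\mathscr A(\iota'_g)=\emptyset$. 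Since $r_0$ is an atom, $\mathscr A(r_0)=\{r_0\}$, so two of these four conditions just say $r_0\notin\mathscr A(\partial(\iota'_g))$ and $r_0\notin\mathscr A(\partial^{-1}(\iota'_b))$; but $\partial(\iota'_g)=\iota'_b$ after cyclic shift — actually $w=\iota'_b\iota'_g$ gives $\partial(\iota'_b)=\iota'_g$, and $\partial(\iota'_g)=\partial(\partial(\iota'_b))=\tau(\iota'_b)$, while $\partial^{-1}(\iota'_b)=\tau^{-1}(\iota'_g)$. So those two conditions say $r_0$ avoids two explicit finite sets of vertical reflections.

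The remaining two conditions, $\mathscr A(\partial(r_0))\cap\mathscr A(\iota'_b)=\emptyset$ and $\mathscr A(\partial^{-1}(r_0))\cap\mathscr A(\iota'_g)=\emptyset$, are where Lemma~\ref{L:Loxod3} enters. By \cite[Lemma 9.3]{McCFailure}, $\partial(r_0)$ and $\partial^{-1}(r_0)$ are elliptic isometries whose min-sets are single points, say $P^+$ and $P^-$. Then $\mathscr A(\partial(r_0))$ is the set of reflections in $W$ whose fixed hyperplane passes through $P^+$, and similarly for $\partial^{-1}(r_0)$ and $P^-$. So the conditions become: no hyperplane $H_{\alpha,c}$ with $r_{\alpha,c}\in\mathscr A(\iota'_b)$ passes through $P^+$, and no hyperplane with $r_{\alpha,c}\in\mathscr A(\iota'_g)$ passes through $P^-$. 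The idea is that as we vary $r_0$ over the (infinitely many) vertical reflections $r_{\alpha,k}$ for a fixed vertical root $\alpha$ and $k\in\mathbb Z$, the points $P^+=P^+(k)$ and $P^-=P^-(k)$ move; for each of the finitely many "bad" hyperplanes $H$ in $\mathscr A(\iota'_b)\cup\mathscr A(\iota'_g)$, Lemma~\ref{L:Loxod3} (or rather the argument in its proof: $H$ is vertical, $P^\pm(k)$ traces out a vertical line as $k$ varies, so $H$ contains $P^\pm(k)$ for at most one value of $k$) rules out only finitely many values of $k$. Likewise the two "avoid a finite set" conditions on $r_0$ itself rule out finitely many $k$. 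Since $\mathbb Z$ is infinite, some $k$ survives all finitely many exclusions, and $r_0=r_{\alpha,k}$ works.

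First I would fix a convenient vertical root $\alpha\in\Xi$ (for instance $\alpha=\mu$, or any simple root, all of which are vertical by Lemma~\ref{L:SimpleNotHorizontal}) and consider the family $r_{\alpha,k}$, $k\in\mathbb Z$, of candidate reflections $r_0$. I would then spell out the four weightedness conditions in terms of starting sets as above, reducing to: (a) $r_{\alpha,k}\notin\mathscr A(\tau(\iota'_b))$; (b) $r_{\alpha,k}\notin\mathscr A(\tau^{-1}(\iota'_g))$ — more carefully, the correct cyclically-shifted neighbours — each excluding at most one $k$ by Lemma~\ref{L:Loxod3}; and (c),(d): for each of the finitely many reflections $r'\in\mathscr A(\iota'_b)\cup\mathscr A(\iota'_g)$ (all vertical, with vertical roots), the condition "$r'\in\mathscr A(\partial(r_{\alpha,k}))$'' resp. "$r'\in\mathscr A(\partial^{-1}(r_{\alpha,k}))$'' holds for at most one $k$, by the same single-point min-set argument used to prove Lemma~\ref{L:Loxod3} (the roles of the fixed root $\alpha$ and the varying root of $r'$ being swapped, but the argument is symmetric: the min-set of $\partial(r_{\alpha,k})$ moves along a line transverse to each fixed vertical hyperplane). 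The main obstacle — and the only point needing genuine care — is verifying this last symmetric form of the counting argument, i.e. that the single point $\Min(\partial(r_{\alpha,k}))$ really does sweep out a line (equivalently, that it is not eventually trapped in some hyperplane $H$); this should follow because the $r_{\alpha,k}$ are translates of one another under the vertical translation $w^{e_0}$, so $\Min(\partial(r_{\alpha,k}))$ differs from $\Min(\partial(r_{\alpha,k'}))$ by a vertical translation, while the bad hyperplanes $H$ are vertical and hence meet a vertical line in at most one point. Finally, intersecting all the finitely many exclusions leaves infinitely many valid $k$, and picking any one of them yields the desired $r_0$.
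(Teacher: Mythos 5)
Your proposal follows the same strategy as the paper's proof: fix a vertical root $\alpha$, let $r_0$ run through the family $r_{\alpha,k}$, $k\in\mathbb Z$, check that each weightedness constraint excludes only finitely many values of $k$, and pick a surviving $k$. After your mid-sentence self-correction, the four starting-set conditions are the right ones; note that your conditions (a) and (b) are actually redundant, since for an atom $r_0$ the left-weightedness of $(r_0,\iota'_b)$ (your (c)) already forces $r_0\iota'_b$ not to be simple, i.e.\ $r_0\notin\mathscr A(\partial^{-1}(\iota'_b))$, and symmetrically (d) implies (a); the paper enforces only (c) and (d).

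The one step that is off as written is your counting for (c) and (d). The ``transposed'' version of Lemma \ref{L:Loxod3} you flag as the delicate point is unnecessary: for atoms, $r'\in\mathscr A(\partial(r_{\alpha,k}))$ iff $r_{\alpha,k}r'$ is simple iff $r_{\alpha,k}\in\mathscr A(\partial^{-1}(r'))$, so Lemma \ref{L:Loxod3} applies verbatim with the \emph{fixed} reflection $r'\in\mathscr A(\iota'_b)$ (resp.\ $s'\in\mathscr A(\iota'_g)$) in the role of $r_v$ and with $k$ varying: the single point $\Min(\partial^{-1}(r'))$ is fixed, and at most one of the pairwise disjoint parallel hyperplanes $H_{\alpha,k}$ contains it --- this is exactly the paper's argument. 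Moreover, your justification of the transposed version contains an inaccuracy: the $r_{\alpha,k}$ are \emph{not} all $w^{e_0}$-translates of one another; rather $T_w(r_{\alpha,k})=r_{\alpha,k+c}$ with $c=e_0\langle\gamma_0,\alpha\rangle$ a fixed nonzero integer, so the points $\Min(\partial(r_{\alpha,k}))$ sweep out $|c|$ vertical lines (one per residue class of $k$ modulo $c$), not a single one, and a fixed vertical hyperplane may contain such a point for up to $|c|$ values of $k$, not ``at most one''. Since all you need is ``finitely many excluded $k$ per hyperplane'', your conclusion survives, but this step should either be repaired along these lines or, better, replaced by the direct application of Lemma \ref{L:Loxod3}. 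Finally, (a) and (b) are not instances of Lemma \ref{L:Loxod3} either; they exclude at most one $k$ each simply because $\mathscr A(\partial(\iota'_g))$ and $\mathscr A(\partial^{-1}(\iota'_b))$ are fixed finite sets of reflections whose hyperplanes all contain a fixed nonempty min-set, which meets at most one member of the disjoint family $\{H_{\alpha,k}\}_{k\in\mathbb Z}$.
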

\begin{proof}
Fix a vertical root $\alpha$. Let $r_1, \ldots, r_p$ be an enumeration of $\mathscr A(\iota'_b)$ and let $s_1,\ldots, s_q$ be an enumeration of $\mathscr A(\iota'_g)$. By Lemma \ref{L:SimpleNotHorizontal} (and Definition \ref{D:Translation}), all these reflections are vertical. By Lemma \ref{L:Loxod3}, for each $i=1,\ldots,p$ and each $j=1,\ldots,q$, there is at most one 
$k_i$ such that $r_{\alpha, k_i}r_i$ is simple and at most one $l_j$ such that $s_jr_{\alpha,l_j}$ is a simple element. If we choose $m_0\notin\{k_1,\ldots,k_p,l_1,\ldots,l_q\}$, and $r_0=r_{\alpha,m_0}$, then $(r_{0}, \iota'_b)$ is left and right-weighted and $(\iota'_g,r_{0})$ is left and right-weighted.\end{proof}

\section{Proof of Theorem A}\label{S:Proofs}

In this section, an Artin-Tits group $A$ of euclidean type distinct from the affine braid group is fixed. We keep notations from the previous sections with the following exception. As it is a standard notation for the Garside element in Garside groups, we will use the letter $\Delta$ for the Garside element of $\mathfrak C$ --this is the same that was denoted above by $w$.

\begin{definition}\label{D:Loxod}
For the remaining of the paper, we define the following element of $\mathfrak C$, which is also an element of $A$. Let $r_0$ be as in Lemma \ref{L:Loxod4}.  
Define 
$$x= r_0\cdot \iota'_b\cdot w_0 \cdot \iota'_g \cdot r_0.$$
\end{definition}

First, we gather some facts about the element $x$. 
For any $g\in \mathfrak C$ with $\inf(g)=0$, we denote $\partial(g)=g^{-1}\Delta^{\sup(g)}$ --this matches the notation for the right complement of a simple element $s$ (in which case $\sup(s)=1$). 

\begin{proposition}\label{P:LoxodromicX}

\begin{itemize}
\item[(i)] The left and right normal form of $x$ are the same and we just call it the ``normal form''; this normal form is given by the formula in Definition \ref{D:Loxod}. 
\item[(ii)] The first and last factor of the normal form of $x$ coincide; thus $x$ is \emph{rigid}: for every $m\in \mathbb N$, the left --and right-- normal form of $x^m$ is the concatenation of $m$ copies of the normal form of $x$. 
\item[(iii)] Both normal forms of $x$ and $\partial(x)$ contain a factor which is the right complement of a reflection; the elements $x$ and $\partial(x)$ are not absorbable. 
\item[(iv)] For each $m\geqslant 0$, $\partial(x^m)=\Pi_{i=0}^{m-1} \tau^{5i}(\partial(x))$ and this is in left and right normal form as written. 
\item[(v)] No non-trivial power of $\Delta$ commutes with $x$. 
\end{itemize}
\end{proposition}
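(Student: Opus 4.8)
The plan is to verify the five assertions essentially in order, using the weightedness lemmas of Section \ref{S:Loxodromic} and the structural facts about rigid elements from Garside theory.

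For (i), I would simply observe that $x$ is written as a product of five proper simple elements $r_0,\iota'_b,w_0,\iota'_g,r_0$ (each is in $[1,w]^C$: $r_0,\iota'_b,\iota'_g$ are vertical reflections or involutions, $w_0$ is a Coxeter element of $W_0$, all of reflection length $<|w|_W$). By Lemmas \ref{L:Loxod1}, \ref{L:Loxod2} and \ref{L:Loxod4}, each consecutive pair $(r_0,\iota'_b)$, $(\iota'_b,w_0)$, $(w_0,\iota'_g)$, $(\iota'_g,r_0)$ is left-weighted, so by uniqueness of left normal forms this product \emph{is} the left normal form of $x$; since each consecutive pair is also right-weighted, the same product is the right normal form. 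In particular $\inf(x)=0$, $\ell(x)=5$, $\sup(x)=5$. For (ii): the first and last factor are both $r_0$, which is the definition of a rigid element; I would invoke the standard fact that for a rigid element the left (and right) normal form of $x^m$ is the $m$-fold concatenation of that of $x$ (this is immediate once one checks that $(r_0,r_0)$ being the junction means left-weightedness is preserved across copies, which follows because $(\iota'_g,r_0)$ and $(r_0,\iota'_b)$ are already known to be left-weighted and applying $\tau$ to the second-to-last pair is harmless; alternatively cite the characterisation of rigidity).

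For (iii), the normal form of $x$ visibly contains $r_0=\partial(\text{something})$? — more precisely I want a factor which is the \emph{right complement of a reflection}: since $r_0$ is a reflection and $\sup(r_0)=1$, in fact I should look instead at $\iota'_b,w_0,\iota'_g$, none of which is a priori such a complement; the cleanest route is to note that $\partial(x)=x^{-1}\Delta^{5}$, and that for $x$ rigid with normal form $s_1\cdots s_5$ one has $\partial(x)=\tau^{4}(\partial s_1)\cdot\tau^{3}(\partial s_2)\cdots\partial s_5$ — so its first factor is $\tau^4(\partial r_0)$, the $\tau$-image of the right complement of the reflection $r_0$. For absorbability I would use Lemma \ref{L:AbsorbableSubword}: if $x$ were absorbable then every factor of it would be, in particular $\iota'_b$ alone would be absorbable, and then by the definition of absorbable there is $h$ with $\inf(h\iota'_b)=\inf h$, $\sup(h\iota'_b)=\sup h$ — but a proper simple element which is a product of reflections with fixed hyperplanes not all through a common point (equivalently whose normal form as an atom-product has length $>1$, or: whose right complement still contains a reflection in its starting set) cannot be absorbed, since absorbing would force the canonical length not to grow, contradicting that $\mathscr A(\partial(\iota'_b))$ meets $\mathscr A$ of the following factor. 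I expect this is the step needing the most care, and I would organise it around the principle: \emph{an element both of whose normal form and whose $\partial$-normal form contain a factor that is a complement of a reflection is not absorbable}, proving the bracketed principle via the defining inequalities $\inf(hg)=\inf h$, $\sup(hg)=\sup h$ and the left-normal-form product rule.

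For (iv), since $x$ is rigid with $\sup(x)=5$, conjugation by $\Delta$ shifts simple elements by $\tau$, and $\partial(x^m)=(x^m)^{-1}\Delta^{5m}$; writing $x^m=s_1\cdots s_{5m}$ (the $m$-fold concatenation) and using $s_i\Delta=\Delta\tau(s_i)$ repeatedly, one computes $\partial(x^m)=\prod_{i=0}^{m-1}\tau^{5i}(\partial(x))$, and the weightedness of this product (hence that it is in normal form) reduces to the weightedness of the junction between $\tau^{5i}(\partial x)$ and $\tau^{5(i+1)}(\partial x)$, which is the $\tau^{5i}$-image of the junction inside $\partial(x^2)$; so it suffices to treat $m=2$, and there the last factor of $\partial(x)$ is $\partial r_0$ composed-into... — concretely the junction pair is $(\partial s_{5m}$ of one block, $\tau^{5}(\partial s_1)$ of the next$)=(\partial r_0,\tau^{5}(\partial r_0))$ up to $\tau$-powers, and its weightedness follows from Lemma \ref{L:Loxod3}/\ref{L:Loxod4} applied to the vertical reflection $r_0$ and the vertical root $\alpha$ (distinct translates give disjoint starting sets). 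Finally (v): if $\Delta^k$ commuted with $x$ for some $k\neq 0$, then $x\Delta^k=\Delta^k x$ forces $\tau^{|k|}$ to fix the normal form of $x$ (up to the shift), in particular $\tau^{|k|}(r_0)=r_0$; but $\tau$ on $[1,w]^C$ is conjugation by $\Delta=w$, and $T_w=\tau^{e_0}$-type conjugation moves a vertical reflection to a vertical reflection through a \emph{distinct} parallel hyperplane (Definition \ref{D:Translation}), so no non-trivial power of $\tau$ fixes $r_0$ — contradiction. I would phrase (v) slightly more carefully using that $w^{e_0}$ is a genuine vertical translation of infinite order, so $\langle\Delta\rangle$ is infinite and acts on the set of hyperplanes with the relevant reflection having infinite orbit.
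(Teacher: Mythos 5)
Your parts (i), (ii), (iv) and (v) follow essentially the paper's route (for (iv) the paper simply invokes the general Garside fact that if $x_1\cdots x_q$ is a left and right normal form then so is $\partial(x_q)\,\tau(\partial x_{q-1})\cdots\tau^{q-1}(\partial x_1)$, rather than re-checking junctions by hand), but part (iii) --- the only part with real content --- has genuine gaps. First, you never establish that the normal form of $x$ itself contains a factor which is the right complement of a reflection; in fact you assert that none of $\iota'_b,w_0,\iota'_g$ is ``a priori such a complement'', whereas $w_0$ is exactly one: $w=r_{\mu,1}w_0$, so $w_0=\partial(r_{\mu,1})$ (this identity is already used in the proof of Lemma \ref{L:Loxod1}). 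Second, and more seriously, your non-absorbability argument would fail. You propose to show that $\iota'_b$ is not absorbable via the criterion ``a proper simple element which is a product of reflections with fixed hyperplanes not all through a common point cannot be absorbed'': this does not even apply to $\iota'_b$, which is elliptic (its reflections commute and their hyperplanes do meet), it is not a correct criterion in general (simple elements that are products of several atoms are typically absorbable), and the clause ``contradicting that $\mathscr A(\partial(\iota'_b))$ meets $\mathscr A$ of the following factor'' conflates absorbability of the single simple element $\iota'_b$ (a property quantified over an arbitrary absorbing element $h$) with weightedness inside the normal form of $x$.

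The paper's proof of (iii) is precisely where the quantitative structure enters: $w_0$ (and likewise $\partial(r_0)$, which occurs as a factor of the normal form of $\partial(x)$) is not absorbable because its weight is $\rho(w_0)=n=\rho(\Delta)-1$. If $w_0$ were absorbable there would be a simple $s\neq 1$ with $w_0s$ a proper simple element, forcing $\rho(s)<1$; the only atoms of weight $<1$ are the factored translations of weight $\tfrac{2}{3}$ (when $k_0=3$), and then $\rho(\partial(w_0s))=\tfrac{1}{3}$, which is impossible since no simple element has weight $\tfrac{1}{3}$. Lemma \ref{L:AbsorbableSubword} then transfers non-absorbability to $x$ and $\partial(x)$. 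Your closing ``principle'' (an element whose normal form contains a complement of a reflection is not absorbable) is indeed the right statement, but you give no proof of it, and any proof must use the weight function of Remark \ref{R:Weight} and in particular handle the weight-$\tfrac{2}{3}$ atoms; this is the missing idea. Two smaller points: in (ii), left-weightedness of the junction pair $(r_0,r_0)$ does not follow from that of $(\iota'_g,r_0)$ and $(r_0,\iota'_b)$ --- it holds because $r_0\cdot r_0$ is not a simple element (since $r_0^2=1$ in $C$ while weights add); and your formula for the normal form of $\partial(x)$ has the factors in the wrong order --- it is $\partial(s_5)\,\tau(\partial s_4)\cdots\tau^{4}(\partial s_1)$, not $\tau^{4}(\partial s_1)\cdots\partial(s_5)$ (harmless for your purpose only because $s_1=s_5=r_0$).
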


\begin{proof}
(i) follows from Lemmas \ref{L:Loxod1}, \ref{L:Loxod2} and \ref{L:Loxod4}; (ii) is immediate. (iii) To see that $x$ is not absorbable, it suffices to notice  that $w_0$ is not absorbable and to use Lemma \ref{L:AbsorbableSubword}. Recall that $\rho$ is the weight function of the monoid $M_w^C$ (Remark \ref{R:Weight}). 
If $w_0$ was absorbable, we would have some $s\in \mathcal S$ such that $w_0s$ is a \emph{proper} simple element. We then would have $\rho(w_0s)=\rho(w_0)+\rho(s)=n+\rho(s)<n+1$. Then $\rho(s)<1$ and the only possibility is that $k_0=3$ and $\rho(s)=\frac{2}{3}$. But then $\rho(\partial(w_0s))=\frac{1}{3}$, which is impossible since there is no simple element with weight $\frac{1}{3}$. For the same reason, $\partial (r_0)$ is not absorbable and $\partial (x)$ is not absorbable. 
(iv) In any Garside group, if $x_1\ldots x_q$ is a left and right normal form, then $\partial(x_q)\ldots \tau^{q-1}(\partial(x_1))$ is also a left and right normal form. (v) Otherwise, there would be some power $l\neq 0$ of $\Delta$ commuting with $r_0$ (see \cite[Proposition 2.14]{McCS}) and hence $r_0$ would also commute with $\Delta^{le_0}$ ($e_0$ is given in Definition \ref{D:Translation}), which is impossible as $r_0$ is vertical and all isometries $T_w^k(r_0), k\in \mathbb Z$ are distinct (Definition \ref{D:Translation}).  
\end{proof}

\subsection{Loxodromic}

\begin{theorem}\label{T:Loxodromic}
The element $x$ acts in a loxodromic way on the additional length graph $\mathcal C_{AL}(\mathfrak C)$. More precisely, $d_{AL}(\ast, X^k)\geqslant \frac{|k|}{2}$ for all $k$ in $\mathbb Z$. As a consequence, $\mathcal C_{AL}(\mathfrak C)$ has infinite diameter. 
\end{theorem}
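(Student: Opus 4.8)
The plan is to establish the lower bound $d_{AL}(\ast,X^k)\geqslant \frac{|k|}{2}$ by combining the rigidity of $x$ (Proposition~\ref{P:LoxodromicX}(ii)) with the quasi-geodesic property of preferred paths (Theorem~\ref{T:CAL}), together with a counting argument based on the weight function $\rho$. The key structural input is that because $x$ is rigid with normal form of length $5$, the normal form of $x^m$ is the concatenation of $m$ copies of the normal form of $x$, and the preferred path $\mathcal A(\ast,X^m)$ therefore has length exactly $5m$; crucially, none of its $5m$ vertices are joined by a shortcut coming from an absorbable element, because the individual factors $r_0$, $\iota'_b$, $w_0$, $\iota'_g$, $r_0$ are not absorbable, and by Lemma~\ref{L:AbsorbableSubword} no product of consecutive factors within (and bridging copies of) the normal form can be absorbable either, using the $\rho$-weight obstruction from the proof of Proposition~\ref{P:LoxodromicX}(iii).

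First I would recall that for any vertex $V$ with distinguished representative of canonical length $r$, the preferred path $\mathcal A(\ast,V)$ has $r$ edges. Taking $V=X^m$, we have $\ell(\underline{X^m})=5m$ (Proposition~\ref{P:LoxodromicX}(ii)), so $\mathcal A(\ast,X^m)$ has combinatorial length $5m$. Next, since $\mathcal A(\ast,X^m)$ is a $(\lambda,\mu)$-unparameterized quasi-geodesic and lies at Hausdorff distance at most $39$ from any geodesic $[\ast,X^m]$ (Theorem~\ref{T:CAL}), the combinatorial length $5m$ of the preferred path cannot exceed a linear function of $d_{AL}(\ast,X^m)$ — PROVIDED one rules out long ``backtracking'' along the preferred path, which is exactly where absorbable-edge shortcuts would be dangerous. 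The point is that consecutive vertices $V_{i},V_{i+1}$ along $\mathcal A(\ast,X^m)$ are connected by an edge of the simple-element type (each step multiplies by one proper simple factor of the normal form), but we must verify that no $V_i$ and $V_j$ with $|i-j|$ large are joined by a single absorbable edge; if they were, the preferred path would fail to track geodesics efficiently. Here I would invoke that any absorbable element $g$ with $\underline{V_i}g\in V_j$ would, when combined with pieces of the rigid normal form, produce an absorbable subword of a product of the building blocks of $x$, contradicting Proposition~\ref{P:LoxodromicX}(iii) via Lemma~\ref{L:AbsorbableSubword} and the $\rho$-weight argument (no simple element has weight $\frac{1}{3}$, and $w_0$, $\partial(w_0)$, $r_0$, $\partial(r_0)$ are not absorbable).

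Putting these together: the preferred path has length $5m$ with no absorbable shortcuts, hence it is already (up to the uniform quasi-geodesic constants) close to a geodesic, so $d_{AL}(\ast,X^m)\geqslant \frac{5m}{C}$ for the relevant constant; a more careful bookkeeping — grouping the five factors of each copy and noting each copy contributes at least $\frac{1}{2}$ to the distance, or directly tracking how many simple-edges a geodesic can absorb — yields the stated $\frac{m}{2}$. By Proposition~\ref{P:Preferred}(ii) the path $\mathcal A(X^{-m},\ast)$ is the reverse of $\mathcal A(\ast,X^{-m})$ and the same argument (applied to $x^{-1}$, whose normal form is obtained from Proposition~\ref{P:LoxodromicX}(iv)) gives the bound for negative $k$; alternatively one uses the $\mathfrak C$-equivariance of preferred paths (Proposition~\ref{P:Preferred}(iii)). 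Finally, infinite diameter of $\mathcal C_{AL}(\mathfrak C)$ is immediate since $d_{AL}(\ast,X^k)\to\infty$. The main obstacle I anticipate is the absorbable-shortcut exclusion: one must argue carefully that \emph{every} potential absorbable edge between vertices on the preferred path of $x^m$ would force an absorbable subword among the concatenated building blocks, and that the weight function $\rho$ (taking values in $\mathbb N\cup\frac{2}{3}\mathbb N\cup\{0\}$) genuinely obstructs this — this is the crux, and it is exactly the content that Proposition~\ref{P:LoxodromicX}(iii) was set up to provide.
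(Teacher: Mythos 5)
Your plan has a genuine gap at its core step. Theorem~\ref{T:CAL} only says that the preferred path $\mathcal A(\ast,X^m)$ lies within Hausdorff distance $39$ of any geodesic from $\ast$ to $X^m$; this gives \emph{no} lower bound on $d_{AL}(\ast,X^m)$ in terms of the combinatorial length $5m$ of the preferred path, because a path can have arbitrarily large length while wandering inside a bounded neighbourhood of a very short geodesic. You acknowledge this by trying to ``rule out backtracking'', but the mechanism you propose --- excluding a single absorbable edge between two vertices $V_i,V_j$ of the preferred path --- is far too weak: closeness of $V_i$ and $V_j$ in $\mathcal C_{AL}(\mathfrak C)$ could come from a short path of several edges through vertices not on $\mathcal A(\ast,X^m)$, or from a simple-element edge, and none of these is touched by your argument. (Even the single-edge exclusion is not quite what you claim: an edge between $V_i$ and $V_j$ means $\underline{V_i}^{-1}\underline{V_j}\Delta^t$ is simple or absorbable for some $t\in\mathbb Z$, which is not in general a positive subword of the rigid normal form of $x^m$, so Lemma~\ref{L:AbsorbableSubword} and the $\rho$-weight obstruction of Proposition~\ref{P:LoxodromicX}(iii) do not apply directly.) Consequently the final bookkeeping ``each copy of the normal form contributes at least $\frac12$'' has no supporting estimate; in effect you are assuming the preferred path is a parametrized quasi-geodesic, which is essentially the statement to be proved.

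What is missing is a quantity that is controlled along \emph{every} edge of the graph, so that one can lower-bound the length of an \emph{arbitrary} path from $\ast$ to $X^k$. This is exactly the role of the projection $\Lambda$ of Lemma~\ref{L:Projection}: following \cite[Proposition 5]{CalvezWiestAH}, one shows (using the rigidity of $x$ and the non-absorbability of $x$ and $\partial(x)$ from Proposition~\ref{P:LoxodromicX}) that $|\Lambda(V_1)-\Lambda(V_2)|\leqslant 2$ whenever $V_1,V_2$ are adjacent, whether the edge comes from a proper simple element or from an absorbable element. Since $\Lambda(\ast)=0$ and $\Lambda(X^k)=k$, every path from $\ast$ to $X^k$ must have at least $\frac{|k|}{2}$ edges, which is precisely the stated bound. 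Your ingredients (rigidity, Proposition~\ref{P:LoxodromicX}(iii), the weight function $\rho$) are the right ones, but they must be fed into an edge-wise Lipschitz estimate for $\Lambda$ rather than into a shortcut-exclusion along the preferred path.
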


Here, $X^k$, $k\in \mathbb Z$ stands for the vertex $x^k\Delta^{\mathbb Z}=x^k\cdot \ast$ of $\mathcal C_{AL}(\mathfrak C)$. 
Throughout, we shall use the symbol $\preccurlyeq$ for the order on $\mathfrak C$ which extends the order on $M_w^C$, itself extending the order on $[1,w]^C$: for $g,h\in \mathfrak C$, $g\preccurlyeq h$ if and only if $g^{-1}h\in M_w^C$. 
The main tool for the proof is a projection map from $\mathcal C_{AL}(\mathfrak C)$ to $\{X^k,k\in \mathbb Z\}$:  

\begin{lemma}  {\rm{See \cite[Definition 3]{CalvezWiestAH}.}} \label{L:Projection}
There is a well-defined map $\Lambda$ from the set of vertices of $\mathcal C_{AL}(\mathfrak C)$ to $\mathbb Z$ given by the formula 
$$\Lambda(V)=-\max\{k\in \mathbb Z,\ x\not\preccurlyeq \underline{x^k\cdot V}\}.$$
This yields a projection 
from the set of vertices of $\mathcal C_{AL}(\mathfrak C)$ onto the set of vertices $\{X^k, k\in \mathbb Z\}$, given by $V\mapsto X^{\Lambda(V)}$.
\end{lemma}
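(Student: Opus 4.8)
The plan is to establish Lemma \ref{L:Projection} in two stages: first show that the defining formula for $\Lambda(V)$ makes sense (the set over which the max is taken is non-empty and bounded above), and second show that the resulting assignment $V\mapsto X^{\Lambda(V)}$ is genuinely a projection, i.e.\ that it restricts to the identity on the subset $\{X^k,\ k\in\mathbb Z\}$.

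First I would check well-definedness. The key point is that $x$ is \emph{rigid} (Proposition \ref{P:LoxodromicX}(ii)), so the normal form of $x^k$ is the $k$-fold concatenation of the normal form of $x$, and in particular $\sup(x^k)=5k$, $\inf(x^k)=0$ for $k\geqslant 0$. For the set $\{k\in\mathbb Z:\ x\not\preccurlyeq\underline{x^k\cdot V}\}$ to be \emph{non-empty}, I would argue that for $k$ very negative, $x^k\cdot V$ has a distinguished representative of small supremum: writing $\underline V$ in left normal form with $\ell(\underline V)=r$, the distinguished representative of $x^k\cdot V$ is obtained from $x^k\underline V$ by collecting the negative part, and its canonical length is at most $r$ (it cannot exceed the length of $\underline V$ since left-multiplication by the positive element $x^{-k}=\Delta^{-5k}\cdot(\text{something})$\dots) — more cleanly, since $x$ has infimum $0$ and supremum $5$, for $m\gg 0$ one has $\inf(x^{-m}\underline V)\leqslant -1$ and hence the distinguished representative of $x^{-m}\cdot V$ has supremum bounded independently of $m$, so it cannot be left-divisible by $x$ (which has supremum $5>0$) once that bound drops below $5$; thus $x\not\preccurlyeq\underline{x^{-m}\cdot V}$ for all large $m$. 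For \emph{boundedness above}, I would show that the set is an \emph{up-set}-complement: if $x\preccurlyeq\underline{x^k\cdot V}$ then $x\preccurlyeq\underline{x^{k'}\cdot V}$ for all $k'\geqslant k$. This is the monotonicity statement and it is the technical heart of the lemma: one uses that $x=\Delta^0 x$ is rigid and that $\underline{x^{k+1}\cdot V}$ is obtained from $x\cdot\underline{x^k\cdot V}$ by extracting the negative-positive normal form; rigidity of $x$ guarantees that the initial factor $r_0$ of $x$ survives (is not absorbed into a power of $\Delta$) so that $x\preccurlyeq x\cdot\underline{x^k\cdot V}$ implies $x^2\preccurlyeq x\cdot\underline{x^k\cdot V}$ after one checks $x\preccurlyeq x$ trivially and pushes the divisibility through. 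Concretely, if $x\preccurlyeq\underline{x^k\cdot V}$, write $\underline{x^k\cdot V}=x\cdot h$ with $h\in M_w^C$; then $x\cdot\underline{x^k\cdot V}=x^2 h$, and since $x^2$ is rigid with $\inf(x^2)=0$ its normal form is unchanged by passing to the distinguished representative of $x^{k+1}\cdot V$ provided that representative still has infimum $0$, which holds because $x$ and $x^2$ have infimum $0$. Hence $x\preccurlyeq x^2\preccurlyeq\underline{x^{k+1}\cdot V}$.

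Second I would verify the projection property. For $V=X^j=x^j\cdot\ast$ we have $\underline{x^k\cdot X^j}=\underline{x^{k+j}\cdot\ast}=\underline{x^{k+j}}$, which by rigidity equals $x^{k+j}$ itself when $k+j\geqslant 0$ (infimum $0$, supremum $5(k+j)$) and equals $\partial(x^{-(k+j)})\Delta^{\text{something}}$-type expression with infimum\,$<0$ when $k+j<0$; using Proposition \ref{P:LoxodromicX}(iii) (that $x$ and $\partial(x)$ are not absorbable and contain a factor which is the right complement of a reflection) one checks $x\not\preccurlyeq\underline{x^{k}\cdot X^j}$ precisely when $k+j\leqslant 0$, i.e.\ $k\leqslant -j$. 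Therefore $\Lambda(X^j)=-\max\{k:\ k\leqslant -j\}=-(-j)=j$, so $X^{\Lambda(X^j)}=X^j$, as required.

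The main obstacle will be the monotonicity step — proving that $\{k:\ x\not\preccurlyeq\underline{x^k\cdot V}\}$ is bounded above. The subtlety is that the distinguished representative is defined by normalizing away powers of $\Delta$, so multiplying by $x$ on the left does not literally prepend a copy of $x$ to the left normal form of $\underline{x^k\cdot V}$; one must control how the negative part of $x\cdot\underline{x^k\cdot V}$ interacts with the already-present factors. Here rigidity of $x$ (Proposition \ref{P:LoxodromicX}(ii)) and the non-absorbability of $x$ and $\partial(x)$ (Proposition \ref{P:LoxodromicX}(iii)) together with Proposition \ref{P:Preferred} and Theorem \ref{T:CAL} are exactly the tools designed to handle this, exactly as in \cite[Definition 3 and the surrounding lemmas]{CalvezWiestAH}; I would import that argument verbatim, the only thing to check being that nothing in it used \emph{finiteness} of the set of simple elements, which it does not.
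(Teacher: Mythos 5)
Your overall division of labour (well-definedness of the max, then $\Lambda(X^j)=j$) is the right one, and your verification of the projection property is essentially fine. But both of your arguments for well-definedness have genuine gaps. For non-emptiness you claim that $\underline{x^{-m}\cdot V}$ has supremum bounded independently of $m$; this is false. Already for $V=\ast$ one has $\underline{x^{-m}\cdot \ast}=x^{-m}\Delta^{5m}=\partial(x^m)$, whose canonical length is $5m$ (Proposition \ref{P:LoxodromicX}(iv)): passing to the distinguished representative multiplies the ``mostly negative'' element $x^{-m}\underline V$ by a large power of $\Delta$, producing a long positive word, not a short one. So the inference ``small supremum, hence not divisible by $x$'' has no basis. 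For boundedness above, your monotonicity step rests on the claim that $\inf\bigl(x\cdot\underline{x^k\cdot V}\bigr)=0$ ``because $x$ and $x^2$ have infimum $0$''; this is not a valid inference, since the infimum of a product can strictly exceed the sum of the infima of the factors --- controlling exactly this possible jump is where rigidity and non-absorbability would actually have to be used, and you acknowledge yourself that this is the main obstacle before deferring to \cite{CalvezWiestAH}. (That deferral is in effect what the paper does: the lemma is quoted from \cite[Definition 3]{CalvezWiestAH} with no new proof, the needed inputs being supplied by Proposition \ref{P:LoxodromicX}.)

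Both gaps can be closed by an elementary observation that avoids monotonicity altogether. Set $i_k=\inf(x^k\underline V)$ for $k\in\mathbb Z$. Since $\underline{x^k\cdot V}=x^k\underline V\Delta^{-i_k}$, one has $x\preccurlyeq\underline{x^k\cdot V}$ if and only if $i_{k-1}\geqslant i_k$, that is (as $i_{k-1}\leqslant i_k$ always holds) if and only if $i_{k-1}=i_k$. The sequence $(i_k)$ is non-decreasing, satisfies $i_k\leqslant\inf(x^k)+\sup(\underline V)=\sup(\underline V)$ for $k\geqslant0$, and $i_k\leqslant -5|k|+\sup(\underline V)$ for $k\leqslant0$ (using $\inf(x^k)=0$ and $\sup(x^k)=5k$, which follow from rigidity). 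Hence it is eventually constant as $k\to+\infty$, so $x\preccurlyeq\underline{x^k\cdot V}$ for all sufficiently large $k$, which bounds the set in the max from above; and it tends to $-\infty$ as $k\to-\infty$, so it strictly increases somewhere, which gives some $k$ with $x\not\preccurlyeq\underline{x^k\cdot V}$ and hence non-emptiness. With this in place your computation of $\Lambda(X^j)=j$ goes through: you only need $x\not\preccurlyeq 1$ and $x\preccurlyeq x^{k+j}$ for $k+j\geqslant1$, while the cases $k+j<0$ (where you invoke non-absorbability) do not affect the maximum at all.
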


This projection has the following key-property:
\begin{proposition}\label{P:Contracting} {\rm{See \cite[Proposition 4]{CalvezWiestAH}.}} \label{P:Contract}
Let $V_1,V_2$ be two vertices of $\mathcal C_{AL}(\mathfrak C)$; let $\Lambda_1=\Lambda(V_1)$ and $\Lambda_2=\Lambda(V_2)$. Suppose that $\Lambda_2-\Lambda_1\geqslant 3$. Then the preferred path $\mathcal A(V_1,V_2)$ contains the subpath $\mathcal A(X^{\Lambda_1+1},X^{\Lambda_2-1})$. 
\end{proposition}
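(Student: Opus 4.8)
The statement is quoted verbatim from \cite[Proposition 4]{CalvezWiestAH}, so the plan is to transfer that proof line by line, the point being that its only \emph{element-specific} inputs are the properties of $x$ assembled in Proposition \ref{P:LoxodromicX}; everything else (the graph $\mathcal C_{AL}(\mathfrak C)$, the preferred paths, Proposition \ref{P:Preferred} and Theorem \ref{T:CAL}) is available in the present quasi-Garside setting. I would first record the reusable facts. From the formula in Lemma \ref{L:Projection} one reads off directly the $x$-equivariance $\Lambda(x\cdot V)=\Lambda(V)+1$, hence $\Lambda(X^k)=k$. Combining this with the equivariance of preferred paths (Proposition \ref{P:Preferred}(iii)) and translating by a suitable power of $x$, I reduce to the case $\Lambda_1=\Lambda(V_1)=0$ and $\Lambda_2=\Lambda(V_2)=d\geqslant 3$, where the goal becomes $\mathcal A(V_1,V_2)\supseteq\mathcal A(X^1,X^{d-1})$. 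I also record the structure of the axis path: by rigidity (Proposition \ref{P:LoxodromicX}(ii)) the left normal form of $\underline{X^k}=x^k$ is $k$ concatenated copies of the five-factor normal form of $x$, so $\mathcal A(\ast,X^k)$ visits $X^0,\dots,X^k$ and between consecutive ones coincides with a translate of $\mathcal A(\ast,X^1)$.

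The engine of the proof is a divisibility characterization of the projection, together with its monotonicity: for a fixed vertex $V$ the set $\{k:\ x\not\preccurlyeq\underline{x^k\cdot V}\}$ is a down-closed interval $(-\infty,-\Lambda(V)]$, and $\Lambda(V)\geqslant m$ is equivalent to $x^m\preccurlyeq\underline V$ \emph{with the left normal form of $\underline V$ beginning with $m$ copies of the normal form of $x$}. The nontrivial content — and what I expect to be the \textbf{main obstacle} — is to upgrade a bare divisibility $x\preccurlyeq\underline{x^k\cdot V}$ to this stronger ``normal-form prefix'' statement, so that dividing by $x$ drops the canonical length by exactly $5$, fixes the infimum at $0$, and forces $\underline{x^{-1}\cdot(x^k\cdot V)}=x^{-1}\,\underline{x^k\cdot V}$. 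This is exactly where the hypotheses on $x$ are consumed: the weightedness established in Lemmas \ref{L:Loxod1}, \ref{L:Loxod2} and \ref{L:Loxod4} keeps the five factors of $x$ in normal form against whatever follows them, while the non-absorbability of $x$ and of $\partial(x)$ (Proposition \ref{P:LoxodromicX}(iii)), fed through Lemma \ref{L:AbsorbableSubword}, forbids both the first factor of $\underline{x^k\cdot V}$ strictly swallowing the initial reflection $r_0$ of $x$ and an absorbable element collapsing the block into a shortcut. For the downward range $k<0$ the same bookkeeping is run on the complements, using the explicit normal form $\partial(x^m)=\Pi_{i=0}^{m-1}\tau^{5i}(\partial(x))$ from Proposition \ref{P:LoxodromicX}(iv). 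Granting this, iteration yields: if $\Lambda(V)=m\geqslant 1$ then $\mathcal A(\ast,V)$ hugs the axis up to $X^m$, i.e.\ its initial segment equals $\mathcal A(\ast,X^m)$.

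It remains to assemble this for a path between two non-basepoint vertices. Here I use the meet decomposition $\mathcal A(V_1,V_2)=\mathcal A(V_1,M)\cdot\mathcal A(M,V_2)$ with $M=(\underline{V_1}\wedge\underline{V_2})\Delta^{\mathbb Z}$ (Proposition \ref{P:Preferred}(i)) and the symmetry of preferred paths (Proposition \ref{P:Preferred}(ii)). Since $\Lambda(V_1)=0$ we have $x\not\preccurlyeq\underline{V_1}$, hence $x\not\preccurlyeq(\underline{V_1}\wedge\underline{V_2})$, so $M$ sits strictly below level $1$; since $\Lambda(V_2)=d$ the path $\mathcal A(M,V_2)$ inherits the axis-hugging of $\mathcal A(\ast,V_2)$ and therefore traverses $X^1,X^2,\dots,X^d$ consecutively. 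Reading off the common subpath gives $\mathcal A(V_1,V_2)\supseteq\mathcal A(X^1,X^{d-1})$, as required. The buffer $\Lambda_1+1$ and $\Lambda_2-1$ (rather than $\Lambda_1$ and $\Lambda_2$) is precisely the cost of the two transitions at the non-axis endpoints $V_1,V_2$, where the path is only guaranteed to have reached, respectively not yet left, the axis one level in; this is the same loss as in \cite[Proposition 4]{CalvezWiestAH}. The only place the argument could genuinely fail in the infinite-type setting is the non-collapse step of the second paragraph, and this is exactly what Proposition \ref{P:LoxodromicX} was designed to supply, so the transfer goes through.
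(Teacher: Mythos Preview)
Your proposal is correct and follows essentially the same route as the paper: the paper does not re-prove Proposition \ref{P:Contract} in detail but defers to \cite[Lemmas 5,6,7 and Proposition 4]{CalvezWiestAH}, flagging exactly one modification needed in the quasi-Garside setting---namely that $\Delta^{\sup(x)}$ is not central, so ``$k$ copies of the normal form of $\partial x$'' must be replaced by ``the normal form of $\partial(x^k)$''. You identify this same modification via Proposition \ref{P:LoxodromicX}(iv) and correctly isolate the normal-form prefix upgrade as the place where the element-specific properties of $x$ (rigidity, weightedness, non-absorbability of $x$ and of $\partial(x)$) are consumed; this is precisely the content of the cited lemmas.
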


The technical ground for proving 
Proposition \ref{P:Contract} is achieved in \cite[Lemmas 5,6,7]{CalvezWiestAH}. The proof of these lemmas is unchanged in our context, with the exception that 
$\Delta^{\sup(x)}$ is not central, so $\partial(x^k)\neq \partial(x)^k$ for $k\in\mathbb N$.  As a consequence, the expression ``$k$ copies of the normal form of $\partial x$'' in \cite{CalvezWiestAH} must be replaced by ``the normal form of $\partial (x^k)$''. 

{\it{Proof of Theorem \ref{T:Loxodromic}.}} See the proof of \cite[Proposition 5]{CalvezWiestAH}.\hfill $\Box$

\subsection{WPD}
\begin{theorem}\label{T:WPD}
The action of $x\in A<\mathfrak C$ on $\mathcal C_{AL}(\mathfrak C)$ is WPD, that is, for each vertex $V$ of $\mathcal C_{AL}(\mathfrak C)$ and for each $\kappa>0$, there exists an integer $N$ such that the set
 $$\{g\in \mathfrak C, \ d_{AL}(V,g\cdot V)\leqslant \kappa, \ d_{AL}(x^N\cdot V, gx^N\cdot V)\leqslant \kappa\}$$ is finite.

Equivalently, for each $\kappa>0$, there exists an integer $N$ such that the set 
$$\{g\in \mathfrak C, \ d_{AL}(\ast,g\cdot \ast)\leqslant \kappa, \ d_{AL}(X^N, g\cdot X^N)\leqslant \kappa\}$$ is finite.  
\end{theorem}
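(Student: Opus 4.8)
The plan is to follow the strategy of \cite{CalvezWiestAH}, using the projection $\Lambda$ of Lemma~\ref{L:Projection} and its contracting property (Proposition~\ref{P:Contract}) to turn the two distance hypotheses $d_{AL}(\ast, g\cdot\ast)\leqslant\kappa$ and $d_{AL}(X^N, g\cdot X^N)\leqslant\kappa$ into rigid combinatorial constraints on the normal form of $g$. First I would record the elementary fact that one edge of $\mathcal C_{AL}(\mathfrak C)$ changes $\Lambda$ by at most some fixed bounded amount (indeed, moving along an edge either appends a simple element or an absorbable element to the distinguished representative, and by Proposition~\ref{P:LoxodromicX}(iii) absorbable elements cannot carry the whole of $x$, so $|\Lambda(V)-\Lambda(V')|$ is bounded by a constant whenever $V,V'$ are adjacent); hence $d_{AL}(V,V')\leqslant\kappa$ forces $|\Lambda(V)-\Lambda(V')|\leqslant c\kappa$ for a universal $c$. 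Choosing $N$ much larger than $c\kappa$ (say $N\geqslant 2c\kappa+10$), the conditions $d_{AL}(\ast,g\cdot\ast)\leqslant\kappa$ and $d_{AL}(X^N,g\cdot X^N)\leqslant\kappa$ will then pin $\Lambda(g\cdot\ast)$ near $0$ and $\Lambda(g\cdot X^N)=\Lambda(gx^N\cdot\ast)$ near $N$, so that in particular $\Lambda(gx^N\cdot\ast)-\Lambda(g\cdot\ast)$ is large and positive.

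Next I would use Proposition~\ref{P:Contract}: applied to the two vertices $V_1=g\cdot\ast$ and $V_2=g\cdot X^N$, whose $\Lambda$-values differ by roughly $N$, the preferred path $\mathcal A(V_1,V_2)$ contains the long central subpath $\mathcal A(X^{\Lambda_1+1}, X^{\Lambda_2-1})$ running along the axis $\{X^k\}$. By equivariance of preferred paths (Proposition~\ref{P:Preferred}(iii)), $\mathcal A(V_1,V_2)=g\cdot\mathcal A(\ast, X^N)$, and $\mathcal A(\ast,X^N)$ is itself (by rigidity of $x$, Proposition~\ref{P:LoxodromicX}(ii), together with the structure of preferred paths and the normal form of $\partial(x^m)$ from Proposition~\ref{P:LoxodromicX}(iv)) an explicit path passing through all the vertices $X^0,\dots,X^N$ with controlled ``excursions''. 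Comparing the two descriptions of the same path forces $g$ to send a long segment of the axis $\{X^k\}$ to another long segment of the axis, in an order-preserving and hence translation-like way: there must be an integer $j$ (close to $\Lambda(g\cdot\ast)$, hence in a bounded range) with $g\cdot X^k = X^{k+j}$ for all $k$ in a long subinterval of $[0,N]$. This is the heart of the argument; from here $g^{-1}x^j$ fixes many consecutive vertices $X^k$, i.e.\ (passing to distinguished representatives of infimum $0$) $g^{-1}x^j$ lies in a bounded region of $\mathfrak C$ measured by canonical length and infimum.

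Finally I would convert the conclusion ``$g^{-1}x^j$ fixes a long axis segment'' into finiteness. Fixing $X^k$ for all $k$ in, say, $[k_0,k_0+L]$ with $L$ large means $(g^{-1}x^j)\cdot x^{k}\cdot\ast = x^{k}\cdot\ast$, i.e.\ $x^{-k}g^{-1}x^{j+k}\in\Delta^{\mathbb Z}$ for each such $k$; taking the difference of two consecutive values shows $x^{-k}g^{-1}x^{j}x^{k}$ is constant modulo $\Delta^{\mathbb Z}$ over a long range of $k$, which combined with Proposition~\ref{P:LoxodromicX}(v) (no non-trivial power of $\Delta$ commutes with $x$) forces $g^{-1}x^j$ to commute with $x$ modulo the cyclic centralizer direction, and then in fact $g^{-1}x^j=\Delta^{m}$ for some bounded $m$ — equivalently $g = x^j\Delta^{-m}$ with $j$ in a bounded set and $m$ in a bounded set. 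Hence the set in question is finite. The main obstacle I anticipate is the middle step: making rigorous the passage from ``the two preferred paths coincide on a long central subpath'' to ``$g$ acts as the translation $X^k\mapsto X^{k+j}$ on a long axis segment'', since this requires knowing that the only vertices on the preferred path $\mathcal A(\ast,X^N)$ whose $\Lambda$-value equals a given $k$ and which lie ``on the axis'' are the $X^k$ themselves, with bounded-size detours elsewhere — this is exactly the content that is extracted from \cite[Lemmas 5,6,7]{CalvezWiestAH} in the present setting, using rigidity of $x$, the non-absorbability of $x$ and $\partial(x)$, and the explicit normal form of $\partial(x^m)$ recorded in Proposition~\ref{P:LoxodromicX}. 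Once that is in place, the acylindrical-style accounting is routine, and Theorem~A then follows by Osin's criterion since $A$, being an irreducible centerless euclidean Artin–Tits group, is not virtually cyclic.
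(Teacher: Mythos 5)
Your outline coincides with the paper's own proof: the projection $\Lambda$ of Lemma \ref{L:Projection}, the coarse Lipschitz control, the contracting property of Proposition \ref{P:Contract} applied to $g\cdot\ast$ and $g\cdot X^N$, equivariance of preferred paths, and the conclusion that $g$ lies in a finite set of powers of $x$. However, there is a genuine gap exactly at the point you yourself call the heart. You assert that the coincidence of $g\cdot\mathcal A(\ast,X^N)$ with the long axis subpath $\mathcal A(X^{\Lambda_1+1},X^{\Lambda_2-1})$ forces $g\cdot X^k=X^{k+j}$ on a long subinterval, and you defer the justification to \cite[Lemmas 5, 6, 7]{CalvezWiestAH}. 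That attribution is wrong: those lemmas are the technical input for Proposition \ref{P:Contract} itself, which you have already used, and they do not exclude the possibility that the vertex $P$ of $\mathcal A(\ast,X^N)$ with $g\cdot P=X^{\Lambda_1+1}$ is an intermediate vertex $(x^a x_1\cdots x_j)\Delta^{\mathbb Z}$ with $1\leqslant j\leqslant 4$ rather than a power of $x$. The paper's Claim \ref{Claim} rules this out by a separate argument tailored to the design of $x$: from $g\cdot P=X^{\Lambda_1+1}$ and $g\cdot Q=X^{\Lambda_2-1}$ one deduces that $\underline{P}^{-1}\underline{Q}=x_{j+1}\cdots x_5\, x^{\Lambda_2-\Lambda_1-3}\, x_1\cdots x_j$ and $x^{\Lambda_2-\Lambda_1-2}$ are conjugate by a power of $\Delta$; since conjugation by $\Delta$ preserves normal forms and commutes with the weight function $\rho$ of Remark \ref{R:Weight}, the first and last normal-form factors must have matching weights, which is incompatible with $1\leqslant j\leqslant 4$ because the outer factors of $x$ are the single reflection $r_0$ (weight $1$) while the inner factors are heavier. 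Nothing in your sketch performs this step, and no bookkeeping of $\Lambda$-values or ``bounded detours'' substitutes for it.

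Two further points. First, the endgame: once two consecutive coincidences $g\cdot X^a=X^{b}$ and $g\cdot X^{a+1}=X^{b+1}$ are known, the paper's Claim \ref{C:Power} concludes $g=x^{b-a}$ exactly, via $gx^a=x^b\Delta^{l}$, $gx^{a+1}=x^{b+1}\Delta^{l'}$, hence $\Delta^l x=x\Delta^{l'}$, then $l=l'$ by comparing infima and $l=0$ by Proposition \ref{P:LoxodromicX}(v). Your weaker conclusion ``$g=x^j\Delta^{-m}$ with $m$ bounded'' is unjustified: the two distance hypotheses do not by themselves bound $m$ (the graph is locally infinite and $\tau$ need not have finite order in this setting), and in fact the $\Delta$-factor can and should be eliminated outright. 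Second, your opening claim that a single edge of $\mathcal C_{AL}(\mathfrak C)$ changes $\Lambda$ by a uniformly bounded amount is not established by remarking that absorbable elements ``cannot carry the whole of $x$''; the paper instead imports the coarse Lipschitz inequality wholesale as Lemma \ref{L:Lipschitz} from \cite[Proposition 7]{CalvezWiestAH}, and you should rely on that statement (or prove your edge claim, which again needs the technical lemmas rather than Proposition \ref{P:LoxodromicX}(iii) alone).
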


The proof follows very closely the proof of \cite[Proposition 6]{CalvezWiestAH}. 
The first step is to see that the projection $\Lambda$ defined in Lemma \ref{L:Projection} is coarsely Lipschitz. In what follows, $K$ is the maximal Hausdorff distance between a geodesic and a preferred path between a pair of vertices of $\mathcal C_{AL}(\mathfrak C)$; one can take $K=39$ --see Theorem~\ref{T:CAL}. 
\begin{lemma}\label{L:Lipschitz}{\rm{\cite[Proposition 7]{CalvezWiestAH}.}}
Suppose that $V_1,V_2$ are vertices of $\mathcal C_{AL}(\mathfrak C)$. 
Then 
$$|\Lambda(V_2)-\Lambda(V_1)|\leqslant 2(d_{AL}(V_1, V_2) + 2K + 1).$$ 
\end{lemma}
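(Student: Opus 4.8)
\textbf{Proof plan for Lemma \ref{L:Lipschitz}.}

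The plan is to reduce the general case to the case where $V_1$ and $V_2$ are adjacent in $\mathcal C_{AL}(\mathfrak C)$, and then to control how much $\Lambda$ can jump across a single edge by feeding the edge into Proposition \ref{P:Contract} (the contracting property of the projection). First I would observe that it suffices to prove the bound $|\Lambda(V_2)-\Lambda(V_1)|\leqslant 2(2K+1)+2$ when $d_{AL}(V_1,V_2)=1$: indeed, if $d_{AL}(V_1,V_2)=d$, pick a geodesic $V_1=W_0,W_1,\ldots,W_d=V_2$ and sum the single-edge estimates, obtaining $|\Lambda(V_2)-\Lambda(V_1)|\leqslant d\cdot(2(2K+1)+2)$, which is dominated by $2(d+2K+1)$ once one is slightly careful with the constants (this is exactly the shape of the inequality in the statement, so the single-edge constant one really needs is $\le 2$, not $2(2K+1)+2$; see below).

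So the heart of the matter is: if $V_1$ and $V_2$ are joined by an edge, then $|\Lambda(V_1)-\Lambda(V_2)|\le 2$. Suppose for contradiction that $\Lambda(V_2)-\Lambda(V_1)\geqslant 3$ (the other sign being symmetric). By Proposition \ref{P:Contract}, the preferred path $\mathcal A(V_1,V_2)$ then contains the subpath $\mathcal A(X^{\Lambda_1+1},X^{\Lambda_2-1})$, whose length is at least $\Lambda_2-\Lambda_1-2\geqslant 1$ but in fact, using Theorem \ref{T:Loxodromic}, the distance $d_{AL}(X^{\Lambda_1+1},X^{\Lambda_2-1})\geqslant \tfrac{1}{2}(\Lambda_2-\Lambda_1-2)$. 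On the other hand, $\mathcal A(V_1,V_2)$ is a uniformly unparameterized quasi-geodesic staying within Hausdorff distance $K$ of a genuine geodesic between $V_1$ and $V_2$, which has length $d_{AL}(V_1,V_2)=1$; hence any subpath of it, and in particular $\mathcal A(X^{\Lambda_1+1},X^{\Lambda_2-1})$, has its endpoints at mutual distance at most $1+2K$. Combining, $\tfrac12(\Lambda_2-\Lambda_1-2)\le 1+2K$, i.e. $\Lambda_2-\Lambda_1\le 2(2K+1)+2$; running the geodesic-decomposition argument above with this constant and cleaning up gives the asserted bound. (This is precisely the argument of \cite[Proposition 7]{CalvezWiestAH}, transported verbatim to the quasi-Garside setting, the only change being the $\tfrac12$ coming from the explicit loxodromicity constant in Theorem \ref{T:Loxodromic}.)

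The step I expect to be the only real point requiring care is the interplay between the two inequalities in the previous paragraph and the exact numerical constant: one must check that Proposition \ref{P:Contract} genuinely produces the subpath $\mathcal A(X^{\Lambda_1+1},X^{\Lambda_2-1})$ \emph{inside} $\mathcal A(V_1,V_2)$ (so that its endpoints really are within Hausdorff distance $K$ of the short geodesic $V_1 V_2$), and that $\Lambda$ is genuinely well-defined on these vertices, which it is by Lemma \ref{L:Projection}. Everything else — the reduction to adjacent vertices, the summation over a geodesic, and the use of the $60$-hyperbolicity constant $K=39$ from Theorem \ref{T:CAL} — is routine bookkeeping and identical to \cite{CalvezWiestAH}; no new ingredient from the crystallographic Garside structure is needed here, since $\Lambda$, the preferred paths, and their quasi-geodesic property have all already been set up in the required generality.
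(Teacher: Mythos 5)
Your core computation is the right one, but the reduction to single edges followed by summation along a geodesic is a genuine error and cannot produce the inequality as stated. Your single-edge argument only yields $|\Lambda(W_{i+1})-\Lambda(W_i)|\leqslant 2(2K+1)+2=4K+4$ (and nothing better: Proposition \ref{P:Contract} plus Theorem \ref{T:Loxodromic} do not give a per-edge jump of at most $2$, since the preferred path between the endpoints of an edge only $K$-fellow-travels that edge, so $\Lambda$ can a priori jump by roughly $4K$ across one edge). Summing over a geodesic of length $d$ therefore gives only $|\Lambda(V_2)-\Lambda(V_1)|\leqslant d(4K+4)$, in which the hyperbolicity constant is multiplied by $d$ instead of added; for $d\geqslant 2$ this is not dominated by $2(d+2K+1)$, and no ``cleaning up of constants'' recovers the additive form of the stated bound. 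Your parenthetical remark that the single-edge constant one ``really needs is $\leqslant 2$'' is exactly the symptom: the proposal never proves that, and it is not what the quoted tools give.

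The fix is simply to delete the reduction and run your two estimates once, directly on the given pair $(V_1,V_2)$ with $d=d_{AL}(V_1,V_2)$. If $|\Lambda(V_2)-\Lambda(V_1)|\leqslant 2$ the bound is trivial since the right-hand side is at least $2$. If, say, $\Lambda_2-\Lambda_1\geqslant 3$, then Proposition \ref{P:Contract} places $\mathcal A(X^{\Lambda_1+1},X^{\Lambda_2-1})$ inside $\mathcal A(V_1,V_2)$; by Theorem \ref{T:Loxodromic} and the isometry of the action (translate by $x^{-(\Lambda_1+1)}$), $d_{AL}(X^{\Lambda_1+1},X^{\Lambda_2-1})\geqslant \tfrac12(\Lambda_2-\Lambda_1-2)$; and since both these vertices lie on $\mathcal A(V_1,V_2)$, which is within Hausdorff distance $K$ of a geodesic of length $d$ (Theorem \ref{T:CAL}), they are at distance at most $d+2K$ from each other. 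Combining gives $\tfrac12(\Lambda_2-\Lambda_1-2)\leqslant d+2K$, i.e.\ $\Lambda_2-\Lambda_1\leqslant 2(d+2K+1)$ on the nose, with the other sign handled symmetrically. This direct argument is what \cite[Proposition 7]{CalvezWiestAH} does (the paper itself gives no in-text proof, only that citation), and your observation that all ingredients ($\Lambda$, preferred paths, the quasi-geodesic property) are already available in the crystallographic Garside setting is correct.
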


{\it{Proof of Theorem \ref{T:WPD}.}}
Fix any $\kappa>0$; let $\xi=\kappa+2K+1$ and fix $N\geqslant 4\xi+3$. Suppose that $g\in \mathfrak C$ satisfies

\begin{eqnarray}\label{E:WPD0}
\begin{cases} d_{AL}(\ast,g\cdot \ast)\leqslant \kappa,\\
d_{AL}(X^N, g\cdot X^N)\leqslant \kappa.
\end{cases}
\end{eqnarray}

\begin{claim}\label{CLAIM}
We have 
\begin{eqnarray}\label{Distance}
\Lambda(g\cdot X^N)-\Lambda(g\cdot \ast)\geqslant N-4\xi\geqslant 3. 
\end{eqnarray}
The preferred path between $g\cdot \ast$ and $g\cdot X^N$ contains the subpath $\mathcal A(X^{\Lambda(g\cdot \ast)+1},X^{\Lambda(g\cdot X^N)-1})$.
\end{claim}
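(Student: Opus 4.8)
The plan is to combine the coarse-Lipschitz estimate for $\Lambda$ (Lemma \ref{L:Lipschitz}) with the fact that $\Lambda$ is $\mathfrak C$-equivariant, and then invoke Proposition \ref{P:Contract}. First I would observe that the projection $\Lambda$ is equivariant in the following weak sense: for any $h\in\mathfrak C$ and any vertex $V$, one has $\Lambda(x\cdot V)=\Lambda(V)+1$ directly from the defining formula $\Lambda(V)=-\max\{k: x\not\preccurlyeq\underline{x^k\cdot V}\}$, since replacing $V$ by $x\cdot V$ shifts the exponent $k$ by $-1$. In particular $\Lambda(X^k)=\Lambda(x^k\cdot\ast)=k+\Lambda(\ast)$, and $\Lambda(\ast)=0$ because $x\not\preccurlyeq\Delta^{\sup(x)\cdot(-1)}\cdots$ — more precisely $x\not\preccurlyeq\underline{x^0\cdot\ast}=1$ while $x\preccurlyeq\underline{x^{-1}\cdot\ast}$ fails too; in any case the relevant quantity is the \emph{difference} of $\Lambda$-values, so the normalization is irrelevant.

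Next I would apply Lemma \ref{L:Lipschitz} twice. Since $d_{AL}(\ast,g\cdot\ast)\leqslant\kappa$, we get $|\Lambda(g\cdot\ast)-\Lambda(\ast)|\leqslant 2(\kappa+2K+1)=2\xi$, so $\Lambda(g\cdot\ast)\leqslant 2\xi$ (recall $\Lambda(\ast)=0$). Since $d_{AL}(X^N,g\cdot X^N)\leqslant\kappa$, we get $|\Lambda(g\cdot X^N)-\Lambda(X^N)|\leqslant 2\xi$, and $\Lambda(X^N)=N$, so $\Lambda(g\cdot X^N)\geqslant N-2\xi$. Subtracting,
$$\Lambda(g\cdot X^N)-\Lambda(g\cdot\ast)\geqslant (N-2\xi)-2\xi = N-4\xi.$$
Since $N\geqslant 4\xi+3$ by the choice of $N$, this is at least $3$, establishing \eqref{Distance}.

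Finally, having $\Lambda(g\cdot X^N)-\Lambda(g\cdot\ast)\geqslant 3$, I would apply Proposition \ref{P:Contract} with $V_1=g\cdot\ast$, $V_2=g\cdot X^N$, $\Lambda_1=\Lambda(g\cdot\ast)$, $\Lambda_2=\Lambda(g\cdot X^N)$: the hypothesis $\Lambda_2-\Lambda_1\geqslant 3$ is exactly what we just verified, and the conclusion is that $\mathcal A(g\cdot\ast,g\cdot X^N)$ contains the subpath $\mathcal A(X^{\Lambda_1+1},X^{\Lambda_2-1})=\mathcal A(X^{\Lambda(g\cdot\ast)+1},X^{\Lambda(g\cdot X^N)-1})$, which is the second assertion of the claim. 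I expect no real obstacle here: the only mild subtlety is making sure the equivariance computation $\Lambda(x\cdot V)=\Lambda(V)+1$ (hence $\Lambda(X^N)=N$ up to the irrelevant additive constant) is stated cleanly, and that the reader sees the choice $N\geqslant 4\xi+3$ was made precisely to force $N-4\xi\geqslant 3$; both are bookkeeping rather than mathematics.
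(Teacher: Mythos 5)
Your proposal is correct and follows essentially the same route as the paper: two applications of Lemma \ref{L:Lipschitz} together with the normalizations $\Lambda(\ast)=0$ and $\Lambda(X^N)=N$ give $\Lambda(g\cdot X^N)-\Lambda(g\cdot\ast)\geqslant N-4\xi\geqslant 3$, and then Proposition \ref{P:Contract} yields the subpath statement. Your extra remark on the shift property $\Lambda(x\cdot V)=\Lambda(V)+1$ is a harmless (and correct) way to justify the normalization, which the paper simply records as an observation.
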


\begin{proof}
Under our hypothesis, we have according to Lemma \ref{L:Lipschitz} (observe that $\Lambda(X^N)=N$ and $\Lambda(\ast)=0$), 
$$\begin{cases} 
|N-\Lambda(g\cdot X^N)| & \leqslant 2(\kappa+2K+1)=2\xi,\\
|\Lambda(g\cdot \ast)| & \leqslant 2(\kappa+2K+1)=2\xi;
\end{cases}$$
the first part of the claim follows. 
Due to our choice of $N$, the second part of the claim follows immediately from Proposition \ref{P:Contracting}.
\end{proof}

Write for short $\Lambda_1=\Lambda(g\cdot\ast )$ and $\Lambda_2=\Lambda(g\cdot X^N)$. By Proposition \ref{P:Preferred}(iii), the preferred path $\mathcal A(g\cdot \ast,g\cdot X^N)$ is the $g$ left translate of the preferred path $\mathcal A( \ast,X^N)$, so by Claim \ref{CLAIM}, there are vertices $P$ and~$Q$ along $\mathcal A(\ast,X^N)$ so that $g\cdot P=X^{\Lambda_1+1}$ and $g\cdot Q=X^{\Lambda_2-1}$. 

\begin{claim}\label{Claim}
$P$ is represented by some power of $x$, say $P=X^a$ (and thus $Q=X^{a+\Lambda_2-\Lambda_1-2}$). 
\end{claim}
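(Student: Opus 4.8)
The goal is to show that the vertex $P$ along the preferred path $\mathcal A(\ast,X^N)$ with $g\cdot P=X^{\Lambda_1+1}$ is in fact a vertex of the form $X^a$ for some $a\in\mathbb Z$. The idea is to exploit rigidity of $x$ (Proposition \ref{P:LoxodromicX}(ii),(iv)) together with the fact that both the normal form of $x$ \emph{and} of $\partial(x)$ contain a factor which is the right complement of a reflection, hence is \emph{not} absorbable (Proposition \ref{P:LoxodromicX}(iii)). First I would describe the preferred path $\mathcal A(\ast,X^N)$ explicitly: since $x$ is rigid, the distinguished representative of $X^N$ is $\partial(x^N)=\Pi_{i=0}^{N-1}\tau^{5i}(\partial(x))$ in left normal form (Proposition \ref{P:LoxodromicX}(iv)); hence by the definition of preferred paths, $\mathcal A(\ast,X^N)$ is the sequence of vertices $V_j=(\text{product of the first } j \text{ normal-form factors of } \partial(x^N))\,\Delta^{\mathbb Z}$, $j=0,\ldots,5N$. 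Note that the ``block boundaries'', those $V_j$ with $j$ a multiple of $5$, are precisely the vertices $X^{-k}$ for $k=0,\ldots,N$ (using that $x^k\cdot X^{-k}=\ast$ and that $x^N=x^k\cdot(\text{distinguished rep.\ of }X^{N-k})$, i.e.\ $\partial(x^N)$ truncated after $5k$ factors represents $X^{-k}$, up to a power of $\Delta$ that is absorbed in the coset). So it suffices to show that $P$, which a priori is one of the $V_j$, must be a block boundary.

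\textbf{Key steps.} The crux is that $P$ and $Q$ are related by $g$: we have $g\cdot P=X^{\Lambda_1+1}$ and $g\cdot Q=X^{\Lambda_2-1}$, and the subpath of $\mathcal A(\ast,X^N)$ from $P$ to $Q$ gets carried by $g$ onto $\mathcal A(X^{\Lambda_1+1},X^{\Lambda_2-1})=\mathcal A(\ast,X^{\Lambda_2-\Lambda_1-2})$ translated by $x^{\Lambda_1+1}$ (Proposition \ref{P:Preferred}(iii)). The latter preferred path is, by the same description as above, a path whose $5$-fold subdivision points are the $X^k$'s and whose edges are labelled by the normal-form factors $\tau^{5i}(\partial(x))$ of $\partial(x^{\Lambda_2-\Lambda_1-2})$ — a path with no absorbable edges at the right spots, by Proposition \ref{P:LoxodromicX}(iii). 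Now the preferred path $\mathcal A(\ast,X^N)$ from $P$ onwards also has a prescribed edge-labelling by the normal-form factors of $\partial(x^N)$, again a concatenation of copies of $\partial(x)$ up to the automorphism $\tau^{5i}$. The point is to compare the two labellings: because $g$ maps one preferred subpath isometrically (as an edge-path, preserving the preferred-path structure) onto the other, and because the edge-labels of a preferred path are intrinsic data recoverable from the path (the label of the edge from $V$ to $V'$ is $\underline{V}^{-1}\underline{V'}$ up to $\Delta$-powers, which is the $j$-th normal form factor), we get that the sequence of labels along $[P,Q]$ in $\mathcal A(\ast,X^N)$ equals the sequence of labels along $\mathcal A(\ast,X^{\Lambda_2-\Lambda_1-2})$, shifted. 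In both, consecutive groups of five factors form a copy of the normal form of $x$ (read as the five factors $\iota'_b\cdot w_0\cdot\iota'_g\cdot r_0\cdot$?, wait — the normal form of $x$ has factors $r_0,\iota'_b,w_0,\iota'_g,r_0$, five of them, with the last factor of one copy equal to the first of the next, which is what rigidity means). Since $x$ is rigid and $\partial(x^N)$ is literally the concatenation $\partial(x),\tau^5\partial(x),\ldots$, and since the same holds for $\mathcal A(\ast,X^{\Lambda_2-\Lambda_1-2})$, matching the two label sequences forces the starting index $j$ of $P$ (mod $5$) to be $0$: otherwise the ``phase'' of the periodic label pattern would be off, and — here is where non-absorbability of the reflection-complement factor is used — the factor that should be $\partial(r_0)$ (a right complement of a reflection, non-absorbable) would be forced to sit in a position where the other path has a different factor, or one would need an absorbable factor where there is none. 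More precisely: the factor of $\partial(x)$ that is a right complement of a reflection occupies a fixed residue mod $5$; if $P$ is not a block boundary, aligning the two paths puts this non-absorbable factor against a factor of $x$ (not of $\partial x$), but the normal-form factors of $x$ are $r_0,\iota'_b,w_0,\iota'_g,r_0$, none of which is the right complement of a reflection, while the factors of $\partial(x)$ include such a complement — contradiction with the label sequences being equal. Hence $5\mid j$, so $P=V_j=X^{-j/5}$, i.e.\ $P=X^a$ with $a=-j/5$. Then $Q=X^{a+\Lambda_2-\Lambda_1-2}$ follows since $g\cdot Q=X^{\Lambda_2-1}=X^{\Lambda_1+1}\cdot x^{\Lambda_2-\Lambda_1-2}$ and $g\cdot P=X^{\Lambda_1+1}$, so $Q=P\cdot x^{\Lambda_2-\Lambda_1-2}$ as vertices, giving the stated index.

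\textbf{Main obstacle.} The delicate point is the bookkeeping identifying the edge-labels of the preferred path $\mathcal A(\ast,X^N)$ with the normal-form factors of $\partial(x^N)$ and transporting this under $g$: one must be careful that $\Delta=w$ is not central, so $\partial(x^k)\neq\partial(x)^k$, and the automorphism $\tau^{5i}$ genuinely enters — as the excerpt itself flags, ``$k$ copies of the normal form of $\partial x$'' must be read as ``the normal form of $\partial(x^k)$''. But $\tau$ is an automorphism of $M_w^C$ fixing $w$ and permuting the atoms, and in particular it sends right complements of reflections to right complements of reflections and never turns a factor of $x$ into a factor of $\partial x$; so the residue-mod-$5$ argument above survives applying $\tau^{5i}$. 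I would also need the elementary fact, already implicit in the construction of preferred paths, that along $\mathcal A(\ast, X^N)$ the vertices at positions $j\equiv 0\pmod 5$ are exactly $X^0,X^{-1},\ldots,X^{-N}$; this is immediate from $x^{k}\cdot(\underline{X^{N-k}})=\underline{X^N}\cdot\Delta^{\ast}$ and rigidity, i.e.\ truncating $\partial(x^N)$ after $5k$ factors gives (a $\Delta$-coset representative of) $X^{-k}$. Granting these, the proof is a clean phase-matching argument, and I expect the write-up to be short.
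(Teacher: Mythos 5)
Your proposal has a genuine gap, and it starts with a misidentification of the preferred path $\mathcal A(\ast,X^N)$. Since $\inf(x^N)=0$ (rigidity, Proposition \ref{P:LoxodromicX}(ii)), the distinguished representative of $X^N$ is $x^N$ itself, not $\partial(x^N)$: the element $\partial(x^N)=x^{-N}\Delta^{\sup(x^N)}$ lies in the coset $x^{-N}\Delta^{\mathbb Z}$, so the path you describe (with block boundaries $X^{0},X^{-1},\ldots,X^{-N}$ and labels the factors of $\partial(x^N)$) is $\mathcal A(\ast,X^{-N})$, not the path containing $P$ and $Q$. Along the actual path $\mathcal A(\ast,X^N)$ the vertices at positions divisible by $5$ are $X^0,X^1,\ldots,X^N$ and the edge labels are the normal-form factors of $x^N$, i.e.\ repeated copies of $r_0,\iota'_b,w_0,\iota'_g,r_0$; the same holds for $\mathcal A(X^{\Lambda_1+1},X^{\Lambda_2-1})$, which is the $x^{\Lambda_1+1}$-translate of $\mathcal A(\ast,X^{\Lambda_2-\Lambda_1-2})$ with $\Lambda_2-\Lambda_1-2\geqslant 1$, and is therefore \emph{not} labelled by factors of $\partial(x^{\Lambda_2-\Lambda_1-2})$. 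This is fatal for your phase-matching criterion: you want a contradiction from a factor of $\partial(x)$ that is the right complement of a reflection being matched against a factor of $x$, but in the correct setup \emph{both} label sequences consist only of the factors $r_0,\iota'_b,w_0,\iota'_g,r_0$ (up to a global power of $\tau$), so no complement-of-a-reflection factor ever occurs and your invariant, together with Proposition \ref{P:LoxodromicX}(iii), detects nothing. (In the paper, non-absorbability is used for the projection/contraction and loxodromicity arguments, not for this claim.) A secondary, repairable issue is your assertion that the edge labels are ``intrinsic'' and carried by $g$ up to $\tau$: a priori the label of the image of an edge with label $s$ is $\tau^{-\inf(g\underline V)}(s)\Delta^{\inf(g\underline V)-\inf(g\underline Vs)}$, and the $\Delta$-discrepancy must be excluded, which requires an argument about infima.

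For comparison, the paper's proof handles both points with one computation. Assuming $\underline P=x^a x_1\cdots x_j$ with $1\leqslant j\leqslant 4$, rigidity gives the normal form $\underline P^{-1}\underline Q=x_{j+1}\cdots x_5\,x^{\Lambda_2-\Lambda_1-3}\,x_1\cdots x_j$; from $\underline{g\cdot P}=g\underline P\Delta^{-\inf(g\underline P)}$ and $\underline{g\cdot Q}=g\underline Q\Delta^{-\inf(g\underline Q)}$ one gets $x^{\Lambda_2-\Lambda_1-2}=\tau^{-\inf(g\underline P)}(\underline P^{-1}\underline Q)\Delta^{\inf(g\underline P)-\inf(g\underline Q)}$, the power of $\Delta$ is killed by comparing infima, and then the first and last normal-form factors of the two elements, which are conjugate by a power of $\Delta$, are compared via the $\tau$-invariant weight $\rho$: the outer factors of $x^{\Lambda_2-\Lambda_1-2}$ are $r_0$, of weight $1$, whereas a shifted phase forces an outer factor among $\iota'_b,w_0,\iota'_g$. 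So the discriminating invariant is the weight of the outer factors, not absorbability; if you want to salvage your outline, this is the ingredient to substitute, together with the corrected description of the path.
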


\begin{proof}
Write $x=x_1\ldots x_5$ for the normal form of $x$ (see Definition \ref{D:Loxod}) and recall that $x$ is rigid (Proposition \ref{P:LoxodromicX}(ii)). Notice that the distinguished representatives of the $(5N+1)$ vertices along the path $\mathcal A(\ast,X^N)$ are 
$$\ast, x_1,x_1x_2,x_1x_2x_3,x_1x_2x_3x_4,x, xx_1,\ldots, x^2,\ldots X^N.$$
We know that the path $\mathcal A(P,Q)$ has the same length as the path $\mathcal A(X^{\Lambda_1+1},X^{\Lambda_2-1})$, that is $5(\Lambda_2-\Lambda_1-2)$, so if $P=X^a$ then $Q=X^{a+\Lambda_2-\Lambda_1-2}$.

  Suppose, in contradiction with the claim, that $P=(x^a x_1\ldots x_{j})\Delta^{\mathbb Z}$ for some $a$ and $1\leqslant j \leqslant 4$. Then as the length of $\mathcal A(P,Q)$ is $5(\Lambda_2-\Lambda_1-2)$, $Q$ must be $(x^{a+\Lambda_2-\Lambda_1-2}x_1\ldots x_j)\Delta^{\mathbb Z}$.

  Therefore we have (notice that $\underline P$ is a prefix of $\underline Q$)
\begin{eqnarray}\label{E:WPD}
\underline P^{-1}\underline Q= x_{j+1}\ldots x_5x^{\Lambda_2-\Lambda_1-3}x_1\ldots x_{j}, 
\end{eqnarray}
and this is the left normal form, by construction of $x$. 

On the other hand, 
$$x^{\Lambda_1+1}=\underline{g\cdot P}=g\underline P \Delta^{-\inf(g\underline P)}$$ and
$$ x^{\Lambda_2-1}=\underline{g\cdot Q}=g\underline Q\Delta^{-\inf(g\underline Q)},$$
from which we deduce (recall that $\tau$ is conjugation by $\Delta$)
$$x^{\Lambda_2-\Lambda_1-2}=\Delta^{\inf(g\underline P)}\underline P^{-1}g^{-1}g\underline Q \Delta^{-\inf(g\underline Q)}=\tau^{-\inf(g\underline P)}({\underline P}^{-1}\underline Q)\Delta^{\inf(g\underline P)-\inf(g\underline Q)}.$$
Considerations on the infimum show that $\inf(g\underline P)=\inf(g\underline Q)$ and we see also that $x^{\Lambda_2-\Lambda_1-2}$ and  ${\underline P}^{-1}\underline Q$ are conjugate by $\Delta^{-\inf(g\underline P)}$. By \cite[Proposition 2.14]{McCS} (conjugation of normal forms by~$\Delta$), we see in particular that the first factor (last factor, respectively) of the normal form of $x^{\Lambda_2-\Lambda_1-2}$ and the first factor (last factor, respectively) of the normal form of ${\underline{P}}^{-1}\underline Q$ are conjugate by $\Delta^{-\inf(g\underline P)}$.

By construction of $x$, the first and last factor of the normal form of $x^{\Lambda_2-\Lambda_1-2}$ are $x_1$ and $x_5$ respectively. In view of Equation (\ref{E:WPD}) the first and last factor of the normal form of ${\underline{P}}^{-1}\underline Q$ are $x_{j+1}$ and $x_j$ respectively. As $\tau$ commutes with the weight function $\rho$, we obtain $\rho(x_{j+1})=\rho(x_1)=1$ and $\rho(x_{j})=\rho(x_5)=1$, by construction of $x$, which contradicts the choice of $j$. 
\end{proof}


\begin{claim}\label{C:Power}
Any element $g\in \mathfrak C$ which satisfies the conditions (\ref{E:WPD0}) is a power of $x$. 
\end{claim}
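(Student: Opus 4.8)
The plan is to build on Claim~\ref{Claim}, which already pins down $P=X^a$ and $Q=X^{a+\Lambda_2-\Lambda_1-2}$, and to deduce that $g$ itself must be a power of $x$. First I would record the consequences of $g\cdot P=X^{\Lambda_1+1}$ and $g\cdot Q=X^{\Lambda_2-1}$ in the concrete form
$$x^{\Lambda_1+1}=\underline{g\cdot X^a}=gx^a\Delta^{-\inf(gx^a)},\qquad x^{\Lambda_2-1}=\underline{g\cdot X^{a+\Lambda_2-\Lambda_1-2}}=gx^{a+\Lambda_2-\Lambda_1-2}\Delta^{-\inf(gx^{a+\Lambda_2-\Lambda_1-2})}.$$
Dividing one by the other (as in the proof of Claim~\ref{Claim}) and using that $x$ is rigid (Proposition~\ref{P:LoxodromicX}(ii)) so that $x^{\Lambda_2-\Lambda_1-2}$ has a long rigid normal form, I expect to extract that $\inf(gx^a)=\inf(gx^{a+\Lambda_2-\Lambda_1-2})$; call this common value $-p$. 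Then $gx^a\Delta^{p}=x^{\Lambda_1+1}$, i.e. $g=x^{\Lambda_1+1}\Delta^{-p}x^{-a}$. So $g$ is already expressed in terms of $x$ and a power of $\Delta$, and what remains is to show the power of $\Delta$ is trivial, equivalently that $g$ commutes appropriately or that $p=0$ once we also use $N$ is large.

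Next I would use the second displacement condition more carefully. Substituting $g=x^{\Lambda_1+1}\Delta^{-p}x^{-a}$ into $gx^{a+\Lambda_2-\Lambda_1-2}\Delta^{\,?}=x^{\Lambda_2-1}$ gives $x^{\Lambda_1+1}\Delta^{-p}x^{\Lambda_2-\Lambda_1-2}=x^{\Lambda_2-1}\Delta^{\inf(\cdots)}$. Moving the central-looking but \emph{non-central} $\Delta^{-p}$ across $x^{\Lambda_2-\Lambda_1-2}$ using $\Delta^{-p}y=\tau^{-p}(y)\Delta^{-p}$ and the formula for $\partial(x^m)$ in Proposition~\ref{P:LoxodromicX}(iv), together with rigidity of $x$, I expect to force $\Delta^{-p}$ to conjugate the normal form of $x^{\Lambda_2-\Lambda_1-2}$ to itself. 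Since the first factor of that normal form is $x_1=r_0\cdot(\text{rest})$ with $x_1$ a \emph{specific} simple element, and conjugation by $\Delta^{-p}$ acts as $\tau^{-p}$ on simple factors, this conjugacy forces $\tau^{-p}$ to fix $x_1$ (and all factors of the power). But $x_1$ involves the vertical reflection $r_0$, and Proposition~\ref{P:LoxodromicX}(v) (no non-trivial power of $\Delta$ commutes with $x$, via $r_0$ being moved by every $T_w^k$) shows $\tau^{p}(x_1)\neq x_1$ unless $p=0$. Hence $p=0$ and $g=x^{\Lambda_1+1}x^{-a}=x^{\Lambda_1+1-a}$, a power of $x$.

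I expect the main obstacle to be precisely the bookkeeping with the \emph{non-centrality} of $\Delta=w$: in the finite-type setting of \cite{CalvezWiestAH} the analogous argument uses that $\Delta^{\sup(x)}$ is central, and here one must replace ``$k$ copies of $\partial x$'' by ``the normal form of $\partial(x^k)$'' (as already flagged before the statement) and track the $\tau^{5i}$ twists from Proposition~\ref{P:LoxodromicX}(iv) throughout. The delicate point is to verify that the conjugation-by-$\Delta^{-p}$ constraint really does propagate to the individual normal-form factors — this is where \cite[Proposition 2.14]{McCS} on conjugation of normal forms by $\Delta$ is invoked, exactly as in Claim~\ref{Claim} — and then to convert ``$\tau^{-p}$ fixes a factor containing $r_0$'' into ``$p=0$'' using that the isometries $T_w^k(r_0)$, $k\in\mathbb Z$, are pairwise distinct. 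Once $p=0$ the conclusion is immediate, and combined with Theorem~\ref{T:Loxodromic} and Osin's criterion this will finish the proof that the set in \eqref{E:WPD0} is finite (indeed contained in a coset-controlled finite set of powers of $x$), and hence Theorem~A.

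\begin{proof}
By Claim~\ref{Claim}, $P=X^a$ for some $a\in\mathbb Z$ and $Q=X^{a+\Lambda_2-\Lambda_1-2}$. Since $g\cdot P=X^{\Lambda_1+1}$ and $g\cdot Q=X^{\Lambda_2-1}$, by definition of the action we have
$$x^{\Lambda_1+1}=gx^a\Delta^{-\inf(gx^a)},\qquad x^{\Lambda_2-1}=gx^{a+\Lambda_2-\Lambda_1-2}\Delta^{-\inf(gx^{a+\Lambda_2-\Lambda_1-2})}.$$
Hence $\underline{P}^{-1}\underline{Q}=x^{\Lambda_2-\Lambda_1-2}$ while, by the computation in the proof of Claim~\ref{Claim}, $x^{\Lambda_2-\Lambda_1-2}$ and $\underline{P}^{-1}\underline{Q}$ are conjugate by $\Delta^{-\inf(gx^a)}$ and $\inf(gx^a)=\inf(gx^{a+\Lambda_2-\Lambda_1-2})$; set $p=\inf(gx^a)$. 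From the first displayed equation, $g=x^{\Lambda_1+1}\Delta^{-p}x^{-a}$.

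It remains to show $p=0$. Substituting $g=x^{\Lambda_1+1}\Delta^{-p}x^{-a}$ into $gx^{a+\Lambda_2-\Lambda_1-2}\Delta^{p}=x^{\Lambda_2-1}$ yields
$$x^{\Lambda_1+1}\Delta^{-p}x^{\Lambda_2-\Lambda_1-2}\Delta^{p}=x^{\Lambda_2-1},$$
i.e. $\tau^{-p}(x^{\Lambda_2-\Lambda_1-2})=x^{\Lambda_2-\Lambda_1-2}$, using $\Delta^{-p}y\Delta^{p}=\tau^{-p}(y)$. Now $\Lambda_2-\Lambda_1-2\geqslant N-4\xi-2\geqslant 1$ by \eqref{Distance}, and $x$ is rigid (Proposition~\ref{P:LoxodromicX}(ii)), so the left normal form of $x^{\Lambda_2-\Lambda_1-2}$ is $m$ concatenated copies of $x_1\ldots x_5$ with $m=\Lambda_2-\Lambda_1-2\geqslant1$. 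By \cite[Proposition 2.14]{McCS}, $\tau^{-p}$ acts factorwise on normal forms, so $\tau^{-p}(x_1)=x_1$. But $x_1=r_0\iota'_b$ involves the vertical reflection $r_0$, and $\tau^{-p}(r_0)=\Delta^{-p}r_0\Delta^{p}$; as in the proof of Proposition~\ref{P:LoxodromicX}(v), if $p\neq0$ then $\Delta^{-p}r_0\Delta^{p}$ is a vertical reflection through a hyperplane distinct from that of $r_0$, because all the isometries $T_w^{k}(r_0)$, $k\in\mathbb Z$, are distinct (Definition~\ref{D:Translation}). This contradicts $\tau^{-p}(x_1)=x_1$. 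Therefore $p=0$ and $g=x^{\Lambda_1+1-a}$ is a power of $x$.
\end{proof}
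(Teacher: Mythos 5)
Your argument is correct, but it takes a genuinely different route from the paper. The paper exploits the fact that $\Lambda_2-\Lambda_1\geqslant 3$ to find two \emph{consecutive} power vertices $X^a$, $X^{a+1}$ on $\mathcal A(P,Q)$ sent by $g$ to $X^{b}$, $X^{b+1}$; writing $gx^a=x^b\Delta^{l}$ and $gx^{a+1}=x^{b+1}\Delta^{l'}$ and cancelling gives $\Delta^{l}x=x\Delta^{l'}$, whence $l=l'$ by infimum and $l=0$ directly from Proposition \ref{P:LoxodromicX}(v), so $g=x^{b-a}$. You instead use only the two endpoints $P=X^a$ and $Q=X^{a+m}$ (with $m=\Lambda_2-\Lambda_1-2\geqslant 1$), deduce that some power $\Delta^{p}$ commutes with $x^{m}$, and then must upgrade Proposition \ref{P:LoxodromicX}(v) from $x$ to $x^{m}$: you do this via rigidity of $x$ plus the fact that conjugation by $\Delta$ sends normal forms to normal forms factorwise (\cite[Proposition 2.14]{McCS}), reducing to $\tau^{\mp p}$ fixing the first factor. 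This is slightly longer but proves the marginally stronger statement that no non-trivial power of $\Delta$ commutes with any positive power of $x$, whereas the paper's choice of consecutive vertices lets it quote Proposition \ref{P:LoxodromicX}(v) verbatim and skip the normal-form bookkeeping. Two blemishes to fix: (1) the first factor of the normal form of $x$ is $x_1=r_0$, not $r_0\iota'_b$ (Definition \ref{D:Loxod} gives five factors $r_0,\iota'_b,w_0,\iota'_g,r_0$); as literally written, ``$\tau^{-p}(x_1)=x_1$ with $x_1=r_0\iota'_b$ contradicts $\tau^{-p}(r_0)\neq r_0$'' is a non sequitur, since fixing a product does not fix its factors --- but with the correct identification $x_1=r_0$ your final step becomes immediate (then, exactly as in the proof of Proposition \ref{P:LoxodromicX}(v), $\Delta^{p}$ commuting with $r_0$ forces $T_w^{p}(r_0)=r_0$, hence $p=0$). (2) Your sign conventions for $\tau$ and for the exponent of $\Delta$ in $g=x^{\Lambda_1+1}\Delta^{\pm p}x^{-a}$ are off relative to the paper's $\tau(s)=\Delta^{-1}s\Delta$, but since the conclusion is $p=0$ this is immaterial.
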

\begin{proof}
As $\Lambda_2-\Lambda_1\geqslant 3$, we have along the path $\mathcal A(P,Q)=\mathcal A(X^a, X^{a+\Lambda_2-\Lambda_1-2})$ {\it{at least two}} consecutive vertices $X^a$ and $X^{a+1}$ such that 
$$\begin{cases}
g\cdot X^a=X^{b},\\
g\cdot X^{a+1}=X^{b+1},
\end{cases}$$ 
(with $b=\Lambda_1+1$). 
 By the first equality, we obtain $gx^a=x^b\Delta^{l}$ for some $l\in \mathbb Z$. By the second equality, we obtain $gx^{a+1}=x^{b+1}\Delta^{l'}$ for some $l'\in \mathbb Z$. Combining both assertions yields 
 $x^b\Delta^l x= x^{b+1}\Delta^{l'}$, which is equivalent to $\Delta^l x=x\Delta^{l'}$. 
This forces $l=l'$ (by considering the infimum) and also $l=0$ since no non-trivial power of $\Delta$ commutes with~$x$ (Proposition \ref{P:LoxodromicX}(v)). We deduce that $g=x^{b-a}$ is a power of $x$ as desired. 
\end{proof}

By Lemma \ref{L:Lipschitz}, if $d_{AL}(\ast, x^{l}\cdot \ast)\leqslant \kappa$, we have $|l|\leqslant 2\xi$; hence Claim \ref{C:Power} shows that the elements of $\mathfrak C$ which satisfy the conditions (\ref{E:WPD0}) are contained in the \emph{finite} set $\{x^{l}, |l|\leqslant 2\xi\}$. 
This achieves the proof of Theorem \ref{T:WPD}. \hfill $\Box$\\

{\it{Proof of Theorem A.}} This follows from Theorem \ref{T:WPD} and Osin's \cite[Theorem 1.2]{Osin}. \hfill $\Box$\\

With the same line of arguments, we have also shown:

\begin{CorollaryB}
The crystallographic Garside group $\mathfrak C$ is acylindrically hyperbolic. 
\end{CorollaryB}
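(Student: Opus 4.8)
The plan is to observe that Corollary B follows from the very same three ingredients that establish Theorem A, once we notice that the loxodromic element $x$ was constructed as an element of the bigger group $\mathfrak C$ in the first place. Concretely, Osin's characterization (\cite[Theorem 1.2]{Osin}) says that a group is acylindrically hyperbolic as soon as it is not virtually cyclic and admits an isometric action on a hyperbolic space with a single element acting loxodromically and WPD. I would apply this directly to $\mathfrak C$ acting on $\mathcal C_{AL}(\mathfrak C)$.

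First I would recall that $\mathcal C_{AL}(\mathfrak C)$ is hyperbolic (Theorem \ref{T:CAL}, which holds for any Garside group, finite-type or not) and that $\mathfrak C$ acts on it isometrically by left translation on vertices (from the definition of the additional length graph). Next, the element $x$ of Definition \ref{D:Loxod} lies in $\mathfrak C$ (indeed it lies in the subgroup $A$, but a fortiori in $\mathfrak C$), and Theorem \ref{T:Loxodromic} shows that $x$ acts loxodromically on $\mathcal C_{AL}(\mathfrak C)$. Crucially, Theorem \ref{T:WPD} is stated for $g$ ranging over all of $\mathfrak C$ --- its conclusion is that the relevant ``almost-fixing'' sets are finite subsets of $\mathfrak C$ --- so the WPD property of $x$ holds with respect to the $\mathfrak C$-action, not merely the $A$-action. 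Hence all the dynamical hypotheses of Osin's theorem are met for $\mathfrak C$.

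The only remaining point is that $\mathfrak C$ is not virtually cyclic. This is where a small extra argument is needed: $\mathfrak C$ contains $A$ as a subgroup (\cite[Theorem C]{McCS} / the Garside embedding), and $A$ is an irreducible Artin-Tits group of euclidean type of rank $n+1 \geqslant 3$ (we have excluded $\widetilde A_n$ but even there the rank is at least $2$; in the non-$\widetilde A_n$ cases treated here $n\geqslant 2$). Such groups are not virtually cyclic --- for instance they surject onto $\mathbb Z^2$ via abelianization in the relevant cases, or more robustly they contain $\mathbb Z^2$ subgroups, or one can simply invoke that $A$ acts loxodromically and WPD on an unbounded hyperbolic space and already has infinite diameter orbits while not being virtually cyclic. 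Since a group containing a non-virtually-cyclic subgroup is itself non-virtually-cyclic, $\mathfrak C$ is not virtually cyclic.

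I expect no genuine obstacle here: the statement is essentially a restatement of the proof of Theorem A with the ambient group enlarged from $A$ to $\mathfrak C$. The one spot demanding care is making the non-virtually-cyclic assertion clean; the safest route is to cite that $A \leqslant \mathfrak C$ and that $A$ is not virtually cyclic (it is infinite and not virtually $\mathbb Z$, being a euclidean Artin-Tits group of rank $\geqslant 3$), together with the trivial observation that virtual cyclicity passes to subgroups. Assembling these, Osin's theorem yields that $\mathfrak C$ is acylindrically hyperbolic, which is exactly Corollary B.
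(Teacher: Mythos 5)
Your proposal is correct and follows essentially the same route as the paper, which deduces Corollary~B ``with the same line of arguments'' as Theorem~A: the graph $\mathcal C_{AL}(\mathfrak C)$ is hyperbolic, $\mathfrak C$ acts on it isometrically, and the element $x$ acts loxodromically and WPD with respect to the full $\mathfrak C$-action (Theorem~\ref{T:WPD} is indeed stated for $g$ ranging over $\mathfrak C$), so Osin's criterion applies. Your added remark that $\mathfrak C$ is not virtually cyclic because it contains the non-virtually-cyclic subgroup $A$ is a sensible way to make explicit a point the paper leaves implicit (just be careful that the abelianization argument does not work uniformly for all euclidean types, so the subgroup/$\mathbb Z^2$ justification is the safer one).
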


{\bf{Acknowledgements.}} A great part of this work was done during COVID lockdown; I am deeply indebted to extra-official support of PYC. The initial impetus for learning about euclidean Artin-Tits groups has its roots in conversations with Bruno Cisneros to whom I am very grateful. The present paper builds on joint work with Bert Wiest whom I wish to thank for useful discussions. Heartfelt thanks also go to Mar\'ia Cumplido for a careful reading of an earlier draft and many useful comments. 
The author was supported by FONDEYT Regular 1180335, MTM2016-76453-C2-1-P and FEDER. 

\end{document}